
\documentclass[12pt,a4paper,psamsfonts]{amsart}
\usepackage{amssymb,amscd,amsxtra,calc}
\usepackage{cmmib57}

\setlength{\topmargin}{0cm}
\setlength{\oddsidemargin}{0cm}
\setlength{\evensidemargin}{0cm}
\setlength{\marginparwidth}{0cm}
\setlength{\marginparsep}{0cm}

\setlength{\textheight}{\paperheight - 2in -35pt}
\setlength{\textwidth}{\paperwidth - 2in}
\setlength{\headheight}{12.5pt}
\setlength{\headsep}{25pt}
\setlength{\footskip}{30pt}

\pagestyle{headings}

\newtheorem{thm}{Theorem}[section]
\newtheorem{lemma}[thm]{Lemma}
\newtheorem{proposition}[thm]{Proposition}
\newtheorem{corollary}[thm]{Corollary}
\newtheorem{claim}[thm]{Claim}

\newtheorem{conjecture}[thm]{Conjecture}

\theoremstyle{definition}
\newtheorem{definition}[thm]{Definition}
\newtheorem{remark}[thm]{Remark}

\newtheorem{construction}[thm]{Construction}

\newcommand{\pr}{\mathbb{P}}

\newcommand{\Z}{\mathbb{Z}}
\newcommand{\Q}{\mathbb{Q}}
\newcommand{\R}{\mathbb{R}}
\newcommand{\C}{\mathbb{C}}

\newcommand{\Gm}{\mathbb{G}_{\rm{m}}}
\newcommand{\NC}{\operatorname{N}_1}

\newcommand{\NE}{\operatorname{NE}}

\newcommand{\Rat}{\operatorname{RatCurves}^n}

\newcommand{\Exc}{\operatorname{Exc}}
\newcommand{\Locus}{\operatorname{Locus}}
\newcommand{\ChLocus}{\operatorname{ChLocus}}
\newcommand{\rc}{\operatorname{rc}}
\newcommand{\Chow}{\operatorname{Chow}}

\newcommand{\GL}{\operatorname{GL}}
\newcommand{\PGL}{\operatorname{PGL}}
\newcommand{\codim}{\operatorname{codim}}

\newcommand{\etale}{\text{{\'e}t}}

\newcommand{\sO}{\mathcal{O}}

\newcommand{\sE}{\mathcal{E}}

\newcommand{\sL}{\mathcal{L}}

\newcommand{\sV}{\mathcal{V}}
\newcommand{\sW}{\mathcal{W}}

\newcommand{\M}{\operatorname{M}}
\newcommand{\GM}{\operatorname{GM}}
\newcommand{\AM}{\operatorname{AM}}
\newcommand{\AGM}{\operatorname{AGM}}

\title{Around the Mukai conjecture for Fano manifolds}
\author{Kento Fujita}

\begin{document}
\maketitle
\begin{abstract}{\noindent As a generalization of the Mukai conjecture, 
we conjecture that the Fano manifolds $X$ which satisfy 
the property $\rho_X(r_X-1)\geq\dim X-1$ have very special structure, 
where $\rho_X$ is the Picard number of $X$ and $r_X$ is the index of $X$. 
In this paper, we classify those $X$ with $\rho_X\leq 3$ or $\dim X\leq 5$. }
\end{abstract}

\section{Introduction}\label{introsection}

Let $X$ be a Fano manifold, that is, a smooth projective variety such that 
the anticanonical divisor is ample. 
In this paper, we study the relationship among the Picard number $\rho_X$, 
the \emph{index} $r_X$ and the  \emph{pseudoindex} $\iota_X$. 
The definitions of index and pseudoindex are as follows: 
\begin{eqnarray*}
r_X & := & \max\{r\in\Z_{>0}\,\, |\,\, -K_X\sim rL \text{ for some Cartier divisor }L\},\\
\iota_X & := & \min\{(-K_X\cdot C)\,\, |\,\, C \text{ is a rational curve on }X\}.
\end{eqnarray*}
Clearly, $\iota_X$ is divisible by $r_X$. In particular, we have $\iota_X\geq r_X$. 

The following conjecture due to Mukai \cite{mukaiconj} is one of the most famous 
conjecture towards the relationship between 
the Picard number and the index of a Fano manifold. 

\begin{conjecture}[Mukai conjecture]\label{mukai_conj}
We have $\rho_X(r_X-1)\leq\dim X$ and equality holds if and only if 
$X\simeq(\pr^{r_X-1})^{\rho_X}$. 
\end{conjecture}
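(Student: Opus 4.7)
The plan is to exploit the theory of extremal rays and unsplit families of minimal rational curves, together with the rationally chain-connected quotients they induce. For each extremal ray $R_i$ of $\overline{\NE}(X)$ ($i=1,\dots,\rho_X$) I would fix an unsplit minimal covering family $V_i$ of rational curves whose numerical class lies on $R_i$; by definition of $r_X$, the length $\ell(V_i)=-K_X\cdot C_i$ is a positive multiple of $r_X$ and in particular at least $r_X$. The Ionescu--Wiśniewski inequality (bend-and-break) then guarantees that the general fiber of the associated rationally chain-connected quotient $\pi_i\colon X\dashrightarrow Y_i$ has dimension at least $\ell(V_i)-1\geq r_X-1$, giving the inequality $\rho_X(r_X-1)\leq\dim X$ already in the case $\rho_X=1$.

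The next step is additivity: combine the $\rho_X$ families into a single quotient whose general fiber has dimension at least $\sum_i(\ell(V_i)-1)\geq\rho_X(r_X-1)$. Following Casagrande and Occhetta, the idea is to pass to the quotient by $V_1$, observe that the families $V_2,\dots,V_{\rho_X}$ descend (or can be replaced by descended minimal families) on $Y_1$, and to track Picard numbers through the process so that at each stage one kills exactly one extremal ray while gaining exactly $r_X-1$ in fiber dimension. A key technical lemma to establish is that the minimal families stay unsplit and numerically independent after descent; this uses that chains of rational curves in different numerical classes, joined at a point, cannot degenerate to a single class by bend-and-break.

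For the equality case $\rho_X(r_X-1)=\dim X$, each intermediate fiber must be forced to be exactly $\pr^{r_X-1}$: a Fano manifold of dimension $r_X-1$ and pseudoindex $\geq r_X$ is $\pr^{r_X-1}$ by the theorem of Cho--Miyaoka--Shepherd-Barron/Kebekus. Hence $\pi_1$ becomes an honest $\pr^{r_X-1}$-bundle $X\to Y_1$, with $Y_1$ a Fano manifold of Picard number $\rho_X-1$, index $r_X$, and again saturating the inequality. Induction on $\rho_X$ yields $Y_1\simeq(\pr^{r_X-1})^{\rho_X-1}$, and a rigidity step (Brauer-obstruction vanishing plus triviality of $\pr^{r_X-1}$-bundles that are equivariant for all $\rho_X-1$ projections to point) identifies $X$ with the full product.

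The main obstacle is the additivity step: this is exactly where the Mukai conjecture is open in general, since no argument is known that prevents the contributions of different minimal families from overlapping in fiber dimension when the extremal rays interact in complicated ways. This is precisely the reason the present paper restricts to $\rho_X\leq 3$ or $\dim X\leq 5$, where the number of families and the possible configurations of extremal contractions can be enumerated and controlled by hand; the general case would require a uniform combinatorial control over how families indexed by distinct extremal rays combine, which is presently out of reach.
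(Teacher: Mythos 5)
The statement you were asked to prove is Conjecture~\ref{mukai_conj}, which is an open conjecture: the paper does not prove it, and your closing paragraph is an accurate assessment of why. The paper only establishes a coarser cousin, Conjecture~$\AM^n_\rho$, and only under the extra hypotheses $\rho\le 3$ or $n\le 5$; there is no ``paper's own proof'' of the Mukai conjecture to compare against.

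That said, a few points in your sketch would not survive scrutiny even as heuristics, and it is worth noting them because they mark the places where the literature (and this paper) take a different route. First, you cannot in general attach to each extremal ray $R_i$ a dominating unsplit family whose class lies on $R_i$; minimal dominating families are chosen by degree, not by ray, and their numerical classes need not be extremal. The paper therefore works with families $V^1,\dots,V^k$ produced by Construction~\ref{const} (iterated minimal horizontal dominating families relative to successive $\rc$-fibrations), and only afterwards, in Section~\ref{domunsplit_section}, shows the existence of \emph{one} special extremal ray whose contraction has all fibers of dimension $\le\iota_X-1$. Second, a Fano manifold typically has strictly more than $\rho_X$ extremal rays, so indexing families by $i=1,\dots,\rho_X$ over the rays is not well-posed. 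Third, the ``additivity'' you want is precisely what Lemma~\ref{no_l1} and Lemma~\ref{no_l4} give in a controlled form, but these only yield the inequality after one has ensured numerical independence and unsplitness; the descent of families to the $\rc$-quotient that you propose is not established in general, which is why the paper instead contracts a single suitable ray, invokes Theorem~\ref{bundle_criterion} to get a $\pr^{\iota-1}$-fibration, handles the Brauer obstruction via Proposition~\ref{brauer_prop}, and then classifies the relevant bundles in Section~\ref{vb_section} before inducting on $\rho$. Your identification of the additivity/interaction-of-rays step as the genuine obstruction is correct, and is exactly the reason the paper restricts to $\rho\le 3$ or $n\le 5$.
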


Based on the earlier work due to Wi\'sniewski \cite{wisn90}, Bonavero, 
Casagrande, Debarre and Druel \cite{BCDD} generalized Conjecture \ref{mukai_conj} 
by replacing $r_X$ by $\iota_X$. 

\begin{conjecture}[generalized Mukai conjecture]\label{gmukai_conj}
We have $\rho_X(\iota_X-1)\leq\dim X$ and equality holds if and only if 
$X\simeq(\pr^{\iota_X-1})^{\rho_X}$. 
\end{conjecture}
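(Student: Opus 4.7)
The plan is to follow the Mori-theoretic strategy pioneered by Wi\'sniewski \cite{wisn90} and extended by Bonavero-Casagrande-Debarre-Druel \cite{BCDD}: bound $\rho_X(\iota_X-1)$ via families of rational curves of low anticanonical degree together with extremal contractions, and then characterize the equality case by a rigidity argument.

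First, I would choose a dominating family $\sV$ of rational curves whose general member $C$ has minimal anticanonical degree $-K_X\cdot C=\iota_X$. By the standard dimensional estimates of Ionescu-Wi\'sniewski and Kebekus, for every $x\in X$ the locus $\Locus(\sV_x)\subset X$ of curves of the family through $x$ has dimension at least $\iota_X-1$. Iterating this estimate along $\sV$-chains, each $\rc(\sV)$-equivalence class has dimension at least $k(\iota_X-1)$ for some $k\geq 1$.

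Next, I would induct on $\rho_X$ via the $\rc(\sV)$-fibration $\pi:X\dashrightarrow Z$. If $\pi$ is the identity, so that $X$ is $\rc(\sV)$-connected, then the dimensional estimate gives $\dim X\geq\rho_X(\iota_X-1)$ provided one can produce $\rho_X$ unsplit families $\sV_1,\dots,\sV_{\rho_X}$ corresponding to distinct $K_X$-negative extremal rays whose numerical classes are linearly independent in $\NC(X)$. Otherwise, $Z$ is smaller-dimensional with $\rho_Z<\rho_X$, and one descends after controlling $\iota_Z\geq\iota_X$. For the equality case, sharpness of the Kebekus-type estimate forces each $\Locus(\sV_{i,x})$ to be isomorphic to $\pr^{\iota_X-1}$ (via the characterization of projective space by the abundance of lines through a point); the $\rho_X$ independent families then induce $\rho_X$ commuting $\pr^{\iota_X-1}$-bundle structures on $X$, and $X\simeq(\pr^{\iota_X-1})^{\rho_X}$ follows by a standard splitting/rigidity argument.

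The main obstacle lies in the second step: producing the required tower of $\rho_X$ essentially independent low-degree families whose loci together fill $X$ and contribute linearly independent classes in $\NC(X)$. This is precisely where Conjecture \ref{gmukai_conj} remains open in general, and the abstract's restriction to $\rho_X\leq 3$ or $\dim X\leq 5$ reflects the fact that in these ranges the possible configurations of extremal rays and their associated quasi-unsplit families can be enumerated and analyzed case by case, while a uniform argument for arbitrary $\rho_X$ and $\dim X$ remains elusive.
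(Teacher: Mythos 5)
This statement is Conjecture~\ref{gmukai_conj}, which the paper does not prove; it is stated as an open conjecture, with Remark~\ref{AGMrmk} recording the known partial cases ($n\leq 5$ by \cite{ACO}, $\rho\leq 3$ by \cite{CMSB,kebekus,NO}) via citations rather than proof. Your write-up is not a proof but a strategy sketch, and you correctly say so in the final paragraph, so there is nothing to compare against on the level of a completed argument.

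That said, your sketch does accurately reflect the approach used in the literature and, in spirit, the machinery developed in Sections~\ref{famrat_section}--\ref{domunsplit_section} of this paper for the weaker Conjecture~$\AGM^n_\rho$: take a minimal dominating family, control $\Locus(V_x)$ via the bend-and-break dimension estimate (Proposition~\ref{no_p1}), iterate through the $\rc(\sV)$-fibration (Construction~\ref{const}, Theorem~\ref{no_t1}), and count numerically independent unsplit families against $\rho_X$ (Proposition~\ref{no_p3}, Lemma~\ref{no_l4}). You also correctly identify the bottleneck: producing a full set of $\rho_X$ numerically independent unsplit \emph{dominating} families (or the corresponding extremal rays with small fibers) so that the inequality and the rigidity in the equality case can be established. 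The paper's contribution is precisely to make this work under the extra hypotheses $\rho\leq 3$ or $n\leq 5$ (Propositions~\ref{AGMn2}, \ref{AGMn3}, \ref{five}), and to push the descent through $\pr^{\iota-1}$-fibrations via Proposition~\ref{brauer_prop}, Lemma~\ref{vb_lift} and Corollaries~\ref{vb_gm}, \ref{vb_agm}, rather than by the ``commuting bundle structures'' rigidity you allude to. One small imprecision in your sketch: the $\rc(\sV)$-fibration target $Z$ need not itself be a Fano manifold with $\iota_Z\geq\iota_X$; the clean descent $\iota_Y\geq\iota_X$ is only available once one has a genuine $\pr^{m}$-fibration $X\to Y$ between smooth projective varieties (Lemma~\ref{pnbdle_lemma}), which is exactly why the paper works to locate an extremal ray with small fibers (Theorem~\ref{DUI_thm}) before descending.
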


As in \cite{fujita}, we \emph{split} the Mukai conjecture and 
the generalized Mukai conjecture as below since we want to do certain 
inductive arguments (see Section \ref{rays_section}). 

\begin{conjecture}\label{MGM}
Let $n$ and $\rho$ be positive integers. 
\begin{enumerate}
\renewcommand{\theenumi}{\arabic{enumi}}
\renewcommand{\labelenumi}{(\theenumi)}
\item
$($Conjecture $\M^n_\rho$$)$
Let $X$ be an $n$-dimensional Fano manifold. 
If $\rho_X\geq\rho$ and $r_X\geq(n+\rho)/\rho$, then 
$X$ is isomorphic to $(\pr^{r_X-1})^{\rho}$. 
\item
$($Conjecture $\GM^n_\rho$$)$
Let $X$ be an $n$-dimensional Fano manifold. 
If $\rho_X\geq\rho$ and $\iota_X\geq(n+\rho)/\rho$, then 
$X$ is isomorphic to $(\pr^{\iota_X-1})^{\rho}$. 
\end{enumerate}
\end{conjecture}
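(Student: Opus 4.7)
The plan is to attack Conjecture \ref{MGM} by induction on the Picard number $\rho$, exploiting the Mori-theoretic structure of $X$. Since $\iota_X$ is always a positive multiple of $r_X$, Conjecture $\GM^n_\rho$ immediately implies Conjecture $\M^n_\rho$, so I would focus on $\GM^n_\rho$. The base case $\rho=1$ collapses to the hypothesis $\iota_X\geq n+1$, which by the Cho--Miyaoka--Shepherd-Barron characterization of projective space forces $X\cong\pr^n$.

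For the inductive step, pick an extremal ray $R\subset\overline{\NE}(X)$ and analyze its elementary contraction $\varphi_R:X\to Y$. The assumption $\iota_X\geq(n+\rho)/\rho$ gives a length bound $l(R)\geq(n+\rho)/\rho$, so the Ionescu--Wi\'sniewski inequality $\dim F+\dim\Locus(R)\geq n+l(R)-1$ forces every non-trivial fiber $F$ of $\varphi_R$ to be substantially positive-dimensional; in particular, small contractions can be ruled out as soon as $\rho\geq 2$. When $\varphi_R$ is of fiber type, one expects the general fiber to be $\pr^{\iota_X-1}$, so a bundle-structure criterion (Fujita's scroll classification, or Kachi--Peternell--Wi\'sniewski-type results on extremal contractions of large length) should upgrade $\varphi_R$ to a smooth $\pr^{\iota_X-1}$-bundle. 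The target $Y$ is then a Fano manifold of dimension $n-(\iota_X-1)$, Picard number $\rho-1$, and pseudoindex at least $\iota_X$; its invariants meet the hypotheses of $\GM^{n-(\iota_X-1)}_{\rho-1}$, so by induction $Y\cong(\pr^{\iota_X-1})^{\rho-1}$, and a triviality argument for $\pr^{\iota_X-1}$-bundles over a product of projective spaces upgrades $\varphi_R$ to the global product $(\pr^{\iota_X-1})^\rho$.

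The main obstacle is making every step of this inductive loop actually go through. Excluding divisorial contractions and non-$\pr$-bundle fiber-type contractions globally (rather than merely bounding fiber dimensions) is delicate when $\iota_X$ is only moderately large, and typically requires the simultaneous analysis of several extremal rays or chain-of-rational-curves arguments to propagate information across the Mori cone. Verifying the inequality $\iota_Y\geq\iota_X$ requires lifting rational curves from $Y$ through $\varphi_R$ without losing degree, which in turn requires the candidate bundle to carry no spurious low-degree sections. For the bounded ranges $\rho\leq 3$ or $\dim X\leq 5$ treated in the paper, the combinatorial shape of $\overline{\NE}(X)$ is tame enough to permit an enumeration of contraction patterns and a case-by-case verification of the conclusion, but each contraction type still has to be handled individually, which is where the bulk of the work is likely to lie.
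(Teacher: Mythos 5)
The statement you are addressing is a conjecture, not a theorem: the paper itself does not prove Conjecture $\M^n_\rho$ or $\GM^n_\rho$ in general; Remark \ref{AGMrmk} merely cites \cite{CMSB,kebekus,NO,ACO} for the known ranges, and the paper's own theorem concerns the related but distinct Conjecture $\AM^n_\rho$. Your sketch is therefore necessarily incomplete, but it is worth observing that its skeleton coincides with the inductive machinery the paper actually builds in Sections \ref{vb_section}--\ref{five_section}. The reduction you outline---contract an extremal ray $R$ whose fibers are small, obtain a $\pr^{\iota_X-1}$-fibration onto a lower-dimensional, lower-Picard-number Fano $Y$ with $\iota_Y\geq\iota_X$, apply induction to $Y$, then trivialize the bundle---is precisely Proposition \ref{induction_prop}(\ref{induction_prop_gm}), assembled from Theorem \ref{bundle_criterion} (H\"oring--Novelli gives the $\pr^{\iota_X-1}$-fibration once fiber dimensions are $\leq\iota_X-1$), Lemma \ref{pnbdle_lemma} (BCDD, giving $\iota_Y\geq\iota_X$ for free once $\phi_R$ is a $\pr^m$-fibration, so your worry about ``lifting curves without losing degree'' is already handled there), Proposition \ref{brauer_prop} (Brauer vanishing over a rational base upgrades the fibration to a genuine projective bundle), and Corollary \ref{vb_gm} (the triviality argument).

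Where your proposal is vague---``pick an extremal ray,'' ``one expects the general fiber to be $\pr^{\iota_X-1}$''---is exactly where the conjecture is open. It is \emph{not} true that an arbitrary extremal ray will have the needed small-fiber property, and Wi\'sniewski's inequality alone does not rule out divisorial or bad fiber-type contractions for moderate $\iota_X$. The paper only manufactures the required extremal ray under extra input: Theorem \ref{DUI_thm} does so when there exist $\rho-1$ numerically independent dominating unsplit families of rational curves, and Propositions \ref{AGMn2}, \ref{AGMn3} verify this unsplit-family hypothesis when $\rho\leq 3$ by Construction \ref{const} and chain-connectedness arguments. No one knows how to produce such a ray when $\rho$ is large and $n/\rho$ is small, because a non-unsplit minimal horizontal family may appear and destroy the dimension count in Lemma \ref{no_l4}. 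That is the genuine gap, and it is why $\GM^n_\rho$ (hence $\M^n_\rho$) is still a conjecture rather than a theorem. Your outline is an accurate map of the known strategy, but the statement has no proof for you to recover.
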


It is obvious that the Mukai conjecture (resp.\ the generalized Mukai conjecture) 
is true if and only if Conjecture $\M^n_\rho$ (resp.\ $\GM^n_\rho$) is true 
for all positive integers $n$, $\rho$.

We conjecture that $n$-dimensional Fano manifolds $X$ with 
$\rho_X(r_X-1)\geq n-1$ (resp.\ $\rho_X(\iota_X-1)\geq n-1$) have very special 
structure. More precisely, we settle the following conjecture:

\begin{conjecture}\label{AMAGM}
Let $n$ and $\rho$ be positive integers. 
\begin{enumerate}
\renewcommand{\theenumi}{\arabic{enumi}}
\renewcommand{\labelenumi}{(\theenumi)}
\item
$($Conjecture $\AM^n_\rho$$)$
Let $X$ be an $n$-dimensional Fano manifold. 
If $\rho_X\geq\rho$ and $r_X\geq(n+\rho-1)/\rho$, then 
$X$ is isomorphic to one of the following:
\begin{enumerate}
\renewcommand{\theenumii}{\roman{enumii}}
\renewcommand{\labelenumii}{\rm{(\theenumii)}}
\item
$(\pr^{r_X-1})^{\rho}$,
\item
$\Q^{r_X}\times(\pr^{r_X-1})^{\rho-1}$,
\item
$\pr_{\pr^{r_X}}(\sO^{\oplus r_X-1}\oplus\sO(1))
\times(\pr^{r_X-1})^{\rho-2}$, 
\item
$\pr_{\pr^{r_X}}(T_{\pr^{r_X}})\times(\pr^{r_X-1})^{\rho-2}$.
\end{enumerate}
\item
$($Conjecture $\AGM^n_\rho$$)$
Let $X$ be an $n$-dimensional Fano manifold. 
If $\rho_X\geq\rho$ and $\iota_X\geq(n+\rho-1)/\rho$, then 
$X$ is isomorphic to one of the following:
\begin{enumerate}
\renewcommand{\theenumii}{\roman{enumii}}
\renewcommand{\labelenumii}{\rm{(\theenumii)}}
\item
$(\pr^{\iota_X-1})^{\rho}$,
\item
$\Q^{\iota_X}\times(\pr^{\iota_X-1})^{\rho-1}$,
\item
$\pr_{\pr^{\iota_X}}(\sO^{\oplus\iota_X-1}\oplus\sO(1))
\times(\pr^{\iota_X-1})^{\rho-2}$, 
\item
$\pr_{\pr^{\iota_X}}(T_{\pr^{\iota_X}})\times(\pr^{\iota_X-1})^{\rho-2}$,
\item
$\pr^{\iota_X}\times(\pr^{\iota_X-1})^{\rho-1}$.
\end{enumerate}
\end{enumerate}
\end{conjecture}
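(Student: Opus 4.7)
The plan is to proceed by induction on the Picard number $\rho$, in the splitting style of Conjecture \ref{MGM}. The base case $\rho=1$ asks for Fano manifolds $X$ with $r_X\geq n$ (resp.\ $\iota_X\geq n$), and here the classical theorem of Kobayashi--Ochiai (together with the result of Cho--Miyaoka--Shepherd-Barron in the pseudoindex version) identifies $X$ as $\pr^n$ or $\Q^n$; these are exactly the degenerate factors appearing in cases (i), (ii), (v). This both settles $\AM^n_1$ and $\AGM^n_1$ and furnishes the building blocks for the inductive step.

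For the inductive step, I would pick an extremal ray $R\subset\overline{\NE}(X)$ with contraction $\phi_R:X\to Y$ and length $\ell(R)\geq\iota_X\geq r_X\geq (n+\rho-1)/\rho$. Wi\'sniewski's inequality $\dim\Locus(R)+\ell(R)\geq n+1$ then forces the geometry of $\phi_R$ to be tightly constrained: in the favorable scenario $\phi_R$ is of fiber type, its general fibers are Fano of large index, hence projective spaces or quadrics by the base case, and $\phi_R$ is essentially a $\pr^{r_X-1}$-bundle (or a quadric bundle) onto a lower-dimensional Fano manifold $Y$. If one can show that $X\simeq F\times Y$ splits as a product, then applying Conjecture $\AM$ or $\AGM$ to $Y$ with parameters $(n-\dim F,\rho-1)$ closes the induction.

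The main obstacle is to handle the situation where no such product splitting exists, which is precisely where the sporadic cases (iii), (iv), (v) appear. Cases (iii) and (iv) arise when an extremal contraction yields a non-trivial $\pr^{r_X-1}$-bundle over $\pr^{r_X}$, and distinguishing $\sO^{\oplus r_X-1}\oplus\sO(1)$ from $T_{\pr^{r_X}}$ requires examining a second extremal ray of $X$ and matching its contraction type; Wi\'sniewski's classification of projective bundles of small relative Picard number should be the decisive tool. Case (v) in $\AGM$ reflects the subtlety that $\iota_X$ may be realized by a minimal rational curve that does not generate an extremal ray, so the hypothesis does not transfer directly to the length of every extremal ray, and one must run a parallel analysis using deformations of rational curves (via $\ChLocus$ or $\Rat$) rather than extremal rays alone.

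Finally, under the restrictions $\rho\leq 3$ or $n\leq 5$ appearing in the abstract, only a finite and tractable list of pairs $(\rho,r_X)$ and $(\rho,\iota_X)$ arise, so the induction terminates after a bounded number of case splits which can be dispatched by inspection of known low-dimensional Fano classifications; in particular, the divisibility $r_X\mid\iota_X$ keeps the discrete possibilities very limited and guides the reduction of $\AGM$ to $\AM$ plus a separate treatment of the extra $\pr^{\iota_X}$ factor.
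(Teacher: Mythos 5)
The statement you are addressing is a \emph{conjecture}; the paper does not prove it in general, only the special cases $\rho\leq 3$ or $n\leq 5$ (Theorem \ref{mainthm}, via Proposition \ref{mainAGMprop} and Theorem \ref{AM3_thm}). Your outline does follow the rough scaffold the paper uses for those cases --- induction on $\rho$ through a suitable extremal contraction --- but several key steps are either misdiagnosed or left as ``if one can show\dots'', and these are precisely where the real content lies.

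First, ``pick an extremal ray $R$ with $\ell(R)\geq\iota_X$'' is vacuous: every rational curve on $X$ has $(-K_X\cdot C)\geq\iota_X$ by the very definition of pseudoindex, so \emph{every} extremal ray has length $\geq\iota_X$. What is actually needed, and what occupies Sections \ref{domunsplit_section}--\ref{five_section}, is an extremal ray $R$ all of whose fibers have dimension $\leq\iota_X-1$; only then does the H\"oring--Novelli theorem (Theorem \ref{bundle_criterion}) upgrade $\phi_R$ to a $\pr^{\iota_X-1}$-fibration, whose fibers are necessarily projective spaces (never quadrics --- quadrics enter only as a possible base $Z$ in Theorem \ref{AM3_thm}). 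Producing such a ray (Theorem \ref{DUI_thm}) is a delicate dimension count involving $\rho-1$ numerically independent dominating unsplit families, together with Construction \ref{const} and Lemma \ref{novlem} to guarantee that the relevant families are all unsplit. This is exactly the bottleneck that forces the hypothesis $\rho\leq 3$ or $n\leq 5$.

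Second, you cannot simply ask ``if one can show that $X\simeq F\times Y$ splits as a product''. A $\pr^m$-fibration over a rational base typically does \emph{not} split; that is why cases (iii) and (iv) are in the list. The paper's route is: use vanishing of the cohomological Brauer group (Proposition \ref{brauer_prop}) to present $X$ as $\pr_Y(\sE)$ for some rank-$(m+1)$ vector bundle $\sE$ on $Y$, and then pin down $\sE$ by restricting to the various lines of $Y$ (equation \eqref{hor_eqn}), applying the Andreatta--Wi\'sniewski splitting criterion and Sato's classification of uniform bundles (Lemma \ref{vb_lift}, Corollary \ref{vb_agm}). Your suggestion that cases (iii)/(iv) are distinguished by ``examining a second extremal ray'' and ``Wi\'sniewski's classification of projective bundles of small relative Picard number'' does not match what is required; it is the uniform-bundle dichotomy on $\pr^{r_X}$ that does the work.

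Third, your explanation of case (v) is off. The manifold $\pr^{\iota}\times(\pr^{\iota-1})^{\rho-1}$ appears in $\AGM$ but not $\AM$ for the elementary numerical reason that for $\rho\geq 2$ it has pseudoindex $\iota$ but index $1$, so it satisfies the $\AGM$ hypothesis while failing the $\AM$ one. There is no phenomenon of minimal rational curves failing to generate extremal rays; indeed, the paper's reduction of $\AM^n_3$ to the $\AGM$ machinery starts by invoking Novelli--Occhetta to force $\iota_X=r_X$. Your invocation of $\ChLocus$ is also out of place here: those tools are used in Section \ref{rcfib_section} to bound $\rho_X$ and to supply the unsplit families, not to handle case (v).

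Finally, a small but real point: the paper deliberately avoids Cho--Miyaoka--Shepherd-Barron/Miyaoka for the base case $\AGM^n_1$ (Remark \ref{miyaoka_rmk}), so inserting it as the base of your induction would change the logical structure of the argument.
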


In particular, Conjecture $\AM^n_1$ (resp.\ Conjecture $\AGM^n_1$) asserts that 
an $n$-dimensional Fano manifold $X$ with $r_X\geq n$ (resp.\ $\iota_X\geq n$) 
will be isomorphic to either $\pr^n$ or $\Q^n$. 
The ``A" in $\AM^n_\rho$ and $\AGM^n_\rho$ stands for ``advanced". 
We note that Conjecture \ref{AMAGM} asserts in particular that the variety 
$\pr^\iota\times(\pr^{\iota-1})^{\rho-1}$ is characterized by the Fano manifold 
such that the gap between index and pseudoindex is ``largest". 

\begin{remark}\label{AGMrmk}
Clearly, Conjecture $\GM^n_\rho$ (resp.\ Conjecture $\AGM^n_\rho$) implies 
Conjecture $\M^n_\rho$ (resp.\ Conjectures $\AM^n_\rho$ and $\GM^n_\rho$). 
We also note that Conjecture $\GM^n_\rho$ is true if $n\leq 5$ (\cite{ACO}) or 
$\rho\leq 3$ (\cite{CMSB, kebekus, NO}), 
Conjecture $\AGM^n_\rho$ is true if $n\leq 3$ (\cite{isk1, isk2, sho, MoMu}), 
Conjecture $\AM^n_\rho$ is true if $n\leq 4$ (\cite{wis}) 
or $\rho\leq 2$ (\cite{KO, wisn91}), and Conjecture $\AGM^n_1$ is proved in 
\cite{miyaoka}. 
\end{remark}

In this paper, we prove Conjecture $\AM^n_\rho$ provided that $\rho\leq 3$ or
$n\leq 5$.

\begin{thm}[Main Theorem]\label{mainthm}
Conjecture $\AM^n_\rho$ is true if 
$\rho\leq 3$ or $n\leq 5$.
\end{thm}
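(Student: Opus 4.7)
The plan is to leverage the already-established Conjecture $\GM^n_\rho$ (known for $\rho\le 3$ or $n\le 5$ by Remark~\ref{AGMrmk}) in order to reduce Theorem~\ref{mainthm} to a handful of boundary cases, which I then treat by an extremal-ray analysis.

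Since $\iota_X\ge r_X$, the hypothesis $r_X\ge(n+\rho-1)/\rho$ of $\AM^n_\rho$ gives $\iota_X\ge(n+\rho-1)/\rho$. If in fact $\iota_X\ge(n+\rho)/\rho$, Conjecture $\GM^n_\rho$ yields $X\simeq(\pr^{\iota_X-1})^\rho$, whose index computes as $r_X=\iota_X$, so $X$ falls in case (i) of $\AM^n_\rho$. Otherwise $\iota_X=r_X$ must equal the integer $\lceil(n+\rho-1)/\rho\rceil$, which is strictly less than $(n+\rho)/\rho$. Combined with the already-known cases $\rho\le 2$ (\cite{KO,wisn91}) and $n\le 4$ (\cite{wis}), an arithmetic enumeration shows that only two boundary situations survive: (a) $\rho=3$, $n=3k+1$, and $r_X=\iota_X=k+1$ for some $k\ge 2$; and (b) $n=5$, $\rho=4$, and $r_X=\iota_X=2$. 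The parallel residual cases (for instance $\rho=3$ with $n\equiv 2\pmod 3$, or $n=5$ with $\rho\ge 6$) turn out to be vacuous, since the dimensional compatibility $n=\rho(\iota_X-1)$ forced by $\GM$ fails for integer $\iota_X$.

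To handle the two genuine boundary cases I would follow the Mori-theoretic strategy of the author's earlier work \cite{fujita} on the split Mukai conjecture. Because $\rho_X\ge 3$, there are at least three distinct extremal rays, each of length at least $\iota_X=r_X$; by the Ionescu--Wi\'sniewski inequality the associated Mori contractions have fibers of dimension at least $r_X-1$. The existence of a single Cartier divisor $L$ with $-K_X\sim r_XL$ further constrains every extremal ray $R$ through the integrality of $L\cdot R$, and when several rays share the minimal length $r_X$ these constraints rigidify the contraction types to $\pr^{r_X-1}$-bundles, quadric bundles, or blow-ups of linear subspaces. Peeling off $\rho-2$ commuting $\pr^{r_X-1}$-fibrations should produce a splitting $X\simeq Y\times(\pr^{r_X-1})^{\rho-2}$ with $\rho_Y=2$, at which point the classifications of Wi\'sniewski \cite{wisn91} and Kobayashi--Ochiai \cite{KO} identify $Y$ as one of $\Q^{r_X}$, $\pr_{\pr^{r_X}}(\sO^{\oplus r_X-1}\oplus\sO(1))$, or $\pr_{\pr^{r_X}}(T_{\pr^{r_X}})$, producing cases (ii)--(iv) of $\AM^n_\rho$.

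The hardest step will be this geometric splitting: one must exclude small contractions and unexpected divisorial contractions, and verify that the $\rho-2$ putative $\pr^{r_X-1}$-fibrations coming from different extremal rays are mutually compatible and globally define a smooth morphism onto $(\pr^{r_X-1})^{\rho-2}$. The decisive technical input is that the full index $r_X$ (not merely the pseudoindex) is large, which supplies a common Cartier divisor $L$ rigidifying intersection theory across all rays simultaneously. The five-dimensional boundary case (b) should reduce to a finite check in the style of \cite{wis}, whereas case (a) for arbitrary $k\ge 2$ will require a uniform argument, which I expect to be the real substance of the proof.
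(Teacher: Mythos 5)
Your reduction via $\GM^n_\rho$ is a sensible idea, and your arithmetic correctly isolates the two genuine boundary situations: $\rho=3$, $n=3r_X-2$, $r_X\geq 3$, and $\rho=4$, $n=5$, $r_X=2$. The paper arrives at the same residue by citing the split form of the Mukai conjecture together with Proposition~\ref{mainAGMprop}\eqref{mainAGMprop2} and \cite[Theorem~5.1]{novelli}, so this part of your approach is aligned in spirit. However, you do not address the possibility $\rho_X>\rho$ in the boundary case. If $\iota_X=(n+\rho-1)/\rho$, no version of $\GM$ that you have quoted applies, and you still need to rule out $\rho_X>\rho$; the paper does this precisely via \cite[Theorem~5.1]{novelli}, as remarked after Theorem~\ref{mainthm}. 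This is a missing input, not a harmless detail.

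The second and more serious gap is in the treatment of the residual case $\rho=3$, $n=3r_X-2$. You assert a direct product splitting $X\simeq Y\times(\pr^{r_X-1})^{\rho-2}$ with $\rho_Y=2$ and then classify $Y$ via Wi\'sniewski and Kobayashi--Ochiai; but this splitting is exactly the thing that needs to be proved, and the mechanism you offer (commuting extremal contractions, rigidifying intersection numbers with $L$) does not obviously deliver it --- for instance, $\pr_{\pr^{r}}(\sO^{\oplus r-1}\oplus\sO(1))\times\pr^{r-1}$ has three extremal rays of which one is a divisorial contraction and one is a $\pr^{r-1}$-fibration that is \emph{not} a product projection, so not every $\pr^{r-1}$-fibration splits. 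The paper does not prove a direct product decomposition at all. It produces a $\pr^{r-1}$-fibration $\phi_R\colon X\to Y$ (Theorem~\ref{bundle_criterion}, using \cite{HN}), then another $\phi_S\colon Y\to Z$ to a $\rho=1$ base, proves in Claim~\ref{AM3_claim} that $Z\simeq\pr^r$ or $\Q^r$ (this is the hardest step, using Andreatta--Wi\'sniewski, Occhetta--Wi\'sniewski, Cho--Sato and more), then rebuilds $Y$ and $X$ as projective \emph{bundles} using the Brauer-group vanishing of Proposition~\ref{brauer_prop} and the uniform vector bundle classification of Corollary~\ref{vb_agm} (which ultimately rests on Sato's theorem). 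That vector-bundle analysis --- Lemma~\ref{vb_lift}, Corollaries~\ref{vb_gm} and \ref{vb_agm} --- is where the product factors actually appear; nothing in your proposal substitutes for it. Finally, your candidate list for $Y$ is inconsistent: you require $\rho_Y=2$ and $\dim Y=2r_X-1$, yet $\Q^{r_X}$ has $\rho=1$ and dimension $r_X$; the correct $\rho=2$ list is $(\pr^{r_X-1})^2$, $\Q^{r_X}\times\pr^{r_X-1}$, $\pr_{\pr^{r_X}}(\sO^{\oplus r_X-1}\oplus\sO(1))$, and $\pr_{\pr^{r_X}}(T_{\pr^{r_X}})$. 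These slips suggest the architecture of the final splitting step has not been worked out.
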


In other words, we classify the Fano manifolds $X$ which satisfies 
the property $\rho_X(r_X-1)\geq\dim X-1$ under the condition 
$\rho_X\leq 3$ or $\dim X\leq 5$. 
We note that, as a corollary of \cite[Theorem 5.1]{novelli}, 
any $n$-dimensional Fano manifold $X$ with $\rho_X\geq 3$ and 
$r_X\geq(n+2)/3$ satisfies that either $\rho_X=3$ or $X\simeq(\pr^1)^4$. 
We rephrase Theorem \ref{mainthm} for reader's convenience. 

\begin{thm}\label{iikae}
Let $X$ be an $n$-dimensional Fano manifold. 
Suppose that $\rho_X(r_X-1)\geq n-1$. Suppose furthermore that 
either $\rho_X\leq 3$ or $n\leq 5$. 
Then $X$ is isomorphic to one of 
$(\pr^{r_X-1})^{\rho_X}$, 
$\Q^{r_X}\times(\pr^{r_X-1})^{\rho_X-1}$ $(r_X\geq 3)$, 
$\pr_{\pr^{r_X}}(\sO^{\oplus r_X-1}\oplus\sO(1))
\times(\pr^{r_X-1})^{\rho_X-2}$ or 
$\pr_{\pr^{r_X}}(T_{\pr^{r_X}})\times(\pr^{r_X-1})^{\rho_X-2}$.
\end{thm}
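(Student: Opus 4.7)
The plan is to reduce Theorem \ref{iikae} to a short list of residual cases via the classification results in Remark \ref{AGMrmk}, and then to dispatch those by analyzing extremal rays. Since $(n+\rho-1)/\rho$ decreases in $\rho$, the hypothesis is weakest at $\rho=\rho_X$, so I may replace $\rho$ by $\rho_X$ and restate the assumption as $\rho_X(r_X-1)\geq n-1$.

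If the inequality is strict, Conjecture $\M^n_{\rho_X}$ (known for $\rho\leq 3$ or $n\leq 5$ as a consequence of $\GM^n_\rho$, see Remark \ref{AGMrmk}) applies and gives $X\simeq(\pr^{r_X-1})^{\rho_X}$. So I may assume the equality $\rho_X(r_X-1)=n-1$. The cases $\rho_X\leq 2$ (\cite{KO,wisn91}) and $n\leq 4$ (\cite{wis}) are already known, so I may assume $\rho_X\geq 3$ and $n\geq 5$. If moreover $r_X\geq(n+2)/3$, the corollary of \cite[Theorem 5.1]{novelli} recalled after Theorem \ref{mainthm} forces $\rho_X=3$ (the alternative $X\simeq(\pr^1)^4$ having $r_X=1$ is incompatible with $r_X\geq 2$). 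Two residual ranges then remain:
\begin{itemize}
\item[(A)] $\rho_X=3$ with $r_X=(n+2)/3\geq 3$ (so $n\in\{7,10,13,\dots\}$);
\item[(B)] $n=5$, $\rho_X=4$, $r_X=2$ (hence $\iota_X=2$ by the generalized Mukai bound).
\end{itemize}

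The heart of the proof is case (A). I would examine the three extremal rays $R_1,R_2,R_3$ of $\NE(X)$; each has length $\ell(R_i)\geq\iota_X\geq r_X=(n+2)/3$, a very large fraction of $\dim X$. Wi\'sniewski-type length inequalities then rule out divisorial and small contractions, so every contraction $\varphi_i:X\to Y_i$ is of fiber type. Kobayashi--Ochiai together with the already-known $\AM^m_1$ identifies the general fiber of $\varphi_i$ among $\pr^{r_X-1}$, $\pr^{r_X}$, and $\Q^{r_X}$, while the base $Y_i$ has $\rho_{Y_i}=2$ with an inherited index bound, so the known $\rho\leq 2$ case of $\AM$ places $Y_i$ on the list. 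Combining two independent fiber-type contractions via the inductive splitting method of Section \ref{rays_section} then identifies $X$ with one of the products (i)--(iv). Case (B) lies outside Novelli's range and must be handled separately: an extremal ray of length $\geq 2$ yields a fiber-type contraction with $\pr^1$- or $\Q^2$-general fiber, whose base is a Fano $4$-fold falling under the known $n\leq 4$ case of $\AM$; matching invariants leaves only (iii) $\pr_{\pr^2}(\sO\oplus\sO(1))\times(\pr^1)^2$ and (iv) $\pr_{\pr^2}(T_{\pr^2})\times(\pr^1)^2$.

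The main obstacle is the splitting step in case (A): showing that two fiber-type contractions of sufficient length split $X$ as a genuine product rather than a twisted fibration. This rests on verifying that the fibers of one contraction map isomorphically to their images under the other, which in turn requires deformation-rigidity of the $\pr^{r_X-1}$-fiber under the numerical constraint $\rho_X=3$, and on classifying the non-trivial $\pr^{r_X-1}$-bundles over $\pr^{r_X}$ that account for the exceptional items (iii) and (iv) whenever a clean product decomposition fails in a controlled way. I expect the bulk of Section \ref{rays_section} to be devoted precisely to this splitting machinery.
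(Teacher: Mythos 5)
Your reduction to the two residual cases (A) and (B) is essentially correct (modulo a small slip: $(\pr^1)^4$ has $r_X=2$, not $1$, but it is in the target list anyway), and case (A) does coincide with the paper's Theorem~\ref{AM3_thm}. However, the central step you propose for case (A) does not go through as stated. You claim that Wi\'sniewski's inequality alone rules out divisorial (and small) contractions. It does not: in the setting $n=3r-2$, $\iota_X=r$, $\rho_X=3$, Wi\'sniewski gives a nontrivial fiber $F$ of a divisorial contraction $\dim F\geq l(R)\geq r$, which is perfectly compatible with $F$ lying in a divisor. The paper explicitly allows for divisorial rays (Claim~\ref{claim1} of Theorem~\ref{DUI_thm}) and handles them by a genuinely different mechanism: it first finds numerically independent dominating unsplit families $V^1,\dots,V^{\rho-1}$, shows that if some ray is divisorial then another ray $R'$ meeting its exceptional divisor must have \emph{all} fibers of dimension $\leq\iota_X-1$, and then concludes via Theorem~\ref{bundle_criterion} (which rests on \cite{HN}) that $\phi_{R'}$ is a $\pr^{\iota_X-1}$-fibration with \emph{smooth} base. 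Without this, your plan to pass to a base $Y_i$ with $\rho_{Y_i}=2$ and "inherited index bound" is unjustified: for an arbitrary fiber-type contraction the target is merely normal, not a smooth Fano, and there is no bound on its pseudoindex.

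There is also a gap in the "splitting" step. The mechanism that produces the list (i)--(iv) is not deformation-rigidity of the fibers but rather: classify the base $Z$ (the paper's Claim~\ref{AM3_claim} reduces $Z$ to $\pr^r$ or $\Q^r$ by an extremal-ray case analysis using \cite{AW93,OW,cho-sato,KO}), note $Z$ is rational so the Brauer obstruction vanishes (Proposition~\ref{brauer_prop}) and each $\pr^{r-1}$-fibration in the tower $X\to Y\to Z$ is a genuine projective bundle, and finally invoke the vector-bundle rigidity results of Section~\ref{vb_section} (Lemma~\ref{vb_lift}, Corollaries~\ref{vb_gm} and~\ref{vb_agm}, relying on \cite{AW} and \cite{sato}) which, under the Fano pseudoindex constraint $\iota_X\geq r$, force the bundle to be one of the allowed types. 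Your outline gestures at this ("classifying the non-trivial $\pr^{r_X-1}$-bundles over $\pr^{r_X}$") but attributes it to the wrong section and to a rigidity principle the paper doesn't use. So the high-level shape of your argument is right, but both of the load-bearing steps---finding a contraction with uniformly small fibers and converting a $\pr$-fibration tower into the explicit product list---are left as unjustified assertions where the paper has substantial content.
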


In order to prove Theorem \ref{mainthm}, we discuss some inductive process. 
We will prove the following proposition. 

\begin{proposition}\label{mainAGMprop}
\begin{enumerate}
\renewcommand{\theenumi}{\arabic{enumi}}
\renewcommand{\labelenumi}{\rm{(\theenumi)}}
\item\label{mainAGMprop1}
Let $n\geq 2$ and $\rho\in\{2$, $3\}$. Then Conjectures $\AGM^{n'}_{\rho-1}$ for all 
$n'\leq n-(n-1)/\rho$ imply Conjecture $\AGM^n_\rho$. 
\item\label{mainAGMprop2}
Conjecture $\AGM^n_\rho$ is true if $n\leq 5$ and $\rho\geq 2$. 
\end{enumerate}
\end{proposition}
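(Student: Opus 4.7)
The plan is to prove part \ref{mainAGMprop1} through an analysis of extremal contractions and deduce part \ref{mainAGMprop2} by combining it with known base cases.

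For part \ref{mainAGMprop1}, start with an $n$-dimensional Fano manifold $X$ satisfying $\rho_X \geq \rho$ and $\iota_X \geq (n+\rho-1)/\rho$ with $\rho \in \{2,3\}$. First, pick an extremal ray $R$ of $\NE(X)$; its length $l(R) := \min\{-K_X \cdot C : [C]\in R\}$ satisfies $l(R) \geq \iota_X$. Applying the Ionescu--Wi\'sniewski inequality to the associated contraction $\phi_R\colon X\to Y$ severely restricts birational $\phi_R$ when $l(R)$ is large; combined with $\rho_X \geq 2$, this forces the existence of a fiber-type extremal contraction $\phi \colon X \to Y$. A general fiber $F$ of $\phi$ then satisfies $\dim F \geq l(R) - 1$, hence
\[
\dim Y \;\leq\; n - \iota_X + 1 \;\leq\; n - (n-1)/\rho,
\]
matching the dimension bound in the statement. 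A direct computation gives $(n+\rho-1)/\rho \geq (\dim Y + (\rho-1) - 1)/(\rho - 1)$, so once $Y$ is shown to be a smooth Fano manifold with $\rho_Y = \rho_X - 1$ and $\iota_Y \geq \iota_X$, it will satisfy the numerical hypothesis of $\AGM^{\dim Y}_{\rho-1}$.

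The technical heart is then threefold: verify that $Y$ is a smooth Fano manifold with the stated invariants (using the classification of extremal contractions of high length in the spirit of Kawamata, Wi\'sniewski, Andreatta--Wi\'sniewski, and Occhetta); apply the inductive hypothesis $\AGM^{\dim Y}_{\rho-1}$ to classify $Y$ as one of the five products in Conjecture \ref{AMAGM}; and finally reconstruct $X$ from $\phi$ by analyzing a second extremal ray transverse to $R$ and invoking rigidity of Fano bundles over products of projective spaces and quadrics. The main obstacle is this last step: ensuring that $\phi$ actually decomposes as a product rather than a twisted projective or quadric bundle, which requires exploiting the numerical data of the high pseudoindex to show that any non-trivial twisting contradicts the Fano condition or violates Chern class constraints on each product factor of $Y$.

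For part \ref{mainAGMprop2}, Remark \ref{AGMrmk} handles $n \leq 3$ directly. For $n \in \{4,5\}$ and $\rho \in \{2,3\}$, the dimension bound in part \ref{mainAGMprop1} gives $n' \leq 3$, so the required $\AGM^{n'}_{\rho-1}$ is known (Miyaoka's theorem \cite{miyaoka} handles $\rho - 1 = 1$, and Remark \ref{AGMrmk} covers the remaining low-dimensional instances), and part \ref{mainAGMprop1} concludes. For $n \leq 5$ and $\rho \geq 4$, the generalized Mukai conjecture $\GM^n_\rho$ is known in dimension at most five by Remark \ref{AGMrmk}; combining $\rho_X(\iota_X - 1) \leq n$ with $\iota_X \geq (n+\rho-1)/\rho$ restricts $(\rho_X, \iota_X, n)$ to a short numerical list, each case verified using the equality case of $\GM^n_\rho$ (forcing products of projective spaces) or standard low-dimensional Fano classifications.
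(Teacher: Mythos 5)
Your high-level strategy (find a nice extremal contraction, pass to the base, classify it inductively, then reconstruct $X$ as a projective bundle) matches the paper's. But there are two genuine gaps.

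\textbf{Gap in part (1): you control only the general fiber.} Your argument produces a fiber-type contraction $\phi\colon X\to Y$ with $\dim F\geq l(R)-1$ for a \emph{general} fiber $F$, and you use this only to bound $\dim Y$. But for the inductive step you also need $Y$ to be a \emph{smooth} Fano manifold with $\rho_Y=\rho_X-1$ and $\iota_Y\geq\iota_X$, and you need $\phi$ to be a genuine $\pr^{\iota_X-1}$-bundle so that the reconstruction of $X$ can be carried out. Neither follows from knowledge of the general fiber alone: the base of a fiber-type Mori contraction is in general singular, and jumping fibers are exactly what ruin the $\pr^m$-bundle structure. What is actually required is an extremal ray $R$ such that \emph{every} fiber of $\phi_R$ has dimension $\leq\iota_X-1$; the Ionescu--Wi\'sniewski inequality then forces all fibers to have dimension exactly $l(R)-1=\iota_X-1$, and H\"oring--Novelli (\cite[Theorem 1.3]{HN}, via Theorem~\ref{bundle_criterion}) promotes $\phi_R$ to a $\pr^{\iota_X-1}$-fibration, giving smoothness of $Y$ and the invariants via Lemma~\ref{pnbdle_lemma}. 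Producing such a ray is the real content: the paper does it through a chain-of-rational-curves analysis (Construction~\ref{const}, Theorem~\ref{DUI_thm}, Propositions~\ref{AGMn2}, \ref{AGMn3}), showing all the families $V^i$ in Construction~\ref{const} are unsplit, which is nontrivial and uses Theorem~\ref{no_t2} and Casagrande's bound. Your ``applying Ionescu--Wi\'sniewski... forces a fiber-type contraction'' and the appeal to ``classification of extremal contractions of high length'' do not substitute for this; without the global bound on fiber dimension the bundle structure is not established.

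\textbf{Gap in part (2): the case $n=5$, $\rho=4$ is not dispatched by the stated numerics.} You are right that $\rho_X(\iota_X-1)\leq n$ together with $\iota_X\geq(n+\rho-1)/\rho$ pins down $(\rho_X,\iota_X)=(4,2)$ or $(5,2)$ when $n=5$, $\rho\geq 4$. The second gives $(\pr^1)^5$ by $\GM^5_5$; but the first ($\rho_X=4$, $\iota_X=2$, $n=5$) is a strict inequality case of the generalized Mukai bound and is not ``the equality case of $\GM$''. Classifying such $X$ is genuinely hard; the paper devotes Proposition~\ref{five} to it, using Construction~\ref{const}, Lemma~\ref{novlem}, Theorem~\ref{no_t2} and Casagrande's theorem to exclude non-unsplit minimal families before applying the inductive Proposition~\ref{induction_prop}. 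Your sketch leaves this case essentially unaddressed. (Minor point: you invoke Miyaoka for $\AGM^{n'}_1$, which is unnecessary since you only need $n'\leq 3$, where Remark~\ref{AGMrmk} already gives $\AGM^{n'}_\rho$ via the classification of Fano threefolds; the paper in fact explicitly avoids Miyaoka.)
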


\begin{remark}\label{miyaoka_rmk}
We do not use the deep result \cite{miyaoka} in order to prove Theorem \ref{mainthm} 
and Proposition \ref{mainAGMprop}. Obviously, if we combine 
Proposition \ref{mainAGMprop} and \cite[Theorem 0.1]{miyaoka}, then we can show 
that Conjecture $\AGM^n_\rho$ is true for $\rho\leq 3$ or $n\leq 5$. 
\end{remark}

The paper is organized as follows. 
In Sections \ref{famrat_section} and \ref{rcfib_section}, we recall definitions and some 
properties on families of rational curves and chains of rational $1$-cycles on 
Fano manifolds. The content is almost same as that of \cite[\S 2--3]{NO}. 
In Section \ref{vb_section}, we study some vector bundles on special Fano manifolds. 
This step seems crucial to consider inductive approach for proving Conjecture 
$\GM^n_\rho$ or $\AGM^n_\rho$. 
In Section \ref{rays_section}, we consider certain inductive step on $\rho$ to prove 
Conjecture $\GM^n_\rho$ or $\AGM^n_\rho$ under the additional assumption 
such that there exists a certain special extremal ray. This assumption seems 
strong, that is one of the reason why we cannot prove neither 
Conjecture $\GM^n_\rho$ nor $\AGM^n_\rho$ for general case. 
We show in Section \ref{domunsplit_section} that such an extremal ray do exists 
under the assumption that there exist many numerically independent 
dominating and unsplit families of rational curves. The argument 
is a standard technique for 
specialists; e.g., \cite[Lemma 4]{wisn91} and \cite[Theorem 1.1]{Occ}.  
We show in Section \ref{five_section} that, if $\rho\leq 3$ 
or $n\leq 5$, then there exist many numerically independent dominating and 
unsplit families of rational curves as in Section \ref{domunsplit_section}. 
In Section \ref{AM_section}, we prove Theorem \ref{mainthm} by using the techniques 
given in previous sections.

\smallskip

\noindent\textbf{Acknowledgements.}
The author thanks the referees for useful suggestions. 
The author is partially supported by JSPS Fellowships for Young Scientists.

\medskip

\noindent\textbf{Notation and terminology.}
We always work in the category of algebraic varieties 
(integral, separated and of finite type scheme) 
over the complex number field $\C$. 
For a normal projective variety $X$, we denote the normalization of the space of 
irreducible and reduced rational curves on $X$ by $\Rat(X)$ 
(see \cite[Definition II.2.11]{kollar}). 
For the theory of extremal contraction, we refer the readers to \cite{KoMo}. 
For a smooth projective variety $X$ and a $K_X$-negative extremal ray $R\subset\overline{\NE}(X)$,
\[
l(R):=\min\{(-K_X\cdot C)\mid C\text{ is a rational curve with } [C]\in R\}
\]
is called the \emph{length} $l(R)$ of $R$. 
The contraction morphism of $R$ is denoted by $\phi_R\colon X\rightarrow X_R$. 

For a morphism of varieties $f\colon X\rightarrow Y$, we define the 
\emph{exceptional locus} $\Exc(f)$ \emph{of $f$} by 
\[
\Exc(f):=\{x\in X\mid f \text{ is not an isomorphism around }x\}.
\]

For a complete variety $X$, an invertible sheaf $\sL$ 
on $X$ and for a nonnegative integer $i$, 
we denote the dimension of the $\C$-vector space $H^i(X,\sL)$ by $h^i(X,\sL)$. 
We also define $h^i(X,L)$ as $h^i(X,\sO_X(L))$
for a Cartier divisor $L$ on $X$. 

For a complete variety $X$, the Picard number of $X$ is denoted by $\rho_X$. 
For a complete variety $X$ and a closed subvariety $Y\subset X$, we denote the image 
of the homomorphism $\NC(Y)\rightarrow\NC(X)$ by $\NC(Y, X)$. 

For an algebraic scheme $X$ and a locally free sheaf of finite rank $\sE$ on $X$, 
let $\pr_X(\sE)$ be the projectivization of $\sE$ in the sense of Grothendieck 
and $\sO_\pr(1)$ be the tautological invertible sheaf. We usually denote 
the projection by $p\colon\pr_X(\sE)\rightarrow X$. 
We use the terms ``vector bundle" and ``locally free sheaf of finite rank" 
interchangeably. 
For a smooth projective variety $X$, let $T_X$ be the tangent bundle of $X$. 

The symbol $\Q^n$ means a smooth hyperquadric in $\pr^{n+1}$ for $n\geq 2$. 
We write $\sO_{\Q^n}(1)$ as the invertible sheaf 
which is the restriction of $\sO_{\pr^{n+1}}(1)$ under the natural embedding. 
We sometimes write $\sO(m)$ instead of $\sO_{\Q^n}(m)$ on $\Q^n$ 
(or $\sO_{\pr^n}(m)$ on $\pr^n$) for simplicity.

\medskip

\section{Families of rational curves}\label{famrat_section}

\smallskip

We recall the definition and properties of a family of rational curves for a fixed 
smooth projective variety. For detail, see \cite{kollar}. 

\begin{definition}\label{no_d1}
Let $X$ be a smooth projective variety. 
We define a \emph{family of rational curves} on $X$ 
to be an irreducible component $V\subset\Rat(X)$ with the induced 
universal family. 
For any $x\in X$, let $V_x$ be the subvariety of $V$ 
parameterizing rational curves passing through $x$. 
We define $\Locus(V)$ (resp.\ $\Locus(V_x)$) to be the union of rational curves 
parametrized by $V$ (resp.\ $V_x$). 
For a Cartier divisor $L$ on $X$, 
the intersection number $(L\cdot C)$ for any rational curve 
$C$ whose class belongs to $V$ is denoted by $(L\cdot V)$. 
We also denote by $[V]\in\NC(X)$ the numerical class of any rational curve among 
those parametrized by $V$. 

For a family $V$ of rational curves on $X$,
the family $V$ is said to be \emph{dominating} if $\overline{\Locus(V)}=X$, 
\emph{unsplit} if $V$ is projective, 
and \emph{locally unsplit} if $V_x$ is projective for a general $x\in\Locus(V)$.
If $V$ is a locally unsplit family, then $(-K_X\cdot V)\leq\dim X+1$ holds 
by \cite[Theorem 4]{mori}. 

If $X$ is a Fano manifold, then $X$ admits a dominating family of rational curves 
by \cite[Theorem 6]{mori}. If a dominating family $V$ of rational curves on $X$ satisfies 
that the intersection number $(-K_X\cdot V)$ is minimal among such $V$, then the 
family $V$ is called by a \emph{minimal dominating family} of $X$. 
We note that a minimal dominating family is locally unsplit. 
\end{definition}

\begin{definition}\label{no_d2}
Let $X$ be a Fano manifold, $U\subset X$ be an open subvariety and 
$\pi\colon U\rightarrow Z$ be a proper surjective morphism to a quasiprojective 
variety $Z$ of positive dimension. A family $V$ of rational curves on $X$ is a 
\emph{horizontal dominating family with respect to $\pi$} if $\Locus(V)$ dominates 
$Z$ and curves parametrized by $V$ are not contracted by $\pi$. 
We know that such a family always exists by \cite[Theorem 2.1]{KoMiMo}. 
A horizontal dominating family $V$ of rational curves on $X$ with respect to $\pi$ 
is  called a \emph{minimal horizontal dominating family} with respect to $\pi$ 
if the intersection number $(-K_X\cdot V)$ is minimal among such $V$. 
We note that a minimal horizontal dominating family is locally unsplit. 
\end{definition}

\begin{definition}\label{no_d3}
Let $X$ be a smooth projective variety. 
We define a \emph{Chow family $\sW$ of rational 
$1$-cycles} on $X$ 
to an irreducible component of the Chow variety $\Chow(X)$ of $X$ 
parameterizing rational and connected $1$-cycles. 
We define $\Locus(\sW)$ to be the union of the supports of $1$-cycles 
parametrized by $\sW$. We say that $\sW$ is a \emph{covering family} if 
$\Locus(\sW)=X$. 

For a family $V$ of rational curves on $X$, the closure of the image of $V$ in 
$\Chow(X)$ is denoted by $\sV$ and called the \emph{Chow family associated to $V$}. 
If $V$ is unsplit, then $V$ is the normalization of $\sV$ by \cite[II.2.11]{kollar}. 

For a family $V$ of rational curves on $X$, we say that $V$ (and also $\sV$) is 
\emph{quasi-unsplit} if any component of any reducible cycle parametrized by $\sV$ 
is numerically proportional to the class of curves parametrized by $V$. 

If families $V^1,\dots,V^k$ of rational curves on $X$ satisfy that 
the dimension of the vector space $\sum_{i=1}^k\R[V^i]$ in $\NC(X)$ is equal to $k$, 
then we say that $V^1,\dots,V^k$ are \emph{numerically independent}. 
\end{definition}

\begin{definition}\label{no_d5}
Let $X$ be a smooth projective variety, $V^1,\dots,V^k$ be families of rational 
curves on $X$ and $Y\subset X$ be a closed subvariety. 
We define 
\[
{\Locus(V^1)}_Y:={\bigcup}_{[C]\in V^1;Y\cap C\ne\emptyset}C,
\]
and we inductively define $\Locus(V^1,\dots,V^k)_Y:=
\Locus(V^k)_{\Locus(V^1,\dots,V^{k-1})_Y}$. 
Analogously, we define $\Locus(\sW^1,\dots,\sW^k)_Y$ for Chow families 
$\sW^1,\dots,\sW^k$ of rational $1$-cycles. 

For any point $x\in X$, 
we define $\Locus(V^1,\dots,V^k)_x:=\Locus(V^1,\dots,V^k)_{\{x\}}$. 
\end{definition}

The following assertions are well-known. We omit the proof. 

\begin{proposition}[{see \cite[Corollary IV.2.6]{kollar}}]\label{no_p1}
Let $X$ be a smooth projective variety, $V$ be a family of rational curves on $X$ 
and $x\in\Locus(V)$ be a $($closed$)$ point such that $V_x$ is projective. 
Then the dimension of any irreducible component of $\Locus(V_x)$ is 
bigger than or equal to 
\[
\dim X-\dim\Locus(V)+(-K_X\cdot V)-1.
\]
\end{proposition}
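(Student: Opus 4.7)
The plan is to combine a dimension count on the universal evaluation map with Mori's deformation bound and a bend-and-break rigidity argument.

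First, I would set up the universal family: let $\pi\colon U\to V$ be the natural $\pr^1$-bundle and $u\colon U\to X$ the evaluation morphism; since $V$ is irreducible, so is $U$, with $\dim U=\dim V+1$, and the closure of $u(U)$ equals $\Locus(V)$. By the theorem on dimensions of fibers applied to $u$, every irreducible component of $u^{-1}(x)$ has dimension at least $\dim V+1-\dim\Locus(V)$. Because $V$ parametrizes irreducible rational curves and $V_x$ is projective, every curve in $V_x$ is irreducible, so the restriction $\pi|_{u^{-1}(x)}\colon u^{-1}(x)\to V_x$ has finite fibers. Consequently each irreducible component of $V_x=\pi(u^{-1}(x))$ has dimension at least $\dim V+1-\dim\Locus(V)$. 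Combined with Mori's bound $\dim V\geq (-K_X\cdot V)+\dim X-3$, each irreducible component $V_x^{(i)}$ of $V_x$ satisfies
\[
\dim V_x^{(i)}\geq (-K_X\cdot V)+\dim X-\dim\Locus(V)-2.
\]

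Next, given an irreducible component $W$ of $\Locus(V_x)$, I would write $W=\Locus(V_x^{(i)})$ for some $i$, since an irreducible $V_x^{(i)}$ produces an irreducible $\Locus(V_x^{(i)})$. Thus $W$ is the image of the irreducible $\pr^1$-bundle $\pi^{-1}(V_x^{(i)})$, a variety of dimension $\dim V_x^{(i)}+1$. The remaining task is to show that the generic fiber of the induced evaluation $\pi^{-1}(V_x^{(i)})\to W$ is zero-dimensional. Over a general $y\in W\setminus\{x\}$, this fiber is, up to finite data, the set $V_{xy}^{(i)}$ of members of $V_x^{(i)}$ passing through both $x$ and $y$. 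Since $V_x^{(i)}$ is projective, $V_{xy}^{(i)}$ is a projective subscheme; were it positive-dimensional, a one-parameter subfamily would produce a non-constant family of rational curves with two fixed points, and bend-and-break would force one of its members to be reducible, contradicting irreducibility of curves in $V$. Hence $\dim V_{xy}^{(i)}=0$, the evaluation is generically finite, and
\[
\dim W=\dim V_x^{(i)}+1\geq (-K_X\cdot V)+\dim X-\dim\Locus(V)-1,
\]
which is the required estimate.

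The main obstacle is the bend-and-break step: one must correctly set up the two-pointed family and invoke properness of $V_x^{(i)}$ to exclude positive-dimensional families through two fixed points. This is exactly where the projectivity hypothesis on $V_x$ is essential. Once that ingredient is in hand, the rest of the argument is a straightforward application of fiber-dimension inequalities, the irreducibility of $V$, and Mori's estimate on $\dim V$.
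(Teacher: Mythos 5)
Your proof is correct and follows essentially the same route as the one the paper has in mind (following Koll\'ar, Corollary IV.2.6): the fiber-dimension theorem applied to $u^{-1}(x)$, Mori's lower bound $\dim V\geq \dim X+(-K_X\cdot V)-3$, the $\pr^1$-bundle structure of $\pi$, and bend-and-break together with the projectivity of $V_x$ to make the evaluation generically finite onto each component of $\Locus(V_x)$. The only difference is organizational: you bound the components of $u^{-1}(x)$ first and push the estimate forward through $V_x$ to $\Locus(V_x)$, whereas the paper's argument starts from a component $T$ of $\Locus(V_x)$ and chases back through $U_x$, $V_x$ and the fiber $i^{-1}(x)$; the ingredients and logical content coincide.
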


\begin{proposition}[{see \cite[Proposition 2]{NO}}]\label{no_p2}
Let $V$ be a dominating and locally unsplit family of rational curves on a smooth 
projective variety $X$ and $\sV$ be the associated Chow family. Assume that 
$\dim\Locus(V_x)\geq s$ for a general $x\in X$ and some integer $s$, then for any 
$x\in X$ every irreducible component of $\Locus(\sV)_x$ has dimension $\geq s$. 
\end{proposition}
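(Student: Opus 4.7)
The plan is to spread information about a general fibre to every fibre by exhibiting $\Locus(\sV)_x$ as the fibre over $x$ of the first projection from a single irreducible proper subvariety $\Gamma\subset X\times X$, and then invoking the standard lower bound $\dim(\text{component of fibre})\geq\dim\Gamma-\dim X$ for surjective morphisms from an irreducible variety. The upshot is that the dimension bound at a general $x$ then propagates uniformly to every $x$.

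Concretely, I would take the universal family $\pi\colon\sU\rightarrow\sV$ with its evaluation $i\colon\sU\rightarrow X$, form the fibre product $\sU\times_\sV\sU$ with its two projections $\mathrm{pr}_1,\mathrm{pr}_2\colon\sU\times_\sV\sU\rightarrow\sU$, and let $\Gamma\subset X\times X$ be the reduced image of the morphism $(i\circ\mathrm{pr}_1,\,i\circ\mathrm{pr}_2)\colon\sU\times_\sV\sU\rightarrow X\times X$. Since $\pi$ is a $\pr^1$-fibration over the open locus $V\subset\sV$ parametrizing irreducible cycles, the open subset $U\times_V U\subset\sU\times_\sV\sU$ is a $\pr^1\times\pr^1$-bundle over $V$ and hence irreducible and dense, which makes both $\sU\times_\sV\sU$ and $\Gamma$ irreducible; properness of $\pi$ and projectivity of $\sV$ then make $\Gamma$ closed in $X\times X$.

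A direct unravelling of the definitions shows that, for every $x\in X$, the fibre over $x$ of the first projection $q_1\colon\Gamma\rightarrow X$, $(x,y)\mapsto x$, satisfies $q_1^{-1}(x)=\{x\}\times\Locus(\sV)_x$, because $(x,y)\in\Gamma$ means precisely that some $v\in\sV$ has both $x$ and $y$ in its support. Dominance of $V$ makes $q_1$ surjective, and the inclusion $\Locus(V_x)\subset\Locus(\sV)_x$ together with the hypothesis $\dim\Locus(V_x)\geq s$ for general $x$ forces $\dim\Gamma\geq\dim X+s$. The fibre-dimension inequality then yields that every irreducible component of every fibre of $q_1$ has dimension at least $\dim\Gamma-\dim X\geq s$, which is exactly the desired statement about $\Locus(\sV)_x$.

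The point I expect to require the most care is the irreducibility of $\sU\times_\sV\sU$: a priori extra components could arise entirely over the Chow-boundary $\sV\setminus V$ where cycles become reducible and fibres of $\pi$ split. This is ruled out by the density of $U\times_V U$, but to sidestep the issue one can simply replace $\sU\times_\sV\sU$ in the definition of $\Gamma$ by the closure in $X\times X$ of the image of the (manifestly irreducible) open subset $U\times_V U$, without any other change to the argument. The local unsplitness of $V$ does not appear in the geometric argument itself; it enters only implicitly, through the hypothesis on $\dim\Locus(V_x)$ at a general $x$.
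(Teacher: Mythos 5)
The paper does not give a proof of this proposition; it cites it directly from~\cite{NO}, so I judge your argument on its own terms. Your strategy---packaging the Chow loci into the fibers of a single irreducible correspondence $\Gamma\subset X\times X$ and then applying the lower semicontinuity of fiber dimension---is a natural and efficient route, and most of the argument is sound. The hypothesis of local unsplitness indeed plays no role once the bound $\dim\Locus(V_x)\geq s$ at a general point is taken as input.

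The step that does not quite close, however, is precisely the one you flag as delicate, and your proposed repair changes the object in a way that breaks the fiber identification. If you set $\Gamma:=\overline{\text{im}\bigl(U\times_V U\to X\times X\bigr)}$ to guarantee irreducibility, you retain the inclusion $q_1^{-1}(x)\subset\{x\}\times\Locus(\sV)_x$, but the reverse inclusion is exactly what your argument needs and is no longer automatic: a pair $(x,y)$ with $x,y$ lying on a degenerate cycle $C_v$ for $v\in\sV\setminus V$ belongs to the new $\Gamma$ only if $(x,y)$ is a Hausdorff limit of pairs of points lying on a common irreducible curve parametrized by $V$. Equivalently, you need the support of the universal cycle $\sU\subset\sV\times X$ to coincide with the closure of its restriction over $V$ (no ``ghost'' components over the Chow boundary), or, what amounts to the same thing, that $\operatorname{Supp}(C_v)$ is contained in the Hausdorff limit of $\operatorname{Supp}(C_{v_t})$ for $v_t\in V$, $v_t\to v$. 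This is true---it follows from the fact that the Chow limit of a family of cycles of bounded degree has support equal to the Hausdorff limit of the supports, a standard consequence of the construction of the Chow variety and its universal cycle---but it is a genuine assertion that must be invoked explicitly. Without it, your fiber-dimension inequality only controls components of $q_1^{-1}(x)$, not all components of $\Locus(\sV)_x$, and the conclusion does not follow. The cleanest repair is to define $\Gamma$ via the universal cycle itself, $\Gamma:=\operatorname{pr}_{X\times X}\bigl(\sU\times_\sV\sU\bigr)$, which makes $q_1^{-1}(x)=\{x\}\times\Locus(\sV)_x$ tautological, and then to justify irreducibility of $\sU$ (hence of $\Gamma$) by the same Chow-theoretic fact, rather than to shrink $\Gamma$ and hope the fibers are unchanged.
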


\begin{lemma}[{see \cite[Lemma 5.4]{ACO}}]\label{no_l1}
Let $X$ be a smooth projective variety, $Y\subset X$ be a closed subvariety and 
$V^1,\dots,V^k$ be numerically independent unsplit families of rational curves on $X$. 
Assume that $(\sum_{i=1}^k\R[V^k])\cap\NC(Y, X)=0$ and 
$\Locus(V^1,\dots,V^k)_Y\neq\emptyset$. Then we have 
\[
\dim\Locus(V^1,\dots,V^k)_Y\geq\dim Y+\sum_{i=1}^k\left((-K_X\cdot V^i)-1\right).
\]
\end{lemma}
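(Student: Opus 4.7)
I would proceed by induction on $k$, the case $k=0$ being trivial since $\Locus()_Y = Y$.

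For the inductive step, I would set $Z := \Locus(V^1,\dots,V^{k-1})_Y$ and apply the inductive hypothesis to bound $\dim Z$. To then apply the single-step dimension estimate to the pair $(Z, V^k)$, one needs $[V^k] \notin \NC(Z, X)$. I would verify this via the inclusion
\[
\NC(Z, X) \;\subseteq\; \NC(Y, X) + \sum_{i=1}^{k-1} \R[V^i],
\]
proved by induction on $j$ for $Z_j := \Locus(V^j)_{Z_{j-1}}$: every positive-dimensional irreducible component of $Z_j$ is swept out by $V^j$-curves, and a standard ruled-surface computation expresses any curve class on such a component as a sum of a class from $\NC(Z_{j-1}, X)$ and a multiple of $[V^j]$. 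Combined with the hypothesis $(\sum_{i=1}^k \R[V^i]) \cap \NC(Y, X) = 0$ and the numerical independence of the $V^i$, this forces $[V^k] \notin \NC(Z, X)$.

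The induction thus reduces to the single-family claim: if $V$ is an unsplit family, $Z \subset X$ is closed with $\Locus(V)_Z \neq \emptyset$ and $[V] \notin \NC(Z, X)$, then $\dim \Locus(V)_Z \geq \dim Z + (-K_X \cdot V) - 1$. To establish this I would work with the universal $\pr^1$-fibration $p\colon U \to V$ and evaluation $e\colon U \to X$. Setting $U_Z := e^{-1}(Z)$, smoothness of $X$ gives $\dim U_Z \geq \dim V + 1 + \dim Z - \dim X$. Since $[V] \notin \NC(Z, X)$ forbids any $V$-curve from lying inside $Z$, the restriction $p|_{U_Z}$ has finite generic fiber over $V_Z := p(U_Z)$, so $\dim V_Z = \dim U_Z$. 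Mori's deformation bound $\dim V \geq \dim X + (-K_X \cdot V) - 3$ then yields $\dim p^{-1}(V_Z) \geq \dim Z + (-K_X \cdot V) - 1$, and the conclusion follows once $e$ is shown to be generically finite on $p^{-1}(V_Z)$.

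The main obstacle is precisely this last generic-finiteness statement. My plan is to argue by contradiction: if $e|_{p^{-1}(V_Z)}$ had a positive-dimensional generic fiber, then over a generic point $x \in \Locus(V)_Z$ one could extract a one-parameter family $\{v_t\}_{t \in T}$ of $V$-curves all passing through $x$ and all meeting $Z$, with intersection points $y_t \in C_{v_t} \cap Z$ tracing a curve on the ruled surface $\tilde U := p^{-1}(T)$. Comparing the constant section through $x$ with the (possibly multi-valued) section cut out by the $y_t$'s and pushing forward to $\NC(X)$ would yield a nonzero multiple of $[V]$ realized by a curve contained in $Z$, contradicting $[V] \notin \NC(Z, X)$. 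This ruled-surface computation is the only nontrivial ingredient; the rest assembles into the dimension inequality by the arithmetic above, following the template of \cite[Lemma 5.4]{ACO}.
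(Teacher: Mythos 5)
The paper states this lemma with a citation to [ACO, Lemma 5.4] and does not give a proof of its own, so there is no internal argument to compare against. Your reconstruction follows the standard route: induction on $k$, the numerical reduction via the inclusion $\NC(Z,X)\subseteq\NC(Y,X)+\sum_{i=1}^{k-1}\R[V^i]$ (which is precisely an iterated application of Lemma~\ref{no_l2}, already recorded in the paper, so you may simply cite it), and the dimension count on the universal family $p\colon U\to V$, $e\colon U\to X$. This architecture is sound and is essentially how the cited lemma is proved.

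The one genuine gap is in the final generic-finiteness step, which you correctly identify as the crux. Let $\sigma_1$ be the constant section over $T$ mapping to $x$ and let $\sigma_2$ be the multisection traced out by the points $y_t$, on a normalized ruled surface $\tilde U$ over (the normalization of) $T$. Writing $[\sigma_2]\equiv a[\sigma_1]+b[F]$ in $\NC(\tilde U)$ with $a>0$ and $[F]$ the fiber class, and using $e_*[\sigma_1]=0$, you obtain $e_*[\sigma_2]=b[V]$. Your stated contradiction needs $b\neq 0$, but $b$ can vanish: this happens exactly when $\sigma_2$ is also contracted by $e$, that is, when all the curves $C_{v_t}$ meet $Z$ at a single common point $z$. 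In that case no nonzero multiple of $[V]$ is produced inside $\NC(Z,X)$, and the contradiction must instead come from Mori's bend-and-break through two fixed points: a positive-dimensional family of $V$-curves passing through both $x$ and $z$ (note $x\neq z$, since a general point of $\Locus(V)_Z$ lies off $Z$ once no $V$-curve is contained in $Z$) forces a reducible or non-reduced degeneration, which unsplitness of $V$ forbids. So the argument needs an explicit dichotomy $b>0$ versus $b=0$ with two separate contradictions; as written, your sketch covers only the first case.
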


\begin{lemma}[{\cite[Lemma 4.1]{ACO}}]\label{no_l2}
Let $X$ be a smooth projective variety, $Y\subset X$ be a closed subvariety and 
$\sW$ be a Chow family of rational $1$-cycles on $X$. 
Then any curve in $\Locus(\sW)_Y$ 
is numerically equivalent to a linear combination of rational coefficients of curves 
in $Y$ and of irreducible components of cycles parametrized by $\sW$ which meet $Y$. 
\end{lemma}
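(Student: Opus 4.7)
The plan is to lift the given curve to the universal family of $\sW$ and extract a numerical relation from an auxiliary ruled surface. Let $p\colon\sU\to\sW$ and $q\colon\sU\to X$ be the universal family of the Chow family, set $\sW_Y:=\{w\in\sW\mid q(p^{-1}(w))\cap Y\ne\emptyset\}$ (a closed subset) and $\sU_Y:=p^{-1}(\sW_Y)$, so that $\Locus(\sW)_Y=q(\sU_Y)$. Given an irreducible curve $C\subset\Locus(\sW)_Y$, I would choose an irreducible component $\tilde C$ of $q^{-1}(C)\cap\sU_Y$ dominating $C$. If $p(\tilde C)$ reduces to a single point $w_{0}$, then $\tilde C\subset p^{-1}(w_{0})$ and $C=q(\tilde C)$ is already an irreducible component of the cycle $q(p^{-1}(w_{0}))$, which meets $Y$ because $w_{0}\in\sW_{Y}$; there is nothing to prove.

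Assume instead that $B:=p(\tilde C)$ is an irreducible curve in $\sW_{Y}$ and let $\sU_{B}$ be the irreducible component of $p^{-1}(B)$ containing $\tilde C$, which is a surface. After normalising $B$ and passing to the relative smooth minimal model of $\sU_{B}$ over the normalisation $\tilde B$ of $B$, I obtain a smooth projective surface $\pi\colon S\to\tilde B$ whose general fibre is $\pr^{1}$, together with an induced morphism $\bar q\colon S\to X$. The proper transform $\tilde C'\subset S$ of $\tilde C$ is a multisection of $\pi$ of some positive degree $n$. Since every fibre of $p|_{\sU_{B}}$ meets $q^{-1}(Y)$ (this is where $B\subset\sW_{Y}$ is used), the set $q^{-1}(Y)\cap\sU_{B}$ has an irreducible component dominating $B$; its proper transform $\Sigma\subset S$ is a multisection of $\pi$ of some positive degree $m$ with $\bar q(\Sigma)\subset Y$.

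Let $F$ denote a general fibre of $\pi$, so that $m[\tilde C']-n[\Sigma]$ has zero intersection with $F$. A classical fact for ruled surfaces---$S$ is obtained from a $\pr^{1}$-bundle over $\tilde B$ by finitely many point blow-ups, and the subspace of $\NC(S)_{\Q}$ orthogonal to $F$ is spanned by the classes of irreducible components of fibres of $\pi$---gives
\[
m[\tilde C']\equiv n[\Sigma]+\sum_{i}a_{i}[\Gamma_{i}]\quad\text{in }\NC(S)_{\Q},
\]
where the $\Gamma_{i}$ are irreducible components of fibres of $\pi$ and $a_{i}\in\Q$. Applying $\bar q_{*}$, the class $\bar q_{*}[\tilde C']$ is a positive multiple of $[C]$, the class $\bar q_{*}[\Sigma]$ is a $1$-cycle on $Y$, and each nonzero $\bar q_{*}[\Gamma_{i}]$ is a positive rational multiple of an irreducible component of the cycle $q(p^{-1}(w))$, where $w\in B\subset\sW_{Y}$ is the image of the fibre of $\pi$ containing $\Gamma_{i}$; that component is by construction a component of a cycle of $\sW$ meeting $Y$. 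Dividing by the positive coefficient of $[C]$ yields the desired $\Q$-linear combination.

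The main obstacle is the bookkeeping of the desingularisation and base change on $\sU_{B}$, together with the spanning statement for $F^{\perp}$ on $S$; both are standard pieces of surface theory, but one must check that the resulting proper transforms $\tilde C'$ and $\Sigma$ indeed furnish multisections of the required form and that the pushforward $\bar q_{*}$ does produce cycles of the two prescribed types.
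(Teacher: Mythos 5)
Your overall strategy — lift $C$ into the universal family over a curve $B\subset\sW_Y$, pass to a smooth ruled surface $\pi\colon S\to\tilde B$, use that the orthogonal complement of the fibre class is spanned by fibre components, and push forward by $\bar q$ — is the right one, and the surface-theoretic facts you invoke (a smooth surface fibred over a curve with generic fibre $\pr^1$ is a blow-up of a $\pr^1$-bundle, and $F^{\perp}$ is spanned by classes of irreducible fibre components) are correct.

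There is, however, a genuine gap at the step asserting that ``every fibre of $p|_{\sU_B}$ meets $q^{-1}(Y)$.'' What $B\subset\sW_Y$ gives you is that $p^{-1}(w)\cap q^{-1}(Y)\neq\emptyset$ for every $w\in B$, i.e.\ the \emph{whole} cycle $Z_w$ meets $Y$. But the fibre of $p|_{\sU_B}$ over $w$ is only $p^{-1}(w)\cap\sU_B$, which for reducible cycles $Z_w$ is a proper sub-curve of $p^{-1}(w)$ (the component $\sU_B$ is a single irreducible component of the possibly reducible surface $p^{-1}(B)$). That sub-curve can perfectly well avoid $Y$ for every $w\in B$: e.g.\ if each $Z_w$ has two components, one of which always carries the intersection with $Y$ and the other of which carries $\tilde C$. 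In that situation $q^{-1}(Y)\cap\sU_B$ does not dominate $B$ and there is no multisection $\Sigma$ of $\pi$ landing in $Y$, so the argument as written does not close.

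The missing ingredient is the connectedness of the cycles $Z_w$, which is part of the definition of the Chow family but is never used in your argument. It should be used to \emph{chain} through the irreducible components of $p^{-1}(B)$: because each fibre $p^{-1}(w)$ is connected, the component $\sU_B=\sU_B^{1}$ containing $\tilde C$ is linked, through a finite chain $\sU_B^{1},\sU_B^{2},\dots,\sU_B^{k}$ of components of $p^{-1}(B)$ with $\sU_B^{i}\cap\sU_B^{i+1}$ a curve $\Sigma_{i,i+1}$ dominating $B$, to a component $\sU_B^{k}$ in which $q^{-1}(Y)$ does dominate $B$. Applying your ruled-surface relation inside (a resolution of) each $\sU_B^{i}$ to the pair of horizontal multisections $\Sigma_{i-1,i}$, $\Sigma_{i,i+1}$ (with $\Sigma_{0,1}=\tilde C$ and $\Sigma_{k,k+1}$ the curve in $q^{-1}(Y)$), pushing forward by $\bar q$, and telescoping the resulting identities makes the intermediate classes $[\bar q(\Sigma_{i,i+1})]$ cancel, leaving an expression of $[C]$ as a $\Q$-linear combination of a curve in $Y$ and of classes of irreducible components of cycles $Z_w$ with $w\in B\subset\sW_Y$. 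Without this chaining the proof is incomplete. (Two smaller points: $\tilde C$ should be taken to be an irreducible \emph{curve} in a component of $q^{-1}(C)\cap\sU_Y$ dominating $C$, not the component itself, which may have higher dimension; and one should pass through a Stein factorisation of $\sU_B^{i}\to B$ before resolving, to guarantee an irreducible generic fibre.)
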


\begin{lemma}[{\cite[Corollary 1]{NO}}]\label{no_l3}
Let $X$ be a smooth projective variety, $V^1$ be a locally unsplit family 
of rational curves on $X$ and 
$V^2,\dots,V^k$ be unsplit families of rational curves on $X$. 
Then for a general $x\in\Locus(V^1)$, we have the following results. 
\begin{enumerate}
\renewcommand{\theenumi}{\alph{enumi}}
\renewcommand{\labelenumi}{\rm{(\theenumi)}}
\item\label{no_l31}
$\NC(\Locus(V^1)_x, X)=\R[V^1]$ holds.  
\item\label{no_l32}
If $\Locus(V^1,\dots,V^k)_x\neq\emptyset$, then $\NC(\Locus(V^1,\dots,V^k)_x, X)
=\sum_{i=1}^k\R[V^i]$ holds. 
\end{enumerate}
\end{lemma}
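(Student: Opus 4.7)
The strategy is to prove (a) directly using the universal-family $\pr^1$-bundle, and then to deduce (b) by induction on $k$ via Lemma~\ref{no_l2}.

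For (a), consider the basic diagram $U^1 \xrightarrow{q} X$, $U^1 \xrightarrow{p} V^1$, where $p$ is the universal $\pr^1$-fibration and $q$ is the evaluation morphism. The local unsplitness of $V^1$ ensures that, for a general $x \in \Locus(V^1)$, the fiber $V^1_x = p(q^{-1}(x))$ is projective; hence $W := p^{-1}(V^1_x)$ is a projective $\pr^1$-bundle over $V^1_x$ with $q(W) = \Locus(V^1)_x$, and the section $\sigma \colon V^1_x \hookrightarrow W$ corresponding to the marked preimage of $x$ is contracted by $q$ to the single point $x$. Given an arbitrary irreducible curve $C \subset \Locus(V^1)_x$, choose an irreducible component $\tilde{C} \subset q^{-1}(C)$ dominating $C$. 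Either $p(\tilde{C})$ is a point of $V^1_x$, in which case $\tilde{C}$ lies in a $\pr^1$-fiber of $p$ and maps finitely onto $C$, forcing $[C]$ to be a positive rational multiple of $[V^1]$; or $p(\tilde{C}) =: B$ is a curve in $V^1_x$, and passing to the normalization $\tilde{B} \to B$ the surface $S := p^{-1}(B)$ becomes a $\pr^1$-bundle over $\tilde{B}$ with Picard rank two. Writing $[\tilde{C}] = a[f] + b[\sigma_0]$ in the basis given by a fiber $[f]$ and the marked-point section $[\sigma_0]$, and applying $q_*$, the identities $q_*[f] = [V^1]$ and $q_*[\sigma_0] = 0$ (since $\sigma_0$ is contracted to $x$) yield $[C] \in \R[V^1]$ as well. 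The reverse inclusion is obvious, since any $V^1$-curve through $x$ is contained in $\Locus(V^1)_x$.

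For (b), induct on $k$, with base case (a). For the inductive step, set $Y := \Locus(V^1,\dots,V^{k-1})_x$, so that $\Locus(V^1,\dots,V^k)_x = \Locus(V^k)_Y$. By the inductive hypothesis $\NC(Y,X) = \sum_{i=1}^{k-1} \R[V^i]$. Since $V^k$ is unsplit, the associated Chow family $\sV^k$ parametrizes only irreducible cycles of class $[V^k]$, so Lemma~\ref{no_l2} applied with $\sW = \sV^k$ gives that every curve in $\Locus(V^k)_Y$ has numerical class in $\NC(Y,X) + \R[V^k] = \sum_{i=1}^k \R[V^i]$. This furnishes the inclusion $\NC(\Locus(V^1,\dots,V^k)_x, X) \subseteq \sum_{i=1}^k \R[V^i]$. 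The reverse inclusion is obtained by exhibiting, for each $i$, a curve of class $[V^i]$ inside $\Locus(V^1,\dots,V^k)_x$: for $i = k$ one takes any $V^k$-curve meeting $Y$, and for $i < k$ one uses that the (generically nonempty) intersection of $\Locus(V^k)_Y$ with $Y$ produces curves of the required classes via the inductive hypothesis.

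\textbf{Main obstacle.} The delicate point is Case~2 of (a): one must ensure that the marked-point section on the surface $S \to \tilde{B}$ genuinely contracts to $x$, which ultimately comes from the construction of $V^1_x$ via the marked preimage of $x$ in $U^1$, and that, after normalizing $B$, the numerical basis of $S$ is truly two-dimensional with the fiber class pushed to $[V^1]$. A second, more bookkeeping-level subtlety in the induction for (b) is verifying the reverse inclusion — this is routine whenever $Y \subset \Locus(V^k)_Y$ (as occurs, for example, when the $V^i$ are all dominating so that $\Locus(V^k) = X$), and this is the setting in which Lemma~\ref{no_l3} is applied in the sequel.
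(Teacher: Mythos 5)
The paper states this lemma with a citation to \cite[Corollary 1]{NO} and supplies no proof of its own, so there is no in-paper argument to compare against. The substance of your proof --- the universal-family/marked-section computation for (a), and for (b) the forward inclusion via iterated application of Lemma~\ref{no_l2} to $\sV^k$ --- is the standard route, and the $\subseteq$ halves of both parts are sound, modulo the normalization and multi-sheeted--section bookkeeping in (a) that you yourself flag.

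The genuine problem is that the reverse inclusion in (b) is not ``bookkeeping-level'': without some dominating hypothesis, the equality can actually \emph{fail}. For a concrete failure take $X = \Bl_p\pr^2$, let $V^1$ be the unsplit (hence locally unsplit) family of strict transforms of lines through $p$ (class $H-E$), and let $V^2$ be the one-point family of the rigid exceptional curve $E$. For general $x$, $\Locus(V^1)_x$ is the single $(H-E)$-curve $\ell_x$ through $x$; since $(H-E)\cdot E=1$, $\ell_x$ meets $E$, so $\Locus(V^1,V^2)_x = E \neq \emptyset$, yet $\NC(E,X)=\R[E]\subsetneq \R[H-E]+\R[E]$. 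Your sufficient condition $Y\subset\Locus(V^k)_Y$ is indeed the right fix, and it holds whenever $V^k$ is unsplit \emph{and} dominating (then $\Locus(V^k)$ is closed, hence equals $X$); that is the situation in every application of Lemma~\ref{no_l3} in this paper, where only the $\subseteq$ inclusion is ever used anyway. But the dominating hypothesis is not part of the lemma as stated, so the $\supseteq$ step cannot be called routine in the generality claimed: either the cited statement should be read as giving only $\subseteq$, or an extra hypothesis is implicit.
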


\medskip

\section{Rationally connected fibrations}\label{rcfib_section}

\smallskip

In this section, we recall the theory of rationally connected fibrations. 
For detail, see \cite{kollar} and \cite[\S 3]{NO}.

\begin{definition}[{see \cite[IV.4]{kollar}, \cite[\S 3]{ACO}}]\label{no_d7}
Let $X$ be a smooth projective variety, $Y\subset X$ be a closed subvariety, 
$m$ be a positive integer and 
$\sW^1,\dots,\sW^k$ be Chow families of rational $1$-cycles on $X$. 
We define $\ChLocus(\sW^1,\dots,\sW^k)_Y$ 
to be the set of points $y\in X$ such that 
there exist cycles $\Gamma_1,\dots,\Gamma_m$ with the following properties: 
\begin{enumerate}
\renewcommand{\theenumi}{\alph{enumi}}
\renewcommand{\labelenumi}{\rm{(\theenumi)}}
\item\label{no_d71}
The cycle $\Gamma_i$ belongs to one of the families $\sW^1,\dots,\sW^k$ 
for any $1\leq i\leq m$, 
\item\label{no_d72}
$\Gamma_i\cap\Gamma_{i+1}\neq\emptyset$ for any $1\leq i\leq m-1$,  
\item\label{no_d73}
$\Gamma_1\cap Y\neq\emptyset$ and $y\in\Gamma_m$. 
\end{enumerate}
For a point $x\in X$, we define $\ChLocus_m(\sW^1,\dots,\sW^k)_x
:=\ChLocus_m(\sW^1,\dots,\sW^k)_{\{x\}}$.

We say that two points $x$, $y\in X$ are \emph{$\rc(\sW^1,\dots,\sW^k)$-equivalent} 
if there exists $m\in\Z_{>0}$ such that $y\in\ChLocus_m(\sW^1,\dots,\sW^k)_x$. 

We say that $X$ is \emph{$\rc(\sW^1,\dots,\sW^k)$-connected} if 
$X=\ChLocus_m(\sW^1,\dots,\sW^k)_x$ holds 
for some $m$ and for some (hence any) $x\in X$. 
\end{definition}

\begin{thm}[{\cite[Theorem IV.4.16]{kollar}}]\label{no_t1}
Let $X$ be a smooth projective variety and 
$\sW^1,\dots,\sW^k$ be Chow families of rational $1$-cycles on $X$. 
Then there exists an open subvariety $X^0\subset X$ and a proper surjective morphism 
with connected fibers $\pi\colon X^0\rightarrow Z^0$ to a quasiprojective variety $Z^0$ 
such that the following holds:
\begin{itemize}
\item
The equivalence relation obtained by the $\rc(\sW^1,\dots,\sW^k)$-equivalence
restricts to an equivalence relation on $X^0$. 
\item
$\pi^{-1}(z)$ coincides with an $\rc(\sW^1,\dots,\sW^k)$-equivalence class for any 
$z\in Z^0$. 
\item
For any $z\in Z^0$ and $x$, $y\in\pi^{-1}(z)$, we have $y\in\ChLocus_m
(\sW^1,\dots,\sW^k)_x$ for some $m\leq 2^{\dim X-\dim Z^0}-1$. 
\end{itemize}
We call this morphism the \emph{$\rc(\sW^1,\dots,\sW^k)$-fibration} and often write 
$\pi\colon X\dashrightarrow Z$ for simplicity $($where $Z$ is a projective variety$)$. 
\end{thm}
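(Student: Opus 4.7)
The plan is to adapt the Kollár--Miyaoka--Mori construction of the maximal rationally chain connected fibration (the MRC fibration) to the situation where the ``connecting cycles'' are drawn from a finite collection of Chow families $\sW^1,\dots,\sW^k$ rather than from all rational curves. The overall strategy has two distinct parts: first produce the fibration $\pi\colon X^0 \to Z^0$ whose fibres are the $\rc(\sW^1,\dots,\sW^k)$-equivalence classes, then control the chain length $m$ with a doubling argument.

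For the existence of $\pi$, I would first combine the given Chow families into a single parameter space $\sW:=\sW^1\sqcup\cdots\sqcup\sW^k$ together with its universal cycle $\sU\to\sW$ and evaluation $\sU\to X$. For each positive integer $m$, I would form the iterated fibre product
\[
\sU_m \;:=\; \sU\times_X\sU\times_X\cdots\times_X\sU \quad (m\text{ factors, consecutive cycles meeting}),
\]
so that the two evaluation maps $\sU_m\rightrightarrows X$ send a chain $(\Gamma_1,\dots,\Gamma_m)$ together with $x\in\Gamma_1$ and $y\in\Gamma_m$ to the pair $(x,y)$. Let $R_m\subseteq X\times X$ be the image, which is a constructible set by Chevalley's theorem, and set $R:=\bigcup_m R_m$. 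After symmetrizing, $R$ is exactly the graph of the $\rc$-equivalence relation. Noetherian induction gives an $M$ with $\overline{R_M}=\overline{R_{M+j}}$ for all $j\geq 0$, so the closure of $R$ is algebraic.

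Next I would invoke the general quotient theorem for proper, prerelation-type equivalence relations (the machinery in \cite[IV.4]{kollar}, ultimately based on Campana's construction or a Hilbert-scheme argument): restricting to the open subvariety $X^0\subset X$ over which the relation has the expected dimension and the chain-of-cycles family is equidimensional yields a proper surjective morphism $\pi\colon X^0\to Z^0$ with connected fibres equal to the equivalence classes. The main technical obstacle here, and the step I expect to require the most care, is verifying that the ``reachability'' pre-relation descends to an \emph{equivalence relation with proper quotient} on $X^0$; this is where one must exploit that cycles in a Chow family move in a proper family and that a general point of $X$ has an equivalence class of locally constant dimension.

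Finally, for the exponential bound $m\leq 2^{\dim X-\dim Z^0}-1$, I would use a doubling trick. Observe that
\[
\ChLocus_{2m}(\sW^1,\dots,\sW^k)_x \;\supseteq\; \bigcup_{y\,\in\,\ChLocus_m(\sW^1,\dots,\sW^k)_x}\ChLocus_m(\sW^1,\dots,\sW^k)_y.
\]
Consequently, either $\ChLocus_{2m}(\cdots)_x=\ChLocus_m(\cdots)_x$, in which case the right-hand side already exhausts the equivalence class of $x$, or the dimension strictly increases when passing from $m$ to $2m$. Starting from $\ChLocus_1(\cdots)_x$ of dimension at least $1$ (assuming the class of $x$ is nontrivial) and applying doublings $1\mapsto 2\mapsto 4\mapsto\cdots\mapsto 2^{d-1}$, where $d:=\dim X-\dim Z^0$ is the fibre dimension, the process must saturate after at most $d$ doublings; summing the geometric series $1+2+\cdots+2^{d-1}=2^d-1$ yields the claimed bound, which concludes the proof.
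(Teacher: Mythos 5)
The paper does not prove this statement --- it is quoted as Theorem IV.4.16 of Koll\'ar's book --- so there is no internal proof to compare against; what follows is an assessment of your sketch on its own terms. The high-level plan (Koll\'ar's quotient construction for proper prerelations, specialized to chains from finitely many Chow families) is the right one, but the existence half of your argument is essentially circular. After assembling the iterated fibre products $\sU_m$ and the constructible relations $R_m\subseteq X\times X$, you ``invoke the general quotient theorem for proper, prerelation-type equivalence relations'' --- but that quotient theorem \emph{is} the substance of Theorem IV.4.16; producing an open $X^0$ over which the reachability relation is proper and admits a geometric quotient $\pi$ is where all of the work lies, and it is being assumed rather than proved.

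The doubling argument for the bound also has two concrete problems. The dichotomy ``either $\ChLocus_{2m}(\cdots)_x=\ChLocus_m(\cdots)_x$ or the dimension strictly increases'' is false without further input: $\ChLocus_{2m}(\cdots)_x$ may gain new irreducible components of the same dimension, so dimension alone does not detect strict growth, and one needs an irreducibility or genericity argument (for general $x$ and general $y\in\ChLocus_m(\cdots)_x$, the inclusion $\ChLocus_m(\cdots)_y\subseteq\ChLocus_{2m}(\cdots)_x$ should be shown to force equality once dimensions agree). Moreover the arithmetic at the end is inconsistent with the process you describe: applying $m\mapsto 2m$ from $m=1$ produces the chain lengths $1,2,4,\dots,2^{d-1}$, whose maximum is $2^{d-1}$, not the partial sum $1+2+\cdots+2^{d-1}=2^{d}-1$; invoking a ``geometric series'' here is a non sequitur. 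The stated bound $2^{\dim X-\dim Z^{0}}-1$ more naturally arises from a recursion of the form $m_{i+1}=2m_i+1$ starting at $m_0=0$, which requires a concatenation step you have not spelled out.
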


\begin{proposition}[{see \cite[Corollary 4.4]{ACO}}]\label{no_p3}
Let $X$ be a smooth projective variety and 
$\sW^1,\dots,\sW^k$ be Chow families of rational $1$-cycles on $X$. 
If $X$ is $\rc(\sW^1,\dots,\sW^k)$-connected, then $\NC(X)$ is spanned by the 
classes of irreducible components of cycles in $\sW^1,\dots,\sW^k$. 
In particular, if $\sW^i$ is the Chow family associated to some quasi-unsplit family 
$W^i$ of rational curves on $X$ for any $1\leq i\leq k$, then $\rho_X\leq k$ 
and equality holds if and only if $W^1,\dots,W^k$ are numerically independent. 
\end{proposition}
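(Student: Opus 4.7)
I will let $N\subseteq\NC(X)$ denote the $\R$-subspace spanned by the numerical classes of irreducible components of cycles parametrized by $\sW^1,\dots,\sW^k$. The plan for the first assertion is to prove that every irreducible curve in $X$ has class in $N$; the second assertion will then follow by using the quasi-unsplit hypothesis to collapse $N$ onto $\sum_{i=1}^k\R[W^i]$ and counting dimensions.

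To prove $\NC(X)=N$, I will build the nested chain of closed subsets $Y_0:=\{x\}$ and $Y_m:=\bigcup_{i=1}^k\Locus(\sW^i)_{Y_{m-1}}$ for $m\geq 1$ (each $Y_m$ is closed since Chow families are proper), and establish by induction on $m$ that every irreducible curve contained in $Y_m$ has numerical class in $N$. The base case $m=0$ is vacuous. For the induction step, an arbitrary irreducible curve $C\subseteq Y_m$ must lie in $\Locus(\sW^i)_{Y_{m-1}}$ for some $i$, and applying Lemma \ref{no_l2} to each irreducible component of $Y_{m-1}$ will express $[C]$ as a $\Q$-linear combination of classes of curves inside $Y_{m-1}$---which lie in $N$ by the inductive hypothesis---and classes of irreducible components of cycles of $\sW^i$ meeting $Y_{m-1}$---which lie in $N$ tautologically. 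Since $X$ is $\rc(\sW^1,\dots,\sW^k)$-connected, one has $X=\ChLocus_m(\sW^1,\dots,\sW^k)_x\subseteq Y_m$ for some $m$, and so every irreducible curve of $X$ has class in $N$, yielding $\NC(X)=N$.

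For the ``in particular'' clause, the quasi-unsplit assumption on each $W^i$ (Definition \ref{no_d3}) forces every irreducible component of every cycle of $\sW^i$ to be numerically proportional to $[W^i]$, so the first assertion yields $N=\sum_{i=1}^k\R[W^i]$ and hence $\rho_X=\dim_\R\NC(X)\leq k$. Equality holds precisely when $[W^1],\dots,[W^k]$ are $\R$-linearly independent in $\NC(X)$, which is exactly numerical independence.

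I do not foresee a serious obstacle: the argument is essentially bookkeeping once Lemma \ref{no_l2} is in hand. The one subtle point is to carry out the induction on the honestly closed sets $Y_m$, relying on the properness of the Chow families to guarantee closedness, rather than trying to work directly with the sets $\ChLocus_m(\sW^1,\dots,\sW^k)_x$ appearing in the definition of $\rc$-connectedness.
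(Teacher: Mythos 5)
The paper does not give a proof of this proposition; it defers to \cite[Corollary 4.4]{ACO}. Your argument is correct and is essentially the standard proof in the literature: an induction on chain length using the properness of the Chow families to keep each locus closed, Lemma \ref{no_l2} to express curve classes as rational combinations, and the quasi-unsplit hypothesis to collapse the spanning set to $\{[W^1],\dots,[W^k]\}$. One remark worth making explicit: when you say a curve $C\subseteq Y_m=\bigcup_i\Locus(\sW^i)_{Y_{m-1}}$ lies in a single $\Locus(\sW^i)_{Y_{m-1}}$, you are invoking irreducibility of $C$ together with closedness of each $\Locus(\sW^i)_{Y_{m-1}}$; you flag the closedness, but the appeal to irreducibility of $C$ is the real point, so it deserves a word. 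Similarly, when passing from $\rc$-connectedness to $X\subseteq Y_m$, one chain of length $\leq m$ can be padded (by repeating the last cycle) to one of length exactly $m$, so the inclusion $\ChLocus_m(\sW^1,\dots,\sW^k)_x\subseteq Y_m$ suffices as you use it. These are minor presentational points; the proof is sound.
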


\begin{thm}[{cf.\ \cite[Theorem 2]{NO}}]\label{no_t2}
Let $X$ be a Fano manifold and $V$ be a dominating and locally unsplit 
family of rational curves on $X$. 
Assume that $X$ is $\rc(\sV)$-connected and $(-K_X\cdot V)<3\iota_X$ holds. 
\begin{enumerate}
\renewcommand{\theenumi}{\arabic{enumi}}
\renewcommand{\labelenumi}{\rm{(\theenumi)}}
\item\label{no_t21}
If $V$ is a minimal dominating family and $(-K_X\cdot V)>\dim X+1-\iota_X$, 
then $\rho_X=1$. 
\item\label{no_t22}
If $(-K_X\cdot V)>\dim X+1-\iota_X$, 
then $\rho_X\leq 2$. 
\item\label{no_t23}
If $(-K_X\cdot V)\geq\dim X+1-\iota_X$ and $\iota_X\geq 2$, 
then $\rho_X\leq 3$. 
\end{enumerate}
\end{thm}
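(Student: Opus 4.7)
The plan is to exploit the bound $(-K_X\cdot V)<3\iota_X$ to control the splitting of reducible cycles in $\sV$, and to combine this with the lower bound $(-K_X\cdot V)\geq\dim X+1-\iota_X$ via Propositions \ref{no_p1} and \ref{no_p2} and Lemmas \ref{no_l1} and \ref{no_l3} to bound $\rho_X$. First I analyze reducible cycles of $\sV$. Any such $\Gamma=\Gamma_1+\cdots+\Gamma_r$ satisfies $\sum_{i=1}^r(-K_X\cdot\Gamma_i)=(-K_X\cdot V)$ with each $(-K_X\cdot\Gamma_i)\geq\iota_X$, so the hypothesis forces $r\leq 2$. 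In the case $r=2$, each $(-K_X\cdot\Gamma_i)<2\iota_X$, hence the family $W^{(i)}$ containing $[\Gamma_i]$ is unsplit, in particular quasi-unsplit. If $V$ itself is already quasi-unsplit, then $\rc(\sV)$-connectedness combined with Proposition \ref{no_p3} immediately gives $\rho_X\leq 1$, settling all three parts.

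Assume now that $V$ is not quasi-unsplit, so some reducible $\Gamma_1+\Gamma_2\in\sV$ has a component class $[W^{(1)}]$ not proportional to $[V]$. For a general $x\in X$, Proposition \ref{no_p1} gives $\dim\Locus(V_x)\geq(-K_X\cdot V)-1$, and Lemma \ref{no_l3}(a) gives $\NC(\Locus(V_x),X)=\R[V]$, so $\R[W^{(1)}]\cap\NC(\Locus(V_x),X)=0$. Granting that $\Locus(V_x)$ meets $\Locus(W^{(1)})$, Lemma \ref{no_l1} with $Y=\Locus(V_x)$ yields
\[
\dim\Locus(V,W^{(1)})_x\geq\bigl((-K_X\cdot V)-1\bigr)+\bigl((-K_X\cdot W^{(1)})-1\bigr)\geq(-K_X\cdot V)+\iota_X-2.
\]
For parts (1) and (2), the strict bound $(-K_X\cdot V)>\dim X+1-\iota_X$, i.e.\ $(-K_X\cdot V)\geq\dim X+2-\iota_X$, makes the right side at least $\dim X$, so $\Locus(V,W^{(1)})_x=X$, and Lemma \ref{no_l3}(b) yields $\NC(X)=\R[V]+\R[W^{(1)}]$, proving $\rho_X\leq 2$ in (2). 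For (1), the equality $\Locus(V,W^{(1)})_x=X$ is incompatible with the minimality of $V$: minimality forces $W^{(1)}$ to be non-dominating (otherwise $(-K_X\cdot W^{(1)})\geq(-K_X\cdot V)$, contradicting $(-K_X\cdot W^{(1)})\leq(-K_X\cdot V)-\iota_X$), yet $\Locus(V,W^{(1)})_x\subset\Locus(W^{(1)})\subsetneq X$; so the assumption that $V$ is not quasi-unsplit must fail and $\rho_X=1$. For (3), the non-strict bound only gives $\dim\Locus(V,W^{(1)})_x\geq\dim X-1$, leaving room for this locus to be a prime divisor; using $\iota_X\geq 2$ one may extend further by another component family $W^{(j)}$ of some reducible cycle meeting this divisor (whose existence is guaranteed by $\rc(\sV)$-connectedness), and a second application of Lemma \ref{no_l1} together with Lemma \ref{no_l3}(b) produces $\NC(X)\subset\R[V]+\R[W^{(1)}]+\R[W^{(j)}]$, so $\rho_X\leq 3$.

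The main obstacle I expect is verifying the required nonempty intersections of loci, in particular $\Locus(V_x)\cap\Locus(W^{(1)})\neq\emptyset$ for the general $x$ chosen above, even though reducible cycles of $\sV$ may occupy a proper closed subset of $X$. I would handle this by choosing $x$ on $\Locus(W^{(1)})$ (nonempty once a reducible cycle exists) and invoking Proposition \ref{no_p2} (valid at every $x\in X$) to replace $\Locus(V_x)$ by $\Locus(\sV)_x$ when dimension estimates are needed at a special point, or alternatively by using $\rc(\sV)$-connectedness directly to build a chain of cycles from $\Locus(V_x)$ to $\Locus(W^{(1)})$ while tracking the numerical classes introduced along the chain via Lemma \ref{no_l2}.
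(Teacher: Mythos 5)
Your overall strategy for parts (1) and (2) matches the paper's: bound the number of components of reducible cycles, show the component families are unsplit, and use Lemmas \ref{no_l1} and \ref{no_l3} to produce a locus of dimension $\geq\dim X$ containing the information needed to bound $\rho_X$. But two issues need to be addressed.

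First, the nonemptiness of $\Locus(V_x)\cap\Locus(W^{(1)})$, which you correctly flag as the main obstacle, is not resolved. Your first fallback (choose $x\in\Locus(W^{(1)})$) does not work as stated: Lemma \ref{no_l3}(\ref{no_l31}), which gives $\NC(\Locus(V_x),X)=\R[V]$, is only valid for $x$ \emph{general} in $\Locus(V)$, whereas $\Locus(W^{(1)})$ may well be a proper closed subset missing all general points. Your second fallback (build a chain from $x$ to $\Locus(W^{(1)})$ via $\rc(\sV)$-connectedness, tracking classes with Lemma \ref{no_l2}) is in fact exactly what the paper does: one fixes a chain $\Gamma_1,\dots,\Gamma_m$ from the general point $x$, takes the \emph{minimal} index $j_0$ at which a reducible $\Gamma_{j_0}$ has components not proportional to $[V]$, sets $x_1$ accordingly (either $x$ or a linking point $\Gamma_{j_0-2}\cap\Gamma_{j_0-1}$), and takes $Y$ to be an irreducible component of $\Locus(V_{x_1})$ meeting $\Gamma_{j_0}$; minimality of $j_0$ together with Lemma \ref{no_l2} then guarantees $\NC(Y,X)=\R[V]$, and $Y$ meets the unsplit family $W$ through $\Gamma_{j_0}^1$ by construction. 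You need to actually carry this out.

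Second, and more seriously, your treatment of part (3) has a genuine gap. After obtaining a divisor $D=\Locus(V,W^{(1)})_x$ with $\NC(D,X)=\R[V]+\R[W^{(1)}]$, you assert the existence of a further component family $W^{(j)}$ with class independent of $[V],[W^{(1)}]$ whose locus meets $D$, claiming this is ``guaranteed by $\rc(\sV)$-connectedness.'' It is not. Proposition \ref{no_p3} does give you \emph{some} component class outside $\R[V]+\R[W^{(1)}]$ if $\rho_X\geq 3$, but nothing forces the corresponding locus to intersect $D$: chains from $x$ may pass through many cycles with classes in $\R[V]+\R[W^{(1)}]$ before reaching a new class, and the intermediate linking points need not lie in $D$. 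The paper sidesteps this entirely by invoking Casagrande's theorem \cite[Theorem 1.2]{casagrande}: a prime divisor in a Fano manifold with $\dim\NC(D,X)\leq 2$ already forces $\rho_X\leq 3$. This is a substantive external input that your proof neither uses nor replaces. Without it, or without a concrete argument producing the required $W^{(j)}$ meeting $D$, your proof of (3) is incomplete.
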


\begin{proof}
The proof is almost same as that of \cite[Theorem 2]{NO}. 

Fix a general point $x\in X$. There exists $m\in\Z_{>0}$ such that 
$X=\ChLocus_m(\sV)_x$ since $X$ is $\rc(\sV)$-connected.
Since $(-K_X\cdot V)<3\iota_X$, any reducible cycle $\Gamma$ of $\sV$ has only 
two irreducible components. Hence either both of them are numerically proportional 
to $[V]\in\NC(X)$ or neither of them is numerically proportional to $[V]\in\NC(X)$. 

If any irreducible component of an $m$-chain $\Gamma_1\cup\dots\cup\Gamma_m$ 
which satisfies 
\begin{enumerate}
\renewcommand{\theenumi}{\roman{enumi}}
\renewcommand{\labelenumi}{\rm{(\theenumi)}}
\item\label{no_t201}
$x\in\Gamma_1$ and 
\item\label{no_t202}
$\Gamma_i\cap\Gamma_{i+1}\neq\emptyset$ for any $1\leq i\leq m-1$
\end{enumerate}
is numerically proportional to $[V]\in\NC(X)$, 
then $\rho_X=1$ by Proposition \ref{no_p3}. 

We can assume that there exists an $m$-chain $\Gamma_1\cup\dots\cup\Gamma_m$ 
which satisfies the properties \eqref{no_t201}, \eqref{no_t202} and there exists 
an integer $1\leq j\leq m$ such that the irreducible components $\Gamma_j^1$ 
and $\Gamma_j^2$ of $\Gamma_j$ are not numerically proportional to $[V]\in\NC(X)$. 
Let $1\leq j_0\leq m$ be the minimum integer for which such a chain exists. 
We have $j_0\geq 2$ since $x\in X$ is general. 
If $j_0=2$ then set $x_1:=x$, otherwise let $x_1\in X$ be a point in 
$\Gamma_{j_0-2}\cap\Gamma_{j_0-1}$. Take an irreducible component $Y$ of 
$\Locus(V_{x_1})$ which meets $\Gamma_{j_0}$. We can assume 
that $\Gamma_{j_0}^1\cap Y\neq\emptyset$. 
We know that $\NC(Y, X)=\R[V]$ by Lemma \ref{no_l2} and the minimality of $j_0$. 
Take a family $W$ of rational curves on $X$ such that the class of $\Gamma_{j_0}^1$ 
is in $W$. Then $W$ is unsplit by the property $(-K_X\cdot V)<3\iota_X$. 
By Lemma \ref{no_l1}, Propositions \ref{no_p1} and \ref{no_p2}, 
we have $\dim\Locus(W)_Y\geq\dim Y+(-K_X\cdot W)-1\geq (-K_X\cdot V)+\iota_X-2$. 

\eqref{no_t21}
We have $\Locus(W)_Y=X$ since $(-K_X\cdot V)>\dim X+1-\iota_X$. 
In particular, $W$ is a dominating family. However, this leads to a contradiction 
since $V$ is a minimal dominating family and $(-K_X\cdot V)>(-K_X\cdot W)$. 
Thus $\rho_X=1$. 

\eqref{no_t22}
We have $\Locus(W)_Y=X$ by the same reason. 
We know that $\NC(\Locus(W)_Y, X)=\R[V]+\R[W]$ by Lemma \ref{no_l2}. 
Thus $\rho_X\leq 2$. 

\eqref{no_t23}
We have $\Locus(W)_Y$ is a divisor or equal to $X$ and 
$\NC(\Locus(W)_Y, X)=\R[V]+\R[W]$ by the same reason. 
If $\Locus(W)_Y$ is equal to $X$, then $\rho_X\leq 2$. 
If $\Locus(W)_Y$ is a divisor, then $\rho_X\leq 3$ by \cite[Theorem 1.2]{casagrande}. 
\end{proof}

We recall the following argument due to Novelli and Occhetta.

\begin{construction}[{\cite[Construction 1]{NO}}]\label{const}
Let $X$ be a Fano manifold. 
Take a minimal dominating family $V^1$ of rational curves on $X$. 
If $X$ is not $\rc(\sV^1)$-connected, take a minimal horizontal dominating 
family $V^2$ of rational curves on $X$ with respect to the $\rc(\sV^1)$-fibration 
$\pi^1\colon X\dashrightarrow Z^1$. 
If $X$ is not $\rc(\sV^1, \sV^2)$-connected, take a minimal horizontal dominating 
family $V^3$ of rational curves on $X$ with respect to the $\rc(\sV^1, \sV^2)$-fibration 
$\pi^2\colon X\dashrightarrow Z^2$, and so on. Since $\dim Z^{i+1}<\dim Z^i$, 
for some integer $k$ we have that $X$ is $\rc(\sV^1,\dots,\sV^k)$-connected. 
We note that the families $V^1,\dots,V^k$ are numerically independent by construction. 
\end{construction}

\begin{lemma}[{see \cite[Lemma 4]{NO}}]\label{no_l4}
Let $X$ be a Fano manifold with $\iota_X\geq 2$ and $V^1,\dots,V^k$ be families of 
rational curves as in Construction \ref{const}. Then we have 
\begin{eqnarray*}
\dim X & \geq & \sum_{i=1}^k\dim\left((\pi^i)^{-1}\left(\pi^i(x_i)\right)\right)\geq
\sum_{i=1}^k\dim\Locus(V^i)_{x_i}\\
 & \geq & \sum_{i=1}^k\left(\dim X-\dim\Locus(V^i)+(-K_X\cdot V^i)-1\right)\geq
\sum_{i=1}^k\left((-K_X\cdot V^i)-1\right)
\end{eqnarray*}
for any general $x_i\in\Locus(V^i)$. 
\end{lemma}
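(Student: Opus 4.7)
The plan is to verify the chain of four inequalities from right to left.

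The rightmost inequality is immediate: for each $i$ we have $\dim X \geq \dim \Locus(V^i)$ since $\Locus(V^i)$ is a subvariety of $X$, and summing these term by term yields the claim.

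The next inequality follows from Proposition \ref{no_p1}, applied term by term. By Construction \ref{const}, each $V^i$ is either the minimal dominating family $V^1$ or a minimal horizontal dominating family with respect to some $\pi^{i-1}$; in either case $V^i$ is locally unsplit, so for a general point $x_i\in\Locus(V^i)$ the subvariety $V^i_{x_i}$ is projective, and Proposition \ref{no_p1} supplies the bound $\dim \Locus(V^i)_{x_i}\geq \dim X-\dim\Locus(V^i)+(-K_X\cdot V^i)-1$.

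For the third inequality from the right, I would establish the term-wise inclusion $\Locus(V^i)_{x_i}\subseteq(\pi^i)^{-1}(\pi^i(x_i))$. Indeed, any rational curve in $V^i$ passing through $x_i$ represents a $1$-cycle in the Chow family $\sV^i$, so each of its points is $\rc(\sV^1,\dots,\sV^i)$-equivalent to $x_i$ and therefore lies in the fiber of the $\rc(\sV^1,\dots,\sV^i)$-fibration $\pi^i$ at $\pi^i(x_i)$. Taking dimensions gives the desired bound.

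The leftmost inequality is the main obstacle. I would exploit the tower $X=Z^0\dashrightarrow Z^1\dashrightarrow\cdots\dashrightarrow Z^k=\{\mathrm{pt}\}$ produced by the iterated $\rc$-fibrations of Construction \ref{const}, together with the telescoping identity $\dim X=\sum_{i=1}^k(\dim Z^{i-1}-\dim Z^i)$. The delicate point is to match each summand $\dim((\pi^i)^{-1}(\pi^i(x_i)))$ appearing in the statement with the relative fiber contribution of step $i$ in this tower; this uses crucially that $V^i$ is horizontal with respect to $\pi^{i-1}$, so curves in $V^i$ descend to non-constant curves in $Z^{i-1}$ and account for genuinely new fiber dimension at step $i$ rather than being absorbed into an earlier step of the $\rc$-tower. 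Combining the telescoping identity with this identification yields the bound by $\dim X$.
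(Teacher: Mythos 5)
Your treatment of the last two inequalities is fine: the rightmost follows term by term from $\dim X\geq\dim\Locus(V^i)$, and the third from Proposition \ref{no_p1} applied to the locally unsplit family $V^i$ at a general $x_i$. The difficulty is in the first two, and your proposal is internally inconsistent there.

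For the second inequality you read $\pi^i$ literally as the $\rc(\sV^1,\ldots,\sV^i)$-fibration $X\dashrightarrow Z^i$ from Construction \ref{const}; with that reading the containment $\Locus(V^i)_{x_i}\subseteq(\pi^i)^{-1}(\pi^i(x_i))$ is indeed correct. But the same reading makes the leftmost inequality \emph{false} whenever $k\geq 2$: since $\Locus(V^i)$ dominates $Z^i$, a general $x_i\in\Locus(V^i)$ maps to a general point of $Z^i$, so $\dim\bigl((\pi^i)^{-1}(\pi^i(x_i))\bigr)=\dim X-\dim Z^i$; and because $Z^k$ is a point, the $i=k$ term alone already equals $\dim X$, with $k-1$ further strictly positive terms. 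So the "matching" with the telescoping contribution $\dim Z^{i-1}-\dim Z^i$ that your sketch for the first inequality relies on cannot be a matching with $\dim X-\dim Z^i$; you cannot simultaneously keep the literal reading for the second inequality and prove the first.

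What actually has to happen is that the middle term be interpreted as the dimension of the general fiber of the induced rational map $Z^{i-1}\dashrightarrow Z^i$ (with $Z^0=X$), namely $\dim Z^{i-1}-\dim Z^i$; these telescope to $\dim X$, which is the entire content of the leftmost inequality. The second inequality then becomes nontrivial: one must show that for a general $x_i$, the restriction of $\pi^{i-1}$ to $\Locus(V^i)_{x_i}$ is generically finite onto its image, so that $\dim\Locus(V^i)_{x_i}=\dim\pi^{i-1}\bigl(\Locus(V^i)_{x_i}\bigr)\leq\dim Z^{i-1}-\dim Z^i$. This finiteness is the genuine core of the lemma and uses the \emph{minimality} of $V^i$ among horizontal dominating families, not merely horizontality as your sketch suggests; your remark that curves of $V^i$ descend to non-constant curves in $Z^{i-1}$ is far short of finiteness of $\pi^{i-1}$ on the whole locus. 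The paper itself does not reprove the lemma but cites \cite[Lemma 4]{NO}, where this finiteness step is carried out; your proposal leaves it open and the rest of the argument cannot be made to close without it.
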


\begin{lemma}[{\cite[Lemma 4.5]{novelli}}]\label{novlem}
Let $X$ be a Fano manifold with $\iota_X\geq 2$ and $V^1,\dots,V^k$ be families of 
rational curves as in Construction \ref{const}. Assume that at least one of these 
families, say $V^j$, is non-unsplit. Then $k(\iota_X-1)\leq\dim X-\iota_X$. Moreover, 
\begin{enumerate}
\renewcommand{\theenumi}{\alph{enumi}}
\renewcommand{\labelenumi}{\rm{(\theenumi)}}
\item\label{novlem1}
if $j=(\dim X-\iota_X)/(\iota_X-1)$, then $j=k$ 
and $\rho_X(\iota_X-1)=\dim X-\iota_X$; 
\item\label{novlem1}
if $j=(\dim X-\iota_X-1)/(\iota_X-1)$, then $j=k$ 
and either $\rho_X(\iota_X-1)=\dim X-\iota_X-1$, 
or $\iota_X=2$ and $\rho_X=\dim X-2$. 
\end{enumerate}
\end{lemma}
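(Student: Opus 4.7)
The plan is to combine the non-unsplit hypothesis with the dimension estimate of Lemma \ref{no_l4}, then analyze the equality cases. First, the non-unsplit assumption on $V^j$ means $\sV^j$ contains a reducible cycle, whose components are rational curves each of anticanonical degree at least $\iota_X$; hence $(-K_X \cdot V^j) \geq 2\iota_X$, while for $i \neq j$ we only have $(-K_X \cdot V^i) \geq \iota_X$. Substituting into the final inequality of Lemma \ref{no_l4}:
$$\dim X \geq \sum_{i=1}^k ((-K_X \cdot V^i) - 1) \geq (k-1)(\iota_X - 1) + (2\iota_X - 1) = k(\iota_X - 1) + \iota_X,$$
which rearranges to the desired $k(\iota_X - 1) \leq \dim X - \iota_X$.

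For part (a), the hypothesis $j(\iota_X - 1) = \dim X - \iota_X$ combined with the trivial $j \leq k$ and the just-proved main inequality forces $j = k$. Equality throughout the above chain forces $(-K_X \cdot V^i) = \iota_X$ for $i < k$, from which each such $V^i$ must in fact be unsplit (else $(-K_X \cdot V^i) \geq 2\iota_X$, contradicting the equality), and $(-K_X \cdot V^k) = 2\iota_X$, so that any reducible cycle in $\sV^k$ has exactly two components, each of degree $\iota_X$. It remains to bound $\rho_X$. Since $X$ is $\rc(\sV^1, \dots, \sV^k)$-connected by Construction \ref{const}, Proposition \ref{no_p3} says $\NC(X)$ is spanned by the classes of all irreducible components appearing in cycles of $\sV^1, \ldots, \sV^k$: namely $[V^1], \ldots, [V^{k-1}], [V^k]$, together with classes $[\Gamma_1], [\Gamma_2]$ of the pairs appearing in reducible cycles, constrained by $[\Gamma_1] + [\Gamma_2] = [V^k]$. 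These pairs contribute at most one new class, giving $\rho_X \leq k+1$; the core of case (a) is ruling out this extra dimension to conclude $\rho_X = k$ and hence $\rho_X(\iota_X - 1) = \dim X - \iota_X$.

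For part (b), one unit of slack in the dimension chain permits either (i) a single $(-K_X \cdot V^i)$ to exceed its minimum by one while still forcing $\rho_X = k$, giving $\rho_X(\iota_X - 1) = \dim X - \iota_X - 1$; or (ii) the broken components $[\Gamma_1], [\Gamma_2]$ to contribute an extra class, giving $\rho_X = k+1$. The main obstacle, in both (a) and (b), is controlling the new class: one needs to show that a broken component of degree $\iota_X$ must either be vertical with respect to the rc-fibration $\pi^{k-1}$ (whence its class lies in $\sum_{i<k}\R[V^i]$, contributing nothing new) or give a family whose $\Locus$ is forced by Lemmas \ref{no_l1}, \ref{no_l2} and Proposition \ref{no_p1} to be all of $X$, contradicting minimality of $V^k$ as a horizontal dominating family. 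A careful dimension count along these lines reveals that case (ii) can only occur when $\iota_X = 2$, producing the exceptional conclusion $\rho_X = \dim X - 2$. This delicate bookkeeping of where an extra numerical class can arise is the technical heart of the argument, and is essentially Novelli's contribution in \cite[Lemma 4.5]{novelli}.
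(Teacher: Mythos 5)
Your derivation of the main inequality $k(\iota_X-1)\leq\dim X-\iota_X$ from Lemma \ref{no_l4} is correct, as is the deduction $j=k$ in case (a). But the rest of the proposal is a plan, not a proof. You identify the key issue yourself when you write that "the core of case (a) is ruling out this extra dimension" and that the rest "is essentially Novelli's contribution" --- and indeed the paper itself does not prove this statement but cites \cite[Lemma 4.5]{novelli} verbatim. Since your task was to supply the argument, these deferrals leave genuine gaps.

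Concretely, three steps are asserted but not established. First, the intermediate bound $\rho_X\leq k+1$ is not justified: $\sV^k$ may contain several distinct reducible cycles, and their components need not all span a single extra direction together with $[V^k]$; it must be argued (e.g.\ via Lemma \ref{no_l2} and the rc-connectedness in Proposition \ref{no_p3}) that only one new class can arise. Second, in case (b) the conclusion $j=k$ is never derived; the chain gives $(k-j)(\iota_X-1)\leq 1$, which for $\iota_X=2$ only yields $k\leq j+1$, so a separate argument for $\iota_X=2$ is required before one can even set up the two alternatives. Third, the proposed dichotomy --- a broken component of $\sV^k$ either lies in $\sum_{i<k}\R[V^i]$ or produces a horizontal dominating family of degree $\iota_X<(-K_X\cdot V^k)$, contradicting minimality of $V^k$ --- is the right idea, but the supporting dimension count using Lemma \ref{no_l1} and Proposition \ref{no_p1} is not carried out, nor is it shown why a component contracted by $\pi^{k-1}$ must have class in $\sum_{i<k}\R[V^i]$ (rc-fibrations do not automatically have this property; it needs the bounded-length chains from Theorem \ref{no_t1} together with Lemma \ref{no_l2}). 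Without these computations the conclusions $\rho_X(\iota_X-1)=\dim X-\iota_X$ in (a), and the alternative $\iota_X=2$, $\rho_X=\dim X-2$ in (b), remain unproved.
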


\medskip

\section{Special vector bundles}\label{vb_section}

\smallskip

In this section, we consider vector bundles on some special Fano manifolds 
whose projectivizations are also Fano manifolds with large pseudoindices. 

\begin{definition}\label{pnbdle_dfn}
A morphism $f\colon X\rightarrow Y$ is called a \emph{$\pr^m$-fibration} if 
$f$ is a proper and smooth morphism such that the scheme theoretic fiber of $f$ is 
isomorphic to $\pr^m$ for any (closed) point in $Y$.
\end{definition}

The following lemma in \cite{BCDD} is fundamental. 

\begin{lemma}[{\cite[Lemme 2.5 (a)]{BCDD}}]\label{pnbdle_lemma}
Let $f\colon X\rightarrow Y$ be a $\pr^m$-fibration between smooth projective 
varieties. 
If $X$ is a Fano manifold, then $Y$ is also a Fano manifold and $\iota_Y\geq\iota_X$ holds. 
\end{lemma}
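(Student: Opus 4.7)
My plan is to deduce both conclusions from the positivity of $-K_Y\cdot C$ for every irreducible curve $C\subset Y$, with a sharper bound when $C$ is rational. Fix such a $C$ with normalization $\nu\colon\tilde C\to C$ and form the fiber product $W:=X\times_Y\tilde C$ with projections $\pi\colon W\to\tilde C$ and $\pi_X\colon W\to X$. Since $\pi$ is a smooth $\pr^m$-fibration over a smooth curve, Tsen's theorem gives $W=\pr_{\tilde C}(\sE)$ for some rank-$(m+1)$ vector bundle $\sE$ on $\tilde C$. The morphism $\pi_X$ is finite and birational onto $f^{-1}(C)$, and because $W$ is smooth it realizes the normalization of $f^{-1}(C)$; pullback of the ample class $-K_X$ under this finite morphism is therefore ample on $W$. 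Combining flat base change with the relative Euler sequence gives $-K_{W/\tilde C}=(m+1)\xi'$ for $\xi':=c_1(\sO_W(1))-\tfrac{1}{m+1}\pi^*c_1(\sE)$, and hence
\[
\pi_X^*(-K_X)=(m+1)\xi'+\pi^*\nu^*(-K_Y).
\]

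The Mori cone of a projective bundle over a smooth curve is two-dimensional: $\overline{\NE}(W)=\R_{\geq 0}[\ell]+\R_{\geq 0}[\sigma_{\min}]$, where $\ell$ is a line in a fiber and $[\sigma_{\min}]$ is the horizontal extremal class satisfying $\pi_*[\sigma_{\min}]=[\tilde C]$ and $c_1(\sO_W(1))\cdot\sigma_{\min}=\mu_{\min}(\sE)$ (Miyaoka's theorem on projective bundles). Consequently $\xi'\cdot\sigma_{\min}=\mu_{\min}(\sE)-\mu(\sE)\leq 0$. Pairing the ample class $\pi_X^*(-K_X)$ with $[\sigma_{\min}]$ via Kleiman's criterion yields
\[
0<\pi_X^*(-K_X)\cdot\sigma_{\min}=(m+1)\bigl(\mu_{\min}(\sE)-\mu(\sE)\bigr)+(-K_Y\cdot C),
\]
so $-K_Y\cdot C>(m+1)\bigl(\mu(\sE)-\mu_{\min}(\sE)\bigr)\geq 0$ for every irreducible $C\subset Y$. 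A second application of Kleiman's criterion on $Y$ shows $-K_Y$ is ample, i.e.\ $Y$ is Fano.

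For $\iota_Y\geq\iota_X$, assume additionally that $C$ is rational. Then $\tilde C\simeq\pr^1$, Grothendieck's splitting theorem gives $\sE=\bigoplus_{i=0}^{m}\sO(a_i)$, and $\mu_{\min}(\sE)=\min_i a_i$ is realized by an honest section $\sigma_0\subset W$ corresponding to the minimum summand. Its image $\tilde B:=\pi_X(\sigma_0)$ is an irreducible rational curve on $X$ with $\pi_X^*(-K_X)\cdot\sigma_0=(-K_X\cdot\tilde B)\geq\iota_X$, and the displayed identity rearranges to
\[
-K_Y\cdot C=(-K_X\cdot\tilde B)+(m+1)\bigl(\mu(\sE)-\mu_{\min}(\sE)\bigr)\geq\iota_X,
\]
giving the index bound. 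The main non-routine ingredient is the description of $\overline{\NE}(W)$ for projective bundles over smooth curves; once that is invoked, the rest is direct intersection theory using flat base change, the relative Euler sequence, and Kleiman.
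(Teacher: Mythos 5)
The paper itself does not prove this lemma; it cites \cite[Lemme~2.5~(a)]{BCDD} directly, so there is no in-paper proof to compare against. Your reconstruction follows the standard route (base change to the normalization of a curve $C\subset Y$, trivialize the Brauer class via Tsen to get $W=\pr_{\tilde C}(\sE)$, compute $\pi_X^*(-K_X)$, and pair against the minimal horizontal extremal class) and the intersection-theory identity as well as the derivation of $\iota_Y\geq\iota_X$ are correct.

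There is, however, a genuine gap in the Fano conclusion. You prove $(-K_Y\cdot C)>0$ for every irreducible curve $C\subset Y$ and then write that Kleiman's criterion gives ampleness of $-K_Y$. That step is not valid as stated: Kleiman requires strict positivity on $\overline{\NE}(Y)\setminus\{0\}$, and strict positivity against every irreducible curve does \emph{not} in general imply strict positivity on the closed cone (this is precisely the content of Mumford's classical examples of nef-but-not-ample divisors positive on every curve). To close the gap you need to control the boundary of $\overline{\NE}(Y)$: since $f=\phi_R$ is the contraction of the $K_X$-negative extremal ray $R$ generated by lines in fibers, the cone and contraction theorems on the Fano manifold $X$ give that $\overline{\NE}(X)$ is rational polyhedral with each extremal ray spanned by a rational curve, and that $\overline{\NE}(Y)=f_*\overline{\NE}(X)$; hence $\overline{\NE}(Y)$ is also rational polyhedral with each extremal ray spanned by the image of such a rational curve, and your inequality on curve classes does then imply positivity on all of $\overline{\NE}(Y)\setminus\{0\}$, so Kleiman applies. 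Without this observation the ampleness step is not justified. A minor side remark: the description of $\overline{\NE}(W)$ via $\mu_{\min}(\sE)$ is correct, but it is more standardly attributed to Hartshorne's characterization of ample/nef bundles on curves than to Miyaoka.
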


We give a sufficient condition that a given $\pr^m$-fibration is isomorphic to 
a projective space bundle. 

\begin{proposition}\label{brauer_prop}
Let $f\colon X\rightarrow Y$ be a $\pr^m$-fibration 
between smooth projective varieties. 
If $Y$ is a rational variety, i.e., birational to a projective space, then $f$ is 
a projective space bundle.  
More precisely, there exists a locally free sheaf $\sE$ of rank $m+1$ 
on $Y$ such that $X$ is isomorphic to $\pr_Y(\sE)$ over $Y$.
\end{proposition}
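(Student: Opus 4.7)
The plan is to translate the question about $\mathbb{P}^m$-fibrations into a question about the Brauer group of $Y$. The first step is to observe that a proper smooth morphism whose scheme-theoretic fibers are all isomorphic to $\mathbb{P}^m$ is automatically étale-locally trivial. This is standard: $\mathbb{P}^m$ is infinitesimally rigid (its automorphism scheme $\Aut(\mathbb{P}^m)=\PGL_{m+1}$ is smooth, and $H^1(\mathbb{P}^m, T_{\mathbb{P}^m})=0$), so deformation-theoretic considerations together with effectivity of formal isomorphisms identify $f$ with a $\PGL_{m+1}$-torsor. Equivalently, $[f]$ defines a class in $H^1_{\etale}(Y, \PGL_{m+1})$, and $f$ is the Brauer--Severi scheme attached to this torsor.

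Next, one applies the standard non-abelian boundary map coming from the short exact sequence of étale sheaves
\[
1\longrightarrow\Gm\longrightarrow\GL_{m+1}\longrightarrow\PGL_{m+1}\longrightarrow 1,
\]
namely
\[
\delta\colon H^1_{\etale}(Y,\PGL_{m+1})\longrightarrow H^2_{\etale}(Y,\Gm).
\]
By classical descent, $f$ admits a lift to a $\GL_{m+1}$-torsor, i.e.\ $X\simeq\pr_Y(\sE)$ for some rank $m+1$ locally free sheaf $\sE$ on $Y$, if and only if $\delta([f])=0$. The image of $\delta$ lies in the cohomological Brauer group $\operatorname{Br}'(Y)=H^2_{\etale}(Y,\Gm)$, which for smooth varieties over $\C$ coincides with the ordinary Brauer group. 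Thus the problem is reduced to showing $\operatorname{Br}(Y)=0$.

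For this, one invokes the birational invariance of the Brauer group of smooth projective varieties over $\C$. Concretely, via the exponential sequence, $\operatorname{Br}(Y)$ injects into $H^3(Y,\Z)_{\mathrm{tors}}$ once one knows $H^2(Y,\sO_Y)=0$; since $Y$ is rational, $h^{2,0}(Y)$ vanishes by the birational invariance of the Hodge numbers $h^{p,0}$, and $H^3(Y,\Z)$ is torsion-free because (by weak factorization) $Y$ is connected to $\pr^n$ by a chain of blow-ups and blow-downs along smooth centers, each of which preserves torsion-freeness of $H^3(\cdot,\Z)$, while $H^3(\pr^n,\Z)=0$. Together with $\operatorname{Br}(\pr^n)=0$, this gives $\operatorname{Br}(Y)=0$, hence $\delta([f])=0$, hence $X\simeq\pr_Y(\sE)$.

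The only delicate point is the vanishing $\operatorname{Br}(Y)=0$ for a smooth projective rational variety; everything else is formal descent. In practice this can be cited as a well-known consequence of either the birational invariance of the (analytic) Brauer group for smooth projective varieties or of Grothendieck's results on Brauer--Severi schemes, so no serious obstacle remains once one is willing to invoke these standard facts.
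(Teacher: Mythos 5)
Your proposal is correct and follows essentially the same route as the paper: identify the $\pr^m$-fibration with a class in $H^1_\etale(Y,\PGL_{m+1})$, note that the obstruction to lifting it to $H^1_\etale(Y,\GL_{m+1})$ lives in $H^2_\etale(Y,\Gm)$, and observe that this group vanishes for a smooth projective rational variety over $\C$. The only difference is that the paper simply cites \cite{CS} for the vanishing of the cohomological Brauer group, whereas you sketch a self-contained argument via the exponential sequence, $h^{2,0}=0$, and torsion-freeness of $H^3(Y,\Z)$.
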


\begin{proof}
Since $Y$ is a smooth projective rational variety, the cohomological Brauer group 
$H^2_{\etale}(Y, \Gm)$ of $Y$ is equal to zero (see for example \cite[\S 5]{CS}). 
Thus the homomorphism 
$H^1_\etale(Y, \GL_{m+1})\rightarrow H^1_\etale(Y, \PGL_{m+1})$ 
is surjective. 
\end{proof}

We introduce the notion of minimal horizontal curves of projective space bundles 
over rational curves. The idea focusing on those curves 
has been already obtained in \cite[\S 2]{BCDD}. 

\begin{definition}\label{hor_section}
Let $Y$ be a smooth projective variety, let $\sE$ be a locally free sheaf on $Y$ of 
rank $m+1$ and let $X:=\pr_Y(\sE)$ with the projection $p\colon X\rightarrow Y$. 
Let $C\subset Y$ be a rational curve with the normalization morphism 
$\nu\colon \pr^1\rightarrow C\hookrightarrow Y$. 
Consider the fiber product 
\[
\begin{CD}
\pr_{\pr^1}(\nu^*\sE)       @>{\nu'}>>  X                        \\
@V{p'}VV                                        @V{p}VV               \\
\pr^1                             @>>{\nu}>  Y.
\end{CD}
\]
There exists an isomorphism 
\[
\nu^*\sE\simeq\bigoplus_{0\leq i\leq m}\sO_{\pr^1}(a_i)
\]
with $a_0\leq\dots\leq a_m$. 
Let $C'\subset\pr_{\pr^1}(\nu^*\sE)$ be the section of $p'$ corresponds to 
the canonical projection 
\[
\nu^*\sE\simeq\bigoplus_{0\leq i\leq m}\sO_{\pr^1}(a_i)\twoheadrightarrow\sO_{\pr^1}(a_0)
\]
and let $C^{p, 0}\subset X$ be the image of $C'$ in $X$. We call this $C^{p,0 }$ 
a \emph{minimal horizontal curve of $p$ over $C$}. 
The choice of $C^{p, 0}$ is not unique in general. However, we have 
\begin{eqnarray}\label{hor_eqn}
(-K_X\cdot C^{p, 0})
=(-K_Y\cdot C)-\sum_{i=1}^m(a_i-a_0)
\end{eqnarray}
since $(-K_X\cdot C^{p, 0})=
(p^*(\sO_Y(-K_Y)\otimes(\det\sE)^\vee)\otimes\sO_\pr(m+1)\cdot C^{p, 0})$, 
$\deg(\det(\nu^*\sE))=\sum_{i=0}^ma_i$ 
and $(\sO_\pr(1)\cdot C')_{\pr_{\pr^1}(\nu^*\sE)}=a_0$. 
This value does not depend on the choice of $C^{p, 0}$. 
\end{definition}

\begin{lemma}\label{vb_lift}
Let $Z$ be a smooth projective variety and $Y:=\pr^m\times Z$ $($we allow the case 
$Z$ is a point$)$. We write the projections $p_1\colon Y\rightarrow\pr^m$ and 
$p_2\colon Y\rightarrow Z$. 
Let $\sE$ be a locally free sheaf on $Y$ of rank $m+1$ and $X:=\pr_Y(\sE)$ 
with the projection $p\colon X\rightarrow Y$. Assume that $X$ is a Fano manifold 
with $\iota_X\geq m+1$. Then there exist an integer $a$ and a locally free sheaf 
$\sE_Z$ on $Z$ of rank $m+1$ such that 
$\sE\simeq p_1^*\sO_{\pr^m}(a)\otimes p_2^*\sE_Z$ holds. Moreover, 
$X_Z:=\pr_Z(\sE_Z)$ satisfies that $X\simeq X_Z\times\pr^m$. In particular, 
$X_Z$ is also a Fano manifold with $\iota_{X_Z}\geq m+1$. 
\end{lemma}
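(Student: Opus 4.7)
The plan is to control the splitting type of $\sE$ along horizontal lines in the $\pr^m$-direction, deduce that $\sE$ is a twist of a pullback from $Z$, and then read off the product structure.

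First, I would fix a point $z\in Z$, a line $\ell\subset\pr^m$, and let $C:=\ell\times\{z\}\subset Y$. Restricting $\sE$ to $C\simeq\pr^1$, write $\sE|_C\simeq\bigoplus_{i=0}^m\sO_{\pr^1}(a_i)$ with $a_0\leq\dots\leq a_m$. A minimal horizontal curve $C^{p,0}\subset X$ of $p$ over $C$ exists, and applying formula \eqref{hor_eqn} together with $(-K_Y\cdot C)=m+1$ (computed on $Y=\pr^m\times Z$) gives
\[
m+1\leq\iota_X\leq(-K_X\cdot C^{p,0})=(m+1)-\sum_{i=1}^m(a_i-a_0).
\]
Since every $a_i-a_0\geq 0$, I obtain $a_0=a_1=\dots=a_m$. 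Call this common value $a_{z,\ell}$; since it equals $\deg(\sE|_C)/(m+1)$ and degrees are locally constant in the flat family of such curves $C$ parametrized by the connected variety $(\text{lines in }\pr^m)\times Z$, the integer $a:=a_{z,\ell}$ is independent of $(z,\ell)$.

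Next, set $\sF:=\sE\otimes p_1^*\sO_{\pr^m}(-a)$. For every fiber $\pr^m\times\{z\}$ the bundle $\sF|_{\pr^m\times\{z\}}$ has trivial splitting type $(0,\dots,0)$ on every line, so it is a uniform bundle of trivial type and hence isomorphic to $\sO_{\pr^m}^{\oplus(m+1)}$ (this is the easiest instance of the classical uniform-bundle theorem: after twisting the global sections to make $h^0\geq m+1$, the evaluation map gives a generically injective, hence iso, map $\sO_{\pr^m}^{\oplus(m+1)}\to\sF|_{\pr^m\times\{z\}}$). In particular $h^0(\pr^m\times\{z\},\sF|_{\pr^m\times\{z\}})=m+1$ and $h^i=0$ for $i>0$, uniformly in $z$. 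Cohomology and base change then gives that $\sE_Z:=(p_2)_*\sF$ is locally free of rank $m+1$ on $Z$ and that the natural base-change map
\[
p_2^*\sE_Z\longrightarrow\sF
\]
is an isomorphism (it is an isomorphism on each fiber of $p_2$). Consequently $\sE\simeq p_1^*\sO_{\pr^m}(a)\otimes p_2^*\sE_Z$.

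Finally, since projectivization is invariant under line-bundle twists, $X=\pr_Y(\sE)\simeq\pr_Y(p_2^*\sE_Z)\simeq\pr_Z(\sE_Z)\times_Z Y=X_Z\times\pr^m$. Because $X$ is Fano, so is each factor of this product; and from $\iota_X=\min(\iota_{X_Z},\iota_{\pr^m})=\min(\iota_{X_Z},m+1)\geq m+1$ we conclude $\iota_{X_Z}\geq m+1$.

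The main obstacle is the middle step, namely verifying that a rank $m+1$ bundle on $\pr^m$ whose restriction to every line is $\sO(a)^{\oplus(m+1)}$ is itself $\sO(a)^{\oplus(m+1)}$. This is the only ``non-formal'' ingredient; everything else is bookkeeping via formula \eqref{hor_eqn}, flatness, and Grauert's theorem.
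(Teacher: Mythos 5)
Your approach is essentially the same as the paper's: use the minimal horizontal curve formula \eqref{hor_eqn} on lines $\ell\times\{z\}$ to force $\sE|_{\ell\times\{z\}}$ to have constant splitting type $\sO(a)^{\oplus(m+1)}$, twist by $p_1^*\sO(-a)$ to get a bundle trivial on all such lines, invoke uniform-bundle triviality on each $\pr^m$-fiber, and then use cohomology-and-base-change to push down to $Z$. The one step you flag as a gap --- that a rank $m+1$ bundle on $\pr^m$ which is trivial on every line is itself trivial --- is exactly where the paper cites \cite[Proposition (1.2)]{AW}; your sketched evaluation-map argument is on the right track but does not actually establish $h^0\geq m+1$ (which requires the standard ``lines through a fixed point'' argument, or the cited reference), so you should just invoke that result rather than re-prove it.
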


\begin{proof}
Pick any (closed) point $z\in Z$ and any line $l\subset p_2^{-1}(z)(=\pr^m)\subset Y$. 
Then we have $\sE|_l\simeq\bigoplus_{0\leq i\leq m}\sO_{\pr^1}(a)$ for some $a\in\Z$ 
by the equation \eqref{hor_eqn} and the properties $(-K_Y\cdot l)=m+1$ and 
$(-K_X\cdot l^{p, 0})\geq m+1$. The integer $a$ does not depend on the choices of $z$ and 
$l$ since the value $(\det\sE\cdot l)=(m+1)a$ is independent of the 
choices of $z$ and $l$. 
Thus $\sE':=\sE\otimes p_1^*\sO_{\pr^m}(-a)$ satisfies that 
$\sE'|_l\simeq\sO_{\pr^1}^{\oplus m+1}$ for any (closed) point $z\in Z$ and 
any line $l\subset p_2^{-1}(z)\subset Y$. 
Thus $\sE'|_{p_2^{-1}(z)}\simeq\sO_{\pr^m}^{\oplus m+1}$ by \cite[Proposition (1.2)]{AW}. 
We have $h^0({p_2^{-1}(z)}, \sE'|_{p_2^{-1}(z)})=m+1$ and 
$h^1({p_2^{-1}(z)}, \sE'|_{p_2^{-1}(z)})=0$. 
Hence $\sE_Z:=(p_2)_*\sE'$ is a locally free sheaf on $Z$ of rank $m+1$ and 
$p_2^*\sE_Z\simeq\sE'$ holds by the cohomology and base change theorem. Therefore 
we have $\sE\simeq p_1^*\sO_{\pr^m}(a)\otimes p_2^*\sE_Z$. 
The remaining assertions are trivial. 
\end{proof}

\begin{corollary}\label{vb_gm}
Let $Y:=(\pr^m)^k$ for some $m$, $k\geq 1$, let $\sE$ be a locally free sheaf on $Y$ 
of rank $m+1$ and let $X:=\pr_Y(\sE)$ with the projection $p\colon X\rightarrow Y$. 
If $X$ is a Fano manifold with $\iota_X\geq m+1$, then $X$ is isomorphic to 
$(\pr^m)^{k+1}$. 
\end{corollary}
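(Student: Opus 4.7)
The plan is to induct on $k$, using Lemma \ref{vb_lift} as the engine. The hypotheses of Corollary \ref{vb_gm} are tailor-made so that at each stage the base $Y = (\pr^m)^k$ can be split off one factor of $\pr^m$ at a time.

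For the base case $k=1$, I would apply Lemma \ref{vb_lift} directly with $Z$ a single point, so that $Y = \pr^m \times Z = \pr^m$. The lemma then produces an integer $a$ and a rank $m+1$ locally free sheaf $\sE_Z$ on the point $Z$, and gives $X \simeq X_Z \times \pr^m$ where $X_Z = \pr_Z(\sE_Z) \simeq \pr^m$. Hence $X \simeq \pr^m \times \pr^m = (\pr^m)^2$, as required.

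For the inductive step, assume the statement for $k-1$ with $k \geq 2$. Write $Y = \pr^m \times Z$ where $Z := (\pr^m)^{k-1}$, and let $p_1, p_2$ be the two projections. Since $X = \pr_Y(\sE)$ is Fano with $\iota_X \geq m+1$, Lemma \ref{vb_lift} applies and furnishes a rank $m+1$ locally free sheaf $\sE_Z$ on $Z$ together with an isomorphism
\[
X \simeq X_Z \times \pr^m, \qquad X_Z := \pr_Z(\sE_Z),
\]
where moreover $X_Z$ is a Fano manifold with $\iota_{X_Z} \geq m+1$. Since $Z = (\pr^m)^{k-1}$, the inductive hypothesis yields $X_Z \simeq (\pr^m)^k$, and therefore $X \simeq (\pr^m)^{k+1}$.

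There is essentially no serious obstacle: all the nontrivial work — deducing that $\sE$ decomposes as a tensor product pulled back from the two factors, and hence that $X$ itself splits as a product — has already been absorbed into Lemma \ref{vb_lift}. The only point requiring any care is to verify that the hypotheses of Lemma \ref{vb_lift} transmit through the induction, i.e.\ that $X_Z$ is still Fano with $\iota_{X_Z} \geq m+1$; but this is exactly the last sentence of Lemma \ref{vb_lift}, so the recursion closes cleanly.
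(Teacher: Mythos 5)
Your proof is correct and follows exactly the paper's intended route: the paper's own proof is the one-line remark that the corollary follows from Lemma \ref{vb_lift} by induction on $k$, and you have simply spelled out that induction, correctly checking at each step that $X_Z$ remains Fano with $\iota_{X_Z}\geq m+1$ so the lemma can be reapplied.
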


\begin{proof}
It is obvious from Lemma \ref{vb_lift} by using induction on $k$. 
\end{proof}

\begin{corollary}\label{vb_agm}
Fix $m$, $k\geq 1$. Let $Y$ be a smooth projective variety, let $\sE$ be a locally free 
sheaf on $Y$ of rank $m+1$ and let $X:=\pr_Y(\sE)$ with the projection 
$p\colon X\rightarrow Y$. Assume that $X$ is a Fano manifold with $\iota_X\geq m+1$. 
\begin{enumerate}
\renewcommand{\theenumi}{\alph{enumi}}
\renewcommand{\labelenumi}{\rm{(\theenumi)}}
\item\label{vb_qp}
If $Y=\Q^{m+1}\times(\pr^m)^{k-1}$, then $X\simeq Y\times\pr^m$. 
\item\label{vb_bp}
If $Y=\pr_{\pr^{m+1}}(\sO^{\oplus m}\oplus\sO(1))
\times(\pr^m)^{k-1}$, 
then $X\simeq Y\times\pr^m$. 
\item\label{vb_tp}
If $Y=\pr_{\pr^{m+1}}(T_{\pr^{m+1}})\times(\pr^m)^{k-1}$,
then $X\simeq Y\times\pr^m$. 
\item\label{vb_pp}
If $Y=\pr^{m+1}\times(\pr^m)^{k-1}$, then $X$ is isomorphic to one of the following: 
\begin{enumerate}
\renewcommand{\theenumii}{\roman{enumii}}
\renewcommand{\labelenumii}{\rm{(\theenumii)}}
\item\label{vb_prod}
$Y\times\pr^m$, 
\item\label{vb_pbp}
$\pr_{\pr^{m+1}}(\sO^{\oplus m}\oplus\sO(1))\times(\pr^m)^{k-1}$, 
\item\label{vb_ptp}
$\pr_{\pr^{m+1}}(T_{\pr^{m+1}})\times(\pr^m)^{k-1}$. 
\end{enumerate}
\end{enumerate}
\end{corollary}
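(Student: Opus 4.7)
The plan is to first reduce to the case $k = 1$ using Lemma \ref{vb_lift}, and then analyze the rank $m+1$ bundle $\sE_0$ on the base $Y_0$ using the minimal horizontal curve formula \eqref{hor_eqn}. For the reduction, in each of (a)--(d) the base $Y$ contains at least one $\pr^m$ factor; writing $Y = \pr^m \times Z$ with $Z$ the product of the remaining factors and applying Lemma \ref{vb_lift}, one splits off a $\pr^m$ factor from $X$. Iterating $k-1$ times yields $X \simeq X_0 \times (\pr^m)^{k-1}$ where $X_0 = \pr_{Y_0}(\sE_0)$ is Fano with $\iota_{X_0} \geq m+1$ and $Y_0$ is the special factor, reducing the problem to classifying $X_0$.

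For $k = 1$, Definition \ref{hor_section} and equation \eqref{hor_eqn} together with $\iota_{X_0} \geq m+1$ yield the key bound: for every rational curve $C \subset Y_0$ with $\nu^*\sE_0 \cong \bigoplus_{i=0}^m \sO_{\pr^1}(a_i)$ where $a_0 \leq \cdots \leq a_m$,
\[
\sum_{i=1}^m (a_i - a_0) \leq (-K_{Y_0}\cdot C) - (m+1).
\]
I will apply this to curves generating $\overline{\NE}(Y_0)$. In case (d), $Y_0 = \pr^{m+1}$ and taking $C$ to be a line yields $\sum(a_i - a_0) \leq 1$, so $\sE_0$ is a uniform bundle of rank $m+1$ on $\pr^{m+1}$ with splitting type $(a,\ldots,a)$ or $(a,\ldots,a,a+1)$ on every line. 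Invoking the classical classification of uniform vector bundles of rank equal to the dimension on a projective space, up to a twist $\sE_0$ is isomorphic to $\sO^{\oplus m+1}$, $\sO^{\oplus m}\oplus\sO(1)$, or $T_{\pr^{m+1}}(-1)$, giving the three possibilities listed in (d).

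For cases (b) and (c), a computation using the relative canonical formula will show that both extremal rays of $\overline{\NE}(Y_0)$ have length $m+1$, so the bound forces $\sum(a_i - a_0) \leq 0$ on curves of either ray: $\sE_0$ restricts to a balanced bundle $\sO(a)^{\oplus m+1}$ on every such curve. In particular $\sE_0$ is trivial up to twist on every $\pr^m$-fiber of the natural projection $q\colon Y_0 \to \pr^{m+1}$, and arguing exactly as in the proof of Lemma \ref{vb_lift}, base change gives $\sE_0 \simeq q^*\sE_0' \otimes \sO_\pr(a_1)$ for some rank $m+1$ bundle $\sE_0'$ on $\pr^{m+1}$. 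Curves of the second extremal ray of $\overline{\NE}(Y_0)$ project via $q$ to every line of $\pr^{m+1}$, and the splitting inequality then yields $\sE_0'|_\ell \simeq \sO(b)^{\oplus m+1}$ for every line $\ell \subset \pr^{m+1}$. By case (d) (or directly by the uniform-trivial bundle theorem on $\pr^n$), $\sE_0'$ is trivial up to twist, hence so is $\sE_0$, and $X_0 \simeq Y_0 \times \pr^m$.

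In case (a), $Y_0 = \Q^{m+1}$ and lines satisfy $(-K_{Y_0}\cdot C) = m+1$, forcing $\sE_0|_l \simeq \sO(a)^{\oplus m+1}$ on every line $l$. For $m = 1$, $\Q^2 \simeq \pr^1 \times \pr^1$ and Lemma \ref{vb_lift} applies directly. For $m \geq 2$, I would invoke the analogous rigidity statement for uniform vector bundles on the quadric (a bundle trivial on every line is trivial up to twist), concluding $X_0 \simeq \Q^{m+1} \times \pr^m$. The main obstacle is case (a) for $m \geq 2$: since $\rho(\Q^{m+1}) = 1$, the "second projection" strategy of cases (b), (c) is unavailable, and one must appeal to a rigidity result for uniform bundles specific to quadrics rather than projective spaces.
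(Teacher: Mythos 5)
Your proposal is correct and follows essentially the same route as the paper: reduce to $k=1$ via Lemma \ref{vb_lift}, then apply the minimal-horizontal-curve inequality \eqref{hor_eqn} on lines in the various extremal rays of $Y_0$. For case (d) the paper likewise concludes that $\sE_0|_l$ has splitting type $(a,\dots,a)$ or $(a,\dots,a,a+1)$ on every line of $\pr^{m+1}$ and then cites \cite[Prop.\ (1.2)]{AW} in the first subcase and Sato's classification \cite[Main Theorem 2) (ii)]{sato} in the second. For (b) and (c) the paper descends $\sE_0$ to a bundle $\sE_1$ on $\pr^{m+1}$ exactly as you do, then invokes (d) to list the three possible shapes of $\sE_1$, and finally rules out the two non-split shapes by a determinant calculation against lines in the fibers of the second contraction; your variant --- showing directly that $\sE_1$ is balanced on every line by observing that lines of the second ray map onto all lines of $\pr^{m+1}$ --- is an equivalent and slightly more streamlined way to reach the same conclusion.

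The ``main obstacle'' you flag in case (a) for $m\geq 2$ is not actually a gap: the paper's tool is again \cite[Proposition (1.2)]{AW}, which is a statement about covering families of rational curves (a vector bundle that is trivial on every member of an unsplit covering family of rational curves which chain-connects the variety is trivial), and this applies directly to $\Q^{m+1}$ covered by its lines when $m\geq 2$. So the same lemma you already use implicitly in case (d) (via Lemma \ref{vb_lift}) handles the quadric; no separate ``quadric-specific'' rigidity theorem is needed. With that reference in place your argument is complete.
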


\begin{proof}
We can assume $k=1$ by Lemma \ref{vb_lift}. 

\eqref{vb_pp} 
Take any line $l\subset Y=\pr^{m+1}$. 
Then the locally free sheaf $\sE|_l$ is either isomorphic to 
\begin{enumerate}
\renewcommand{\theenumi}{\arabic{enumi}}
\renewcommand{\labelenumi}{\rm{(\theenumi)}}
\item\label{vb_triv}
$\sO_{\pr^1}(a)^{\oplus m+1}$ or 
\item\label{vb_sato}
$\sO_{\pr^1}(a)^{\oplus m}\oplus\sO_{\pr^1}(a+1)$
\end{enumerate}
for some $a\in\Z$
by the equation \eqref{hor_eqn} and the properties $(-K_Y\cdot l)=m+2$ and 
$(-K_X\cdot l^{p, 0})\geq m+1$. Moreover, the possibility \eqref{vb_triv} or 
\eqref{vb_sato} and the integer $a$ do not depend on the choice of $l$. 
If the case \eqref{vb_triv} occurs, then $\sE\otimes\sO_{\pr^{m+1}}(-a)\simeq
\sO_{\pr^{m+1}}^{\oplus m+1}$ by \cite[Proposition (1.2)]{AW}. Thus $X$ is isomorphic 
to $\pr^{m+1}\times\pr^m$. 
If the case \eqref{vb_sato} occurs, then 
$\sE$ is isomorphic to either $\sO_{\pr^{m+1}}(a)^{\oplus m}\oplus\sO_{\pr^{m+1}}(a+1)$ 
or $T_{\pr^{m+1}}\otimes\sO_{\pr^{m+1}}(a-1)$ 
by \cite[Main Theorem 2) (ii)]{sato}. Thus $X$ is isomorphic to either 
$\pr_{\pr^{m+1}}(\sO^{\oplus m}\oplus\sO(1))$ or $\pr_{\pr^{m+1}}(T_{\pr^{m+1}})$. 

\eqref{vb_qp} 
If $m=1$, then the assertion is true by Corollary \ref{vb_gm}. 
We can assume that $m\geq 2$. 
Take any line $l\subset Y=\Q^{m+1}$. 
Then we have $\sE|_l\simeq\sO_{\pr^1}(a)^{\oplus m+1}$ for some $a\in\Z$ 
by the equation \eqref{hor_eqn} and the properties $(-K_Y\cdot l)=m+1$ and 
$(-K_X\cdot l^{p, 0})\geq m+1$. Moreover, the integer $a$ does not 
depend on the choice of $l$. 
Then $\sE\otimes\sO_{\Q^{m+1}}(-a)\simeq
\sO_{\Q^{m+1}}^{\oplus m+1}$ by \cite[Proposition (1.2)]{AW}. 
Thus $X$ is isomorphic to $\Q^{m+1}\times\pr^m$. 

\eqref{vb_bp} 
Let $p'\colon Y=\pr_{\pr^{m+1}}(\sO^{\oplus m}\oplus\sO(1))\rightarrow\pr^{m+1}$ 
be the projection and $q\colon Y\rightarrow\pr^{2m+1}$ be the blowing 
up along an $(m-1)$-dimensional linear subspace. 
Take any (closed) point $z\in\pr^{m+1}$ and any line $l\subset (p')^{-1}(z)(\simeq\pr^m)
\subset Y$. 
Then we have $\sE|_l\simeq\sO_{\pr^1}(a)^{\oplus m+1}$ for some $a\in\Z$ 
by the equation \eqref{hor_eqn} and the properties $(-K_Y\cdot l)=m+1$ and 
$(-K_X\cdot l^{p, 0})\geq m+1$. Moreover, the integer $a$ do not 
depend on the choices of $z$ and $l$. 
Then $\sE':=\sE\otimes q^*\sO_{\pr^{2m+1}}(-a)$ satisfies that 
$\sE'|_{(p')^{-1}(z)}\simeq\sO_{\pr^m}^{\oplus m+1}$ 
for any (closed) point $z\in\pr^{m+1}$. 
Thus $\sE_1:=p'_*\sE'$ is a locally free sheaf on $\pr^{m+1}$ of rank $m+1$ and 
$(p')^*\sE_1\simeq\sE'$ holds by the cohomology and base change theorem. 
Hence $\sE_1$ is isomorphic to one of the following 
\begin{enumerate}
\renewcommand{\theenumi}{\arabic{enumi}}
\renewcommand{\labelenumi}{\rm{(\theenumi)}}
\item\label{vb_bp_triv}
$\sO_{\pr^{m+1}}(b)^{\oplus m+1}$, 
\item\label{vb_bp_sato1}
$\sO_{\pr^{m+1}}(b)^{\oplus m}\oplus\sO_{\pr^{m+1}}(b+1)$ or 
\item\label{vb_bp_sato2}
$T_{\pr^{m+1}}\otimes\sO_{\pr^{m+1}}(b-1)$
\end{enumerate}
for some $b\in\Z$ by \eqref{vb_pp}. 
Take a line $l'$ in a nontrivial fiber $(\simeq\pr^{m+1})$ of $q$. 
Then we have $\sE|_{l'}\simeq\sO_{\pr^1}(a')^{\oplus m+1}$ for some $a'\in\Z$ 
by the equation \eqref{hor_eqn} and the properties $(-K_Y\cdot l')=m+1$ and 
$(-K_X\cdot (l')^{p, 0})\geq m+1$. 
Thus we have $(m+1)a'=(\det\sE\cdot l')=(\det\sE_1\cdot p_*l')$. 
If $\sE_1$ is isomorphic to either of type \eqref{vb_bp_sato1} or \eqref{vb_bp_sato2}, 
then $(\det\sE_1\cdot p_*l')=(m+1)b+1$. This leads to a contradiction. Hence 
$\sE_1\simeq\sO_{\pr^{m+1}}(b)^{\oplus m+1}$. 
In particular $X$ is isomorphic to $\pr_{\pr^{m+1}}(\sO^{\oplus m}\oplus\sO(1))
\times\pr^m$. 

\eqref{vb_tp} 
Let $p'\colon Y=\pr_{\pr^{m+1}}(T_{\pr^{m+1}})\rightarrow\pr^{m+1}$ 
be the projection and $q\colon Y\rightarrow\pr^{m+1}$ be the other contraction 
morphism. 
Take any (closed) point $z\in\pr^{m+1}$ and any line $l\subset (p')^{-1}(z)(\simeq\pr^m)
\subset Y$. 
Then we have $\sE|_l\simeq\sO_{\pr^1}(a)^{\oplus m+1}$ for some $a\in\Z$ 
by the equation \eqref{hor_eqn} and the properties $(-K_Y\cdot l)=m+1$ and 
$(-K_X\cdot l^{p, 0})\geq m+1$. Moreover, the integer $a$ do not 
depend on the choices of $z$ and $l$. 
Then $\sE':=\sE\otimes q^*\sO_{\pr^{m+1}}(-a)$ satisfies that 
$\sE'|_{(p')^{-1}(z)}\simeq\sO_{\pr^m}^{\oplus m+1}$ 
for any (closed) point $z\in\pr^{m+1}$. 
Thus $\sE_1:=p'_*\sE'$ is a locally free sheaf on $\pr^{m+1}$ of rank $m+1$ and 
$(p')^*\sE_1\simeq\sE'$ holds by the cohomology and base change theorem. 
Hence $\sE_1$ is isomorphic to one of the following 
\begin{enumerate}
\renewcommand{\theenumi}{\arabic{enumi}}
\renewcommand{\labelenumi}{\rm{(\theenumi)}}
\item\label{vb_tp_triv}
$\sO_{\pr^{m+1}}(b)^{\oplus m+1}$, 
\item\label{vb_tp_sato1}
$\sO_{\pr^{m+1}}(b)^{\oplus m}\oplus\sO_{\pr^{m+1}}(b+1)$ or 
\item\label{vb_tp_sato2}
$T_{\pr^{m+1}}\otimes\sO_{\pr^{m+1}}(b-1)$
\end{enumerate}
for some $b\in\Z$ by \eqref{vb_pp}. 
Take a line $l'$ in a fiber $(\simeq\pr^m)$ of $q$. 
Then we have $\sE|_{l'}\simeq\sO_{\pr^1}(a')^{\oplus m+1}$ for some $a'\in\Z$ 
by the same reason. 
Thus we have $(m+1)a'=(\det\sE\cdot l')=(\det\sE_1\cdot p'_*l')$. 
If $\sE_1$ is isomorphic to either of type \eqref{vb_tp_sato1} or \eqref{vb_tp_sato2}, 
then $(\det\sE_1\cdot p'_*l')=(m+1)b+1$. This leads to a contradiction. Hence 
$\sE_1\simeq\sO_{\pr^{m+1}}(b)^{\oplus m+1}$. 
In particular $X$ is isomorphic to $\pr_{\pr^{m+1}}(T_{\pr^{m+1}})\times\pr^m$. 
\end{proof}

\medskip

\section{Inductive step}\label{rays_section}

\smallskip

In this section, we prove Conjecture $\AGM^n_\rho$ under the conditions that 
Conjectures $\AGM^{n'}_{\rho-1}$ are true for small $n'$ and there exist special 
extremal rays for Fano manifolds satisfying the assumptions of 
Conjecture $\AGM^n_\rho$. 

We recall the result of Wi\'sniewski. 

\begin{thm}[{Wi\'sniewski's inequality \cite{wisn}}]\label{wisn_ineq}
Let $X$ be a smooth projective variety and $R\subset\overline{\NE}(X)$ be 
a $K_X$-negative extremal ray. Then any nontrivial fiber $F$ of $\phi_R$ 
$($the contraction morphism associated to $R$$)$ satisfies the inequality 
\[
\dim F\geq\dim X-\dim\Exc(\phi_R)+l(R)-1.
\]
\end{thm}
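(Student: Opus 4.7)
The plan is to reduce the inequality to the standard dimension estimate for unsplit families of rational curves, namely Proposition \ref{no_p1}, applied to a family of rational curves of minimal degree in $R$.

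First, I would fix a rational curve $C_0$ with $[C_0]\in R$ and $(-K_X\cdot C_0)=l(R)$, and let $V\subset\Rat(X)$ be an irreducible component whose general point parametrizes such a curve. I would check that $V$ is unsplit. Indeed, if some $[C]\in V$ degenerated to a reducible $1$-cycle $C_1+C_2$, both components would be rational curves (rational $1$-cycles of this form have rational irreducible components) and, since $R$ is extremal and $[C]=[C_1]+[C_2]\in R$, each $[C_i]$ would lie in $R$. Then $(-K_X\cdot C_i)$ would be a positive integer $<l(R)$, contradicting the definition of $l(R)$. In particular every $V_x$ is projective, so Proposition \ref{no_p1} applies.

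Next, I would pick any nontrivial fiber $F$ of $\phi_R$ and any point $x\in F$. Every curve parametrized by $V_x$ has numerical class in $R$ and hence is contracted by $\phi_R$; since such a curve passes through $x\in F$, it is contained in $F$. Thus $\Locus(V_x)\subseteq F$, and Proposition \ref{no_p1} gives
\[
\dim F\;\geq\;\dim\Locus(V_x)\;\geq\;\dim X-\dim\Locus(V)+l(R)-1.
\]
To finish, I would observe that $\Locus(V)\subseteq\Exc(\phi_R)$: any point of $\Locus(V)$ lies on a curve contracted by $\phi_R$, so $\phi_R$ fails to be an isomorphism there when $\phi_R$ is birational, while if $\phi_R$ is of fiber type then $\Exc(\phi_R)=X$ by the paper's convention. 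Substituting $\dim\Locus(V)\leq\dim\Exc(\phi_R)$ into the displayed estimate yields the claim.

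There is no real obstacle here: the argument is mostly bookkeeping around the minimality of $l(R)$ and the dimension estimate already recorded as Proposition \ref{no_p1}. The one point requiring care is the uniform treatment of birational and fiber-type contractions when bounding $\dim\Locus(V)$ by $\dim\Exc(\phi_R)$, which is resolved by the conventions in the Notation section.
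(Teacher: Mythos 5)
The paper does not give its own proof of this theorem (it only cites \cite{wisn}), so your argument is assessed on its own terms. The overall strategy — reduce to the dimension estimate of Proposition~\ref{no_p1} applied to a minimal-degree family of rational curves in $R$, then use $\Locus(V)\subseteq\Exc(\phi_R)$ — is indeed the standard route, and your verification that $V$ is unsplit and that $\Locus(V_x)\subseteq F$ is fine. However, there is a genuine gap in the middle step: you apply Proposition~\ref{no_p1} at ``any point $x\in F$,'' but that proposition explicitly requires $x\in\Locus(V)$. Since you have fixed $V$ to be a family of curves of degree exactly $l(R)$, there is no a priori reason that $F\cap\Locus(V)\neq\emptyset$ for an \emph{arbitrary} nontrivial fiber $F$. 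The set $\Locus(V)$ is a closed subset of $\Exc(\phi_R)$ and need not contain every nontrivial fiber; conceivably a special fiber, or even a whole irreducible component of $\Exc(\phi_R)$, carries only rational curves of $-K_X$-degree strictly larger than $l(R)$. If $F\cap\Locus(V)=\emptyset$, then $V_x=\emptyset$ for every $x\in F$ and the estimate says nothing about $\dim F$.

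The standard way to close this gap is to argue pointwise rather than with a single global family. Fix $x\in F$. One first invokes the nontrivial fact that through every point of every nontrivial fiber of a $K_X$-negative extremal contraction there passes a rational curve with class in $R$ (this uses that such fibers are covered by rational curves, e.g.\ via rational chain connectedness of fibers of Mori contractions); this input does not appear anywhere in your proposal and must be made explicit. Among the rational curves in $R$ through $x$, pick one of minimal $-K_X$-degree $d_x\geq l(R)$, and let $V'$ be the component of $\Rat(X)$ containing it. Your degeneration argument shows $V'_x$ is projective, and exactly as you argue, $\Locus(V'_x)\subseteq F$ and $\Locus(V')\subseteq\Exc(\phi_R)$. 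Then Proposition~\ref{no_p1} gives
\[
\dim F \;\geq\; \dim X - \dim\Locus(V') + d_x - 1 \;\geq\; \dim X - \dim\Exc(\phi_R) + l(R) - 1,
\]
using $d_x\geq l(R)$ in the last step. So your plan is correct in spirit, but you must (i) record the existence of a rational curve in $R$ through the chosen point of $F$, and (ii) allow the degree of the family used to exceed $l(R)$, noting that the final inequality only improves.
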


Together with the result of H\"oring and Novelli \cite{HN}, we get the following. 

\begin{thm}\label{bundle_criterion}
Let $X$ be a smooth projective variety and $R\subset\overline{\NE}(X)$ be 
a $K_X$-negative extremal ray. If any fiber $F$ of $\phi_R$ satisfies that 
$\dim F\leq l(R)-1$, then the morphism $\phi_R\colon X\rightarrow X_R$ is 
a $\pr^{l(R)-1}$-fibration. 
\end{thm}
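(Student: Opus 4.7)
The plan is to squeeze the dimension of fibers between Wi\'sniewski's inequality from below and the hypothesis from above, forcing $\phi_R$ to be equidimensional of relative dimension exactly $l(R)-1$, and then to invoke the criterion of H\"oring and Novelli \cite{HN} to upgrade this equidimensional contraction to a $\pr^{l(R)-1}$-fibration.

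First I would dispose of the trivial case: if $\phi_R$ has no nontrivial fiber, then $\phi_R$ is an isomorphism, but such a morphism cannot be the contraction of a $K_X$-negative extremal ray (there must be curves being contracted). Thus there exists at least one nontrivial fiber $F_0$. Theorem~\ref{wisn_ineq} applied to $F_0$ gives
\[
\dim F_0 \geq \dim X - \dim\Exc(\phi_R) + l(R) - 1,
\]
and combining with the hypothesis $\dim F_0\leq l(R)-1$ forces $\dim\Exc(\phi_R)\geq\dim X$, so $\Exc(\phi_R)=X$ and $\phi_R$ is of fiber type. In particular every fiber $F$ is now ``nontrivial'' in the sense required by Wi\'sniewski, so every $F$ satisfies $\dim F\geq l(R)-1$; together with the standing hypothesis this yields $\dim F=l(R)-1$ for every fiber, i.e.\ $\phi_R$ is equidimensional with $\dim X_R=\dim X-l(R)+1$.

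At this stage the theorem reduces to the assertion that an equidimensional $K_X$-negative extremal contraction whose fiber dimension equals $l(R)-1$ is automatically a $\pr^{l(R)-1}$-fibration in the sense of Definition~\ref{pnbdle_dfn}. This is precisely the input I would borrow from \cite{HN}; applying it yields that $X_R$ is smooth, that $\phi_R$ is smooth, and that every scheme-theoretic fiber is isomorphic to $\pr^{l(R)-1}$, which is exactly what is to be proved.

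The elementary dimension-squeezing argument is routine, so the real content lies in the H\"oring--Novelli criterion, which packages together the smoothness of the base, the flatness and smoothness of $\phi_R$, and the identification of each fiber with $\pr^{l(R)-1}$. That step is therefore the only serious ingredient and is used here as a black box.
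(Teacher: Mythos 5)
Your proof is correct and follows essentially the same route as the paper: the paper likewise combines Wi\'sniewski's inequality with the hypothesis to conclude that $\Exc(\phi_R)=X$ and every (necessarily nontrivial) fiber has dimension exactly $l(R)-1$, then cites \cite[Theorem 1.3]{HN} as the final ingredient. Your version merely spells out the dimension-squeezing argument and the nontriviality of at least one fiber, which the paper leaves implicit.
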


\begin{proof}
For any nontrivial fiber $F$ of $\phi_R$, we have $\dim F=l(R)-1$ and 
$\dim\Exc(\phi_R)=\dim X$ by Theorem \ref{wisn_ineq}. 
Thus we can apply \cite[Theorem 1.3]{HN}.
\end{proof}

Using this, we get the key proposition in this section. 

\begin{proposition}\label{induction_prop}
Let $X$ be an $n$-dimensional Fano manifold of the pseudoindex $\iota$. 
Assume that there exists an extremal 
ray $R\subset\NE(X)$ such that any fiber $F$ of $\phi_R$ satisfies that 
$\dim F\leq\iota-1$. 
\begin{enumerate}
\renewcommand{\theenumi}{\arabic{enumi}}
\renewcommand{\labelenumi}{\rm{(\theenumi)}}
\item\label{induction_prop_gm}
If $X$ satisfies the assumptions of Conjecture $\GM^n_\rho$ for some fixed 
$\rho\geq 2$ and Conjecture $\GM^{n+1-\iota}_{\rho-1}$ is true, 
then $X$ is isomorphic to $(\pr^{\iota-1})^{\rho}$. 
\item\label{induction_prop_agm3}
If $X$ satisfies the assumptions of Conjecture $\AGM^n_\rho$ for some fixed 
$\rho\geq 2$ and Conjecture $\AGM^{n+1-\iota}_{\rho-1}$ is true, 
then $X$ is isomorphic to one of 
in the list of Conjecture $\AGM^n_\rho$. 
\end{enumerate}
\end{proposition}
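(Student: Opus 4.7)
The plan is to exhibit $X$ as a projective bundle over a lower-dimensional Fano manifold $Y$ to which the inductive hypothesis applies, and then classify that bundle using the results of Section \ref{vb_section}.

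First I would apply Wi\'sniewski's inequality (Theorem \ref{wisn_ineq}) to $R$. Since $l(R) \geq \iota_X \geq \iota$ and every fiber $F$ of $\phi_R$ satisfies $\dim F \leq \iota - 1$ by hypothesis, the chain
\[
\dim F \geq \dim X - \dim\Exc(\phi_R) + l(R) - 1 \geq l(R) - 1 \geq \iota - 1
\]
forces equality throughout: $l(R) = \iota$, $\Exc(\phi_R) = X$, and every fiber has dimension exactly $\iota - 1$. Theorem \ref{bundle_criterion} then shows that $\phi_R \colon X \to Y := X_R$ is a $\pr^{\iota - 1}$-fibration, and by Lemma \ref{pnbdle_lemma}, $Y$ is a Fano manifold with $\dim Y = n + 1 - \iota$, $\iota_Y \geq \iota$, and $\rho_Y = \rho_X - 1 \geq \rho - 1$.

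Next, a short arithmetic check (multiplying through by $\rho - 1$) shows that the hypothesis $\iota \geq (n + \rho)/\rho$ (resp.\ $\iota \geq (n + \rho - 1)/\rho$) gives
\[
\iota_Y \;\geq\; \iota \;\geq\; \frac{(n + 1 - \iota) + (\rho - 1)}{\rho - 1} \quad \left(\text{resp.\ } \geq \frac{(n + 1 - \iota) + (\rho - 1) - 1}{\rho - 1}\right),
\]
so $Y$ satisfies the hypotheses of $\GM^{n+1-\iota}_{\rho-1}$ (resp.\ $\AGM^{n+1-\iota}_{\rho-1}$). By the inductive hypothesis, $Y$ falls into the corresponding classification: $(\pr^{\iota_Y - 1})^{\rho - 1}$ in case (1), and one of the five types in case (2). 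In every case $Y$ is a rational variety, so Proposition \ref{brauer_prop} identifies $\phi_R$ with a projective bundle: $X \simeq \pr_Y(\sE)$ for some rank-$\iota$ locally free sheaf $\sE$ on $Y$.

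Finally I would classify $\sE$ via Section \ref{vb_section}. For (1), comparing $\dim Y = n + 1 - \iota$ with $(\rho - 1)(\iota_Y - 1)$ together with $\iota \geq (n + \rho)/\rho$ pins down $\iota_Y = \iota$, so $Y \simeq (\pr^{\iota - 1})^{\rho - 1}$, and Corollary \ref{vb_gm} (with $m = \iota - 1$, $k = \rho - 1$) immediately yields $X \simeq (\pr^{\iota - 1})^\rho$. For (2), in each of the five types for $Y$ a dimension count against $\dim Y = n + 1 - \iota$ and $\iota_Y \geq \iota$ forces $\iota_Y = \iota$; then $Y$ takes the exact shape (with $m = \iota - 1$, $k = \rho - 1$) of a base appearing in Corollary \ref{vb_gm} or \ref{vb_agm}, and the conclusions of those corollaries all sit inside the $\AGM^n_\rho$ list. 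Concretely, $Y$ of type (i), (ii), (iii), (iv) gives $X$ of type (i), (ii), (iii), (iv) respectively (by Corollary \ref{vb_gm} or \ref{vb_agm}(a)--(c)), while $Y$ of type (v) splits via Corollary \ref{vb_agm}(d) into the three outputs corresponding to types (iii), (iv), (v) of $\AGM^n_\rho$.

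The hard part will be the last step: verifying in each of the five cases that the dimension constraint $\dim Y = n + 1 - \iota$ together with $\iota_Y \geq \iota$ really forces $\iota_Y = \iota$, so that $Y$'s shape plugs directly into Corollary \ref{vb_agm}, and then checking that the resulting $X$ is among the four (resp.\ five) varieties in the $\AGM^n_\rho$ list without producing any spurious new type. The $\AGM^n_\rho$ list is designed to be closed under forming a Fano $\pr^{\iota-1}$-bundle over its members, which is exactly what makes this inductive argument go through.
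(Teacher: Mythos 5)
Your proposal is correct and follows essentially the same route as the paper: apply Theorem \ref{bundle_criterion} (together with Wi\'sniewski's inequality to pin down $l(R)=\iota$) to exhibit $\phi_R$ as a $\pr^{\iota-1}$-fibration, invoke Lemma \ref{pnbdle_lemma} and the arithmetic comparison to see that $Y$ satisfies the inductive hypothesis, then use rationality of $Y$ with Proposition \ref{brauer_prop} to get a genuine projective bundle and conclude via Corollaries \ref{vb_gm} and \ref{vb_agm}. The extra checks you flag (that $\iota_Y=\iota$ in each case, and that the outputs of Corollary \ref{vb_agm} land back in the $\AGM^n_\rho$ list) are left implicit in the paper but are exactly the verifications one must carry out.
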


\begin{proof}
The morphism $\phi_R\colon X\rightarrow X_R$ is a $\pr^{\iota-1}$-fibration by 
Theorem \ref{bundle_criterion}. We replace $X_R$ by $Y$ for simplicity. 
We know that $Y$ is an 
$(n+1-\iota)$-dimensional Fano manifold with $\rho_Y=\rho_X-1$ and 
$\iota_Y\geq\iota_X$ by Theorem \ref{pnbdle_lemma}. 

\eqref{induction_prop_gm}
We have the inequalities 
\[
\iota_Y\geq\iota\geq(n+\rho)/\rho\geq\left(n+1-\iota+(\rho-1)\right)/(\rho-1). 
\]
Thus $Y$ is isomorphic to $(\pr^{\iota-1})^{\rho-1}$ 
since we assume that Conjecture $\GM^{n+1-\iota}_{\rho-1}$ is true. 
Since $Y$ is rational, the morphism $\phi_R$ is a projective space bundle 
by Proposition \ref{brauer_prop}. 
Therefore $X$ is isomorphic to $(\pr^{\iota-1})^{\rho}$ by Corollary \ref{vb_gm}. 

\eqref{induction_prop_agm3}
We have the inequalities 
\[
\iota_Y\geq\iota\geq(n+\rho-1)/\rho\geq\left(n+1-\iota+(\rho-1)-1\right)/(\rho-1). 
\]
Thus $Y$ is isomorphic to one of $(\pr^{\iota-1})^{\rho-1}$, 
$\Q^\iota\times(\pr^{\iota-1})^{\rho-2}$, $\pr_{\pr^\iota}(\sO^{\oplus \iota-1}\oplus
\sO(1))\times(\pr^{\iota-1})^{\rho-3}$, $\pr_{\pr^\iota}(T_{\pr^\iota})
\times(\pr^{\iota-1})^{\rho-3}$ or 
$\pr^\iota\times(\pr^{\iota-1})^{\rho-2}$ 
since we assume that Conjecture $\AGM^{n+1-\iota}_{\rho-1}$ is true. 
Since $Y$ is rational, the morphism $\phi_R$ is a projective space bundle 
by Proposition \ref{brauer_prop}. 
Therefore $X$ is isomorphic to one of in the list of 
Conjecture $\AGM^n_\rho$ by Corollaries \ref{vb_gm} and \ref{vb_agm}. 
\end{proof}

\medskip

\section{Finding a special extremal ray}\label{domunsplit_section}

\smallskip

In this section, we show that Fano manifolds satisfying the assumptions 
in Conjecture $\AGM^n_\rho$ $(\rho\geq 2)$ have an extremal ray $R\subset\NE(X)$ 
such that any fiber $F$ of $\phi_R$ is of dimension $\leq\iota_X-1$ 
under the assumption that there exist numerically independent 
unsplit and dominating families of rational curves $V^1,\dots,V^{\rho-1}$ on $X$. 
This is a kind of generalization of Wi\'sniewski's result \cite[Lemma 4]{wisn91}. 

\begin{thm}\label{DUI_thm}
Let $X$ be an $n$-dimensional Fano manifold with $\rho:=\rho_X\geq 2$ 
which satisfies that $\iota_X\geq(n+\rho-1)/\rho$. 
Assume that there exist numerically independent 
unsplit and dominating families of rational curves $V^1,\dots,V^{\rho-1}$ on $X$. 
Then there exists an extremal ray $R\subset\NE(X)$ 
such that any fiber $F$ of $\phi_R$ is of dimension $\leq\iota_X-1$. 
\end{thm}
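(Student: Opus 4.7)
The plan is to produce the required extremal ray $R$ by first enriching the setup with an auxiliary unsplit family outside the subspace $N := \sum_{i=1}^{\rho-1}\R[V^i] \subset \NC(X)_{\R}$, then choosing $R$ outside $N$ and controlling fiber dimensions by intersection with loci of the given families. By Theorem \ref{bundle_criterion}, finding such an $R$ with all fibers of dimension $\leq \iota_X - 1$ amounts to producing a $\pr^{\iota_X - 1}$-fibration structure on $X$.

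Since $\rho_X = \rho > \dim N = \rho - 1$, Proposition \ref{no_p3} (combined with the unsplit hypothesis, hence quasi-unsplit) implies that $X$ is not $\rc(\sV^1,\ldots,\sV^{\rho-1})$-connected. Applying Construction \ref{const}, I would take $V^\rho$ to be a minimal horizontal dominating family with respect to the $\rc$-fibration, so that $[V^\rho] \notin N$. Combining Lemma \ref{no_l4} with the numerical hypothesis $\rho(\iota_X - 1) \geq n - 1$ yields $(-K_X \cdot V^\rho) \leq \iota_X + 1$. The hypothesis also forces $\iota_X \geq 2$ (otherwise $n \leq 1$, incompatible with $\rho_X \geq \rho \geq 2$), so $(-K_X \cdot V^\rho) < 2\iota_X$, making $V^\rho$ unsplit.

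Now choose an extremal ray $R \subset \overline{\NE}(X)$ with $[R] \notin N$, which exists since $\dim N < \rho_X$. For any $y \in X$, set $Z_y := \Locus(V^1,\ldots,V^{\rho-1})_y$. Applying Lemma \ref{no_l1} with $Y = \{y\}$ (whose trivial $\NC(Y,X)=0$ makes the hypothesis automatic) gives $\dim Z_y \geq (\rho-1)(\iota_X-1) \geq n - \iota_X$, and iterating Lemma \ref{no_l2} yields $\NC(Z_y, X) \subseteq N$. Since $R \not\subset N$, no curve in $Z_y$ is contracted, so $\phi_R|_{Z_y}$ is finite. Consequently the fiber $F$ of $\phi_R$ through $y$ meets $Z_y$ in a zero-dimensional set, and the intersection inequality $\dim(F \cap Z_y) \geq \dim F + \dim Z_y - n$ forces $\dim F \leq \iota_X$.

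To sharpen this bound to $\dim F \leq \iota_X - 1$, the tight case $\dim F = \iota_X$ must be excluded; this is the main obstacle. In that case all estimates become equalities: $(-K_X \cdot V^i) = \iota_X$ for each $i$, $\dim Z_y = n - \iota_X$, and $n = \rho(\iota_X-1) + 1$. A fiber $F_0$ of dimension $\iota_X$ is covered by an unsplit family $W$ of rational curves in $R$ with $(-K_X \cdot W) = l(R) \leq \iota_X + 1 < 2\iota_X$, and Lemma \ref{no_l1} applied to $V^1,\ldots,V^{\rho-1},W$ gives $\dim \Locus(V^1,\ldots,V^{\rho-1},W)_y \geq \rho(\iota_X-1) = n-1$. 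A case analysis on whether this locus fills $X$ or is a divisor (the latter handled via \cite[Theorem 1.2]{casagrande}), together with a minimal choice of $R$ among extremal rays outside $N$, produces the required contradiction. This delicate last step parallels the cone-theoretic techniques of \cite[Lemma 4]{wisn91} and \cite[Theorem 1.1]{Occ}, and once it is in place Theorem \ref{bundle_criterion} immediately delivers the desired $\pr^{\iota_X-1}$-fibration.
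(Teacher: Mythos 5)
Your first step is essentially the paper's Claim \ref{claim1}: for any extremal ray $R$ outside $N := \sum_{i=1}^{\rho-1}\R[V^i]$, each fiber $F$ of $\phi_R$ meets $\Locus(V^1,\dots,V^{\rho-1})_y$ in a zero-dimensional set, giving $\dim F \leq \iota_X$. That part is fine. The difficulty you identify yourself---ruling out $\dim F = \iota_X$---is indeed where all the work lies, and your sketch of it does not go through.

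The problem is that your proposed contradiction does not materialize. If $\dim\Locus(V^1,\dots,V^{\rho-1},W)_y = n$ (with $W$ the unsplit family covering a large fiber $F_0$), then Lemma \ref{no_l3}\eqref{no_l32} gives $\NC(X) = N + \R[W]$, which has dimension $\rho = \rho_X$: no contradiction at all, just a tautology. If the locus is a divisor, \cite[Theorem 1.2]{casagrande} bounds $\rho_X$ from above by something like $\rho + 1$, which again is not a contradiction. So ``a case analysis on whether this locus fills $X$ or is a divisor'' does not exclude the tight case. A ``minimal choice of $R$ among extremal rays outside $N$'' has no obvious effect either, since the fiber-dimension bound you derived is uniform over such rays. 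The actual proof in the paper proceeds differently: after the analogue of your first step (Claim \ref{claim1}) it establishes Claim \ref{claim2}, which compares fiber dimensions of two \emph{interacting} extremal rays $R, R'$ (one outside $N$), and then runs a case analysis over the structure of the extremal contractions: first, if some ray outside $N$ is divisorial, a ray intersecting its exceptional divisor positively has small fibers; next, if some ray \emph{inside} $N$ is divisorial, a chain of loci built through $\phi_R^{-1}(\phi_R(F'))$ forces small fibers for the third ray; finally, if every contraction is of fiber type, one takes a $(\rho-1)$-dimensional extremal face $S$ containing a ray with a large fiber and shows an extremal ray $R_0$ with $R_0 \cap S = 0$ has all fibers $\leq \iota_X - 1$. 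None of this appears in your sketch, and no simpler replacement is offered. The auxiliary family $V^\rho$ you introduce is also never used in the paper's argument and does not help close the gap.

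So this is a genuine gap: the key half of the theorem (upgrading the bound from $\iota_X$ to $\iota_X - 1$) is asserted rather than proved, and the heuristic you give for it leads nowhere.
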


\begin{proof}
First, we prove the following: 

\begin{claim}\label{claim1}
For any extremal ray $R\subset\NE(X)$ with 
$R\not\subset\sum_{i=1}^{\rho-1}\R[V^i]$, 
the contraction morphism $\phi_R\colon X\rightarrow X_R$ is either 
\begin{enumerate}
\renewcommand{\theenumi}{\roman{enumi}}
\renewcommand{\labelenumi}{\rm{(\theenumi)}}
\item\label{claim11}
a divisorial contraction and any nontrivial fiber is of dimension $\iota_X$, 
or
\item\label{claim12}
of fiber type and any fiber is of dimension $\geq\iota_X-1$. 
\end{enumerate}
\end{claim}

\begin{proof}[Proof of Claim \ref{claim1}]
Take an arbitrary fiber $F$ of $\phi_R$. For any point $x\in F$, 
we have 
\[
\dim\Locus(V^1,\dots,V^{\rho-1})_x\geq\sum_{i=1}^{\rho-1}\left((-K_X
\cdot V^i)-1\right)\geq(\iota_X-1)(\rho-1)
\]
by Lemma \ref{no_l1}. Since $\NC(\Locus(V^1,\dots,V^{\rho-1})_x, X)
=\sum_{i=1}^{\rho-1}\R[V^i]$ (by Lemma \ref{no_l3} \eqref{no_l32}) 
and $\NC(F, X)=\R R$, we have 
$\dim(F\cap\Locus(V^1,\dots,V^{\rho-1})_x)=0$. 
Hence 
\[\dim F\leq n-\dim\Locus(V^1,\dots,V^{\rho-1})_x\leq n-(\iota_X-1)(\rho-1)
\leq\iota_X.
\]
Moreover, we have 
\[
\dim F\geq n-\dim\Exc(\phi_R)+l(R)-1\geq\iota_X-1
\]
by Theorem \ref{wisn_ineq}. 
Hence the assertion follows. 
\end{proof}

Next, we prove the following: 

\begin{claim}\label{claim2}
Take arbitrary distinct extremal rays $R$, $R'\subset\NE(X)$ with 
$R\not\subset\sum_{i=1}^{\rho-1}\R[V^i]$. 
Assume that any fiber $F'$ of $\phi_{R'}$ intersects some fiber $F$ of $\phi_R$. 
Then the morphism $\phi_{R'}$ also satisfies either the property \eqref{claim11} or 
\eqref{claim12} in Claim \ref{claim1}. Moreover, the following holds: 
\begin{enumerate}
\renewcommand{\theenumi}{\arabic{enumi}}
\renewcommand{\labelenumi}{\rm{(\theenumi)}}
\item\label{claim21}
If $\phi_R$ is a divisorial contraction, then $\phi_{R'}$ is of fiber type and any fiber 
of $\phi_{R'}$ is of dimension $\leq\iota_X-1$. 
\item\label{claim22}
If $\phi_{R'}$ is a divisorial contraction, then any fiber 
of $\phi_R$ that intersects some fiber of $\phi_{R'}$ 
is of dimension $\leq\iota_X-1$. 
\end{enumerate}
\end{claim}

\begin{proof}[Proof of Claim \ref{claim2}]
We can assume that $\NC(X)=\R R+\R R'+\sum_{i=1}^{\rho-2}\R[V^i]$ 
by renumbering $V^1,\dots,V^{\rho-1}$. 
Then we have $\NC(\Locus(V^1,\dots, V^{\rho-2})_F, X)=\R R
+\sum_{i=1}^{\rho-2}\R[V^i]$ by Lemma \ref{no_l3} \eqref{no_l32} and 
\begin{eqnarray*}
\dim\Locus(V^1,\dots, V^{\rho-2})_F & \geq & \dim F
+\sum_{i=1}^{\rho-2}\left((-K_X\cdot V^i)-1\right)\\
 & \geq & \dim F+(\rho-2)(\iota_X-1)\geq n-\iota_X
\end{eqnarray*}
holds by Lemma \ref{no_l1} and Claim \ref{claim1}. Moreover, 
if $\phi_R$ is a divisorial contraction, then we have 
$\dim\Locus(V^1,\dots, V^{\rho-2})_F
\geq n+1-\iota_X$ since $\dim F=\iota_X$. 
Since $\NC(F', X)=\R R'$, we have $\dim(F'\cap\Locus(V^1,\dots, V^{\rho-2})_F)=0$. 
Thus $\dim F'\leq n-\dim\Locus(V^1,\dots, V^{\rho-2})_F\leq\iota_X$. 
If $\phi_R$ is a divisorial contraction, then $\dim F'\leq\iota_X-1$. 
Moreover, $\dim F'\geq n-\dim\Exc(\phi_{R'})+l(R')-1\geq\iota_X-1$ holds 
by Theorem \ref{wisn_ineq}. If $\phi_{R'}$ is a divisorial contraction, then $\dim F'\geq\iota_X$. Therefore the assertion follows. 
\end{proof}

Assume that there exists an extremal ray $R\subset\NE(X)$ with 
$R\not\subset\sum_{i=1}^{\rho-1}\R[V^i]$ such that the contraction morphism $\phi_R$ 
is a divisorial contraction. Set $E:=\Exc(\phi_R)$. Then there exists an extremal ray 
$R'\subset\NE(X)$ with $R'\neq R$ such that $(E\cdot R')>0$ since $\NE(X)$ is
spanned by finite number of extremal rays. 
Then any fiber $F'$ of $\phi_{R'}$ intersects $E$. 
Thus $\dim F'\leq\iota_X-1$ by Claim \ref{claim2} \eqref{claim21}. 

Hence we can assume that any extremal ray $R\subset\NE(X)$ with 
$R\not\subset\sum_{i=1}^{\rho-1}\R[V^i]$ satisfies 
that the contraction morphism $\phi_R$ is of fiber type. 
We fix an extremal ray $R\subset\NE(X)$ with 
$R\not\subset\sum_{i=1}^{\rho-1}\R[V^i]$. 
Then any extremal ray $R'\subset\NE(X)$ with $R'\neq R$ 
satisfies either the property \eqref{claim11} or 
\eqref{claim12} in Claim \ref{claim1} by Claim \ref{claim2}. 

Assume that there exists an extremal ray $R'\subset\NE(X)$ with $R'\neq R$ 
such that the contraction morphism $\phi_{R'}$ is a divisorial contraction. 
Set $E':=\Exc(\phi_{R'})$. Then there exists an extremal ray 
$R''\subset\NE(X)$ with $R''\neq R'$ such that $(E'\cdot R'')>0$. 
If $R''\not\subset\sum_{i=1}^{\rho-1}\R[V^i]$, then 
any fiber of the morphism $\phi_{R''}$ has of dimension $\leq\iota_X-1$ 
by Claim \ref{claim2} \eqref{claim22}. Thus we can assume that 
$R''\subset\sum_{i=1}^{\rho-1}\R[V^i]$. In particular, $\rho$ must be bigger than or 
equal to three. 
We can assume that $\NC(X)=\R R+\R R'+\R R''+\sum_{i=1}^{\rho-3}\R[V^i]$ 
by renumbering $V^1,\dots,V^{\rho-1}$
since $R\not\subset\sum_{i=1}^{\rho-1}\R[V^i]$ and two distinct extremal rays 
$R'$ and $R''$ are in $\sum_{i=1}^{\rho-1}\R[V^i]$. 
Take any fiber $F''$ of $\phi_{R''}$. Then we can take a fiber $F'$ of $\phi_{R'}$ 
such that $F'\cap F''\neq\emptyset$ since $(E'\cdot R'')>0$ holds. 
Then $\NC(\phi_R^{-1}(\phi_R(F')), X)=\R R+\R R'$ and 
\begin{eqnarray*}
\dim\phi_R^{-1}(\phi_R(F'))\geq\iota_X-1+\dim\phi_R(F')=\iota_X-1+
\dim F'=2\iota_X-1
\end{eqnarray*}
since any fiber of $\phi_R$ is of dimension $\geq\iota_X-1$ and 
the restriction morphism 
$\phi_R|_{F'}\colon F'\rightarrow\phi_R(F')$ is a finite morphism. 
Moreover, we have 
\begin{eqnarray*}
\NC(\Locus(V^1,\dots,V^{\rho-3})_{\phi_R^{-1}(\phi_R(F'))}, X)
 & = & \R R+\R R'+\sum_{i=1}^{\rho-3}\R[V^i]\\
\dim\Locus(V^1,\dots,V^{\rho-3})_{\phi_R^{-1}(\phi_R(F'))} & \geq & 
\dim\phi_R^{-1}(\phi_R(F'))
+\sum_{i=1}^{\rho-3}\left((-K_X\cdot V^i)-1\right)\\
 & \geq & n+1-\iota_X
\end{eqnarray*}
by Lemmas \ref{no_l1} and \ref{no_l3} \eqref{no_l32}. 
Thus $\dim(F''\cap\Locus(V^1,\dots,V^{\rho-3})_{\phi_R^{-1}(\phi_R(F'))})=0$. 
Therefore $\dim F''\leq n-\dim\Locus(V^1,\dots,V^{\rho-3})_{\phi_R^{-1}(\phi_R(F'))}
\leq\iota_X-1$ for any fiber $F''$ of $\phi_{R''}$. 

Hence we can assume that any extremal ray $R_1\subset\NE(X)$ satisfies that 
the contraction morphism $\phi_{R_1}$ 
is of fiber type. For any fiber $F_1$ of $\phi_{R_1}$, we have 
$\dim F_1\geq\iota_X-1$ by Theorem \ref{wisn_ineq}. 
We can assume that there exists an extremal ray $R_1\subset\NE(X)$ and a fiber $F_1$ 
of $\phi_{R_1}$ such that the dimension of $F_1$ is bigger than or equal to $\iota_X$. 
Take any $(\rho-1)$-dimensional extremal face $S\subset\NE(X)$ such that 
$R_1\subset S$ and let $\phi_S\colon X\rightarrow X_S$ be the contraction morphism 
of $S$. 
Then there exists a fiber $F_S$ of $\phi_S$ such that $\dim F_S\geq n+1-\iota_X$. 
Indeed, let $x_S\in X_S$ be the image of $F_1\subset X$. Then 
$\dim\phi_S^{-1}(x_S)\geq\iota_X+(\rho-2)(\iota_X-1)\geq n+1-\iota_X$. 
We also take an extremal ray $R_0\subset\NE(X)$ that $R_0\cap S=0$. Then for 
any fiber $F_0$ of $\phi_{R_0}$, we have $\dim(F_0\cap F_S)=0$. Therefore 
$\dim F_0\leq n-\dim F_S\leq\iota_X-1$ holds. 

Consequently, we complete the proof of Theorem \ref{DUI_thm}. 
\end{proof}

As a corollary, we get the following result. 

\begin{corollary}\label{rccor}
Let $X$ be an $n$-dimensional Fano manifold satisfying the assumptions 
of Conjecture $\AGM^n_\rho$ for some $\rho\geq 2$. Let $V^1,\dots,V^k$ be 
families of rational curves on $X$ as in Construction \ref{const}. If $V^i$ are unsplit 
for all $1\leq i\leq k$, then there exists an extremal ray $R\subset\NE(X)$ 
such that any fiber $F$ of $\phi_R$ is of dimension $\leq\iota_X-1$. 
\end{corollary}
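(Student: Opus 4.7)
The plan is to identify, among the families $V^1,\ldots,V^k$ produced by Construction \ref{const}, a sub-collection of $\rho_X-1$ numerically independent, unsplit, \emph{dominating} families, and then apply Theorem \ref{DUI_thm}. To begin, I would verify $k=\rho_X$: since each $V^i$ is unsplit (hence quasi-unsplit), the $V^i$ are numerically independent by construction, and $X$ is $\rc(\sV^1,\ldots,\sV^k)$-connected, Proposition \ref{no_p3} forces $\rho_X=k$. The hypothesis $\rho_X\geq\rho$ then yields $k\geq\rho$. Moreover, $\rho\geq 2$ together with $\iota_X\geq(n+\rho-1)/\rho$ forces $n\geq 2$ and $\iota_X\geq 2$, placing us in the setting of Lemma \ref{no_l4}.

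The main step is to extract the dominating property from Lemma \ref{no_l4}. Writing $\delta_i:=n-\dim\Locus(V^i)\geq 0$, which vanishes precisely when $V^i$ is dominating, Lemma \ref{no_l4} becomes
\[
n \;\geq\; \sum_{i=1}^{k}\delta_i \;+\; \sum_{i=1}^{k}\bigl((-K_X\cdot V^i)-1\bigr).
\]
Estimating $(-K_X\cdot V^i)\geq\iota_X$ and using $k\geq\rho$ together with $\iota_X\geq(n+\rho-1)/\rho$ gives
\[
\sum_{i=1}^{k}\delta_i \;\leq\; n-k(\iota_X-1) \;\leq\; n-(n-1) \;=\; 1.
\]
Since each $\delta_i$ is a nonnegative integer, at most one of the $V^i$ fails to be dominating. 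Hence at least $k-1=\rho_X-1$ of them are simultaneously unsplit, dominating, and numerically independent.

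To conclude, I would invoke Theorem \ref{DUI_thm} on this sub-collection of $\rho_X-1$ families, checking first that its numerical hypothesis $\iota_X\geq(n+\rho_X-1)/\rho_X$ is implied by $\iota_X\geq(n+\rho-1)/\rho$ together with $\rho\leq\rho_X$ (the implication reduces to $(\rho_X-\rho)(n-1)\geq 0$). The theorem then supplies an extremal ray $R\subset\NE(X)$ every fiber of whose contraction $\phi_R$ has dimension at most $\iota_X-1$, as required. There is no substantive obstacle: the entire argument amounts to the observation that, under the numerical hypotheses on $\rho$ and $\iota_X$, Lemma \ref{no_l4} is already tight enough to bound the number of non-dominating members of any rc-construction family collection by one.
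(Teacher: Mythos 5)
Your proof is correct, and it actually streamlines the paper's argument. The paper splits into two cases after establishing $k=\rho_X$ via Proposition~\ref{no_p3}: if $k\geq\rho+1$, it combines the bound $n\geq k(\iota_X-1)$ from Lemma~\ref{no_l4} with $\rho(\iota_X-1)\geq n-1$ to force $\iota_X=2$, $n=k=\rho+1$, and all families dominating with anticanonical degree $2$, then invokes Occhetta's characterization to conclude $X\simeq(\pr^1)^n$; if $k=\rho$, it reads off from Lemma~\ref{no_l4} that at least $\rho-1$ of the families are dominating and applies Theorem~\ref{DUI_thm}. You avoid the case split entirely by noting that $k(\iota_X-1)\geq\rho(\iota_X-1)\geq n-1$ already gives $\sum_i\bigl(n-\dim\Locus(V^i)\bigr)\leq 1$ regardless of whether $k=\rho$ or $k>\rho$, so at most one family fails to be dominating, leaving $\rho_X-1$ numerically independent unsplit dominating families in all cases. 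The only extra verification you need, and you supply it, is that the hypothesis $\iota_X\geq(n+\rho_X-1)/\rho_X$ of Theorem~\ref{DUI_thm} follows from $\iota_X\geq(n+\rho-1)/\rho$ because $(n+r-1)/r$ is nonincreasing in $r$ for $n\geq 1$, i.e.\ $(\rho_X-\rho)(n-1)\geq 0$. This buys you a shorter, uniform argument that dispenses with the appeal to Occhetta's theorem, at essentially no cost; the paper's route has the minor side benefit of identifying $X\simeq(\pr^1)^n$ explicitly in the overflow case, but that extra information is not needed for the stated conclusion.
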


\begin{proof}
We know that $k=\rho_X$ by Proposition \ref{no_p3}. 

If $k\geq\rho+1$, then 
$n\geq\sum_{i=1}^k\left(\dim X-\dim\Locus(V^i)+(-K_X\cdot V^i)-1\right)\geq
k(\iota_X-1)\geq(\rho+1)(\iota_X-1)$ 
by Lemma \ref{no_l4}. We note that $\iota_X\geq 2$ and $\iota_X\rho+1-\rho\geq n$. 
Thus $\iota_X=2$, $n=k=\rho+1$ and $V^1,\dots,V^n$ are numerically independent 
dominating and unsplit family of rational curves such that $(-K_X\cdot V^i)=2$ 
for any $1\leq i\leq n$. Therefore $X$ is isomorphic to $(\pr^1)^n$ by 
\cite[Theorem 1.1]{Occ}. 

Hence we can assume that $\rho_X=k=\rho$. Then 
\[
n\geq\sum_{i=1}^k\left(\dim X-\dim\Locus(V^i)+(-K_X\cdot V^i)-1\right)\geq
\rho(\iota_X-1)\geq n-1
\]
by Lemma \ref{no_l4}. Thus at least $\rho-1$ number of families in 
$\{V^1,\dots,V^\rho\}$ are dominating families of rational curves. 
Therefore we can apply Theorem \ref{DUI_thm}. 
\end{proof}

\medskip

\section{Proof of Proposition \ref{mainAGMprop}}\label{five_section}

\smallskip

In this section, we prove Proposition \ref{mainAGMprop}. First, we consider 
Proposition \ref{mainAGMprop} \eqref{mainAGMprop1}. 

\begin{proposition}\label{AGMn2}
Let $X$ be an $n$-dimensional Fano manifold with $\rho_X\geq 2$ and 
$\iota_X\geq(n+1)/2$. Then there exists an extremal ray $R\subset\NE(X)$ such that 
any fiber $F$ of $\phi_R$ satisfies that $\dim F\leq \iota_X-1$. 
\end{proposition}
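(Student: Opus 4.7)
The plan is to deduce Proposition \ref{AGMn2} from Corollary \ref{rccor} by showing that every family produced by Construction \ref{const} is unsplit. Note that the hypotheses $\rho_X\geq 2$ and $\iota_X\geq(n+1)/2$ are precisely the assumptions of Conjecture $\AGM^n_\rho$ with $\rho=2$, and they force $n\geq 2$ and hence $\iota_X\geq 2$; once every $V^i$ in Construction \ref{const} is known to be unsplit, Corollary \ref{rccor} will immediately yield the desired extremal ray.

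First I would apply Construction \ref{const} to obtain numerically independent families $V^1,\dots,V^k$ of rational curves on $X$ such that $X$ is $\rc(\sV^1,\dots,\sV^k)$-connected. Assume for contradiction that some $V^j$ is non-unsplit. Then Lemma \ref{novlem} (which uses $\iota_X\geq 2$) gives
\[
k(\iota_X-1)\leq\dim X-\iota_X\leq\iota_X-1,
\]
the second inequality being equivalent to the standing hypothesis $\iota_X\geq(n+1)/2$. This forces $k=1$, so that $V^1$ itself is non-unsplit and $X$ is $\rc(\sV^1)$-connected.

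Next I would pin down the numerical invariants of $V^1$. Any reducible $1$-cycle in the Chow closure of $V^1$ has at least two rational components, each of $-K_X$-degree at least $\iota_X$, so non-unsplit-ness forces $(-K_X\cdot V^1)\geq 2\iota_X$. On the other hand, $V^1$ is locally unsplit (being a minimal dominating family), so Mori's bound gives $(-K_X\cdot V^1)\leq n+1\leq 2\iota_X$. Therefore $(-K_X\cdot V^1)=2\iota_X$; in particular $(-K_X\cdot V^1)<3\iota_X$ and $(-K_X\cdot V^1)=2\iota_X>\iota_X=\dim X+1-\iota_X$, the final strict inequality following from $\iota_X>(n+1)/3$, which is a consequence of $\iota_X\geq(n+1)/2$.

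All hypotheses of Theorem \ref{no_t2}(1) are thus satisfied for the minimal dominating family $V^1$, and the theorem yields $\rho_X=1$, contradicting $\rho_X\geq 2$. Consequently every $V^i$ in Construction \ref{const} is unsplit, and Corollary \ref{rccor} produces an extremal ray $R\subset\NE(X)$ all of whose fibers have dimension at most $\iota_X-1$. The main delicate point is the numerical squeeze: the bound $\iota_X\geq(n+1)/2$ is just barely strong enough both to force $k=1$ in Lemma \ref{novlem} and to provide the strict inequality $(-K_X\cdot V^1)>\dim X+1-\iota_X$ needed to invoke Theorem \ref{no_t2}(1).
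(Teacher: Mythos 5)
Your proof is correct and follows essentially the same route as the paper: reduce to showing every family from Construction \ref{const} is unsplit, use the numerical squeeze coming from $\iota_X\geq(n+1)/2$ to force $k=1$ and $(-K_X\cdot V^1)=2\iota_X$, and then apply Theorem \ref{no_t2} to the minimal dominating family to obtain $\rho_X=1$, a contradiction. (Incidentally, your appeal to Theorem \ref{no_t2}\eqref{no_t21} is the correct citation here, since the conclusion $\rho_X=1$ requires the minimal-dominating hypothesis; using \ref{novlem} in place of a direct appeal to Lemma \ref{no_l4} is an equivalent packaging of the same count.)
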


\begin{proof}
Take families $V^1,\dots,V^k$ 
of rational curves on $X$ as in Construction \ref{const}. 
It is enough to show that all of $V^1,\dots,V^k$ are unsplit by Corollary \ref{rccor}. 
Assume that there exists a non-unsplit family, say $V^j$. Then 
$(-K_X\cdot V^j)\geq 2\iota_X$. Thus $j=k=1$, $(-K_X\cdot V^1)=2\iota_X$ 
and $n=2\iota_X-1$ by Lemma \ref{no_l4}. However, since $3\iota_X>(-K_X\cdot V^1)
>n+1-\iota_X=\iota_X$, we have $\rho_X=1$ by Theorem \ref{no_t2} \eqref{no_t22}. 
This leads to a contradiction. Therefore all of $V^1,\dots,V^k$ are unsplit families. 
\end{proof}

\begin{proposition}\label{AGMn3}
Let $X$ be an $n$-dimensional Fano manifold with $\rho_X\geq 3$ and 
$\iota_X\geq(n+2)/3$. 
Then there exists an extremal ray $R\subset\NE(X)$ such that 
any fiber $F$ of $\phi_R$ satisfies that $\dim F\leq \iota_X-1$. 
\end{proposition}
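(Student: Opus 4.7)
The plan is to follow the same strategy as in the proof of Proposition \ref{AGMn2}. Take families $V^1,\dots,V^k$ of rational curves on $X$ as in Construction \ref{const}; by Corollary \ref{rccor} it suffices to show that each $V^i$ is unsplit. Suppose for contradiction that some $V^j$ is non-unsplit, so that $(-K_X\cdot V^j)\geq 2\iota_X$. Applying Lemma \ref{no_l4} together with $(-K_X\cdot V^i)\geq\iota_X$ for $i\ne j$ yields $n\geq(2\iota_X-1)+(k-1)(\iota_X-1)=(k+1)\iota_X-k$, and combining this with $\iota_X\geq(n+2)/3$ forces $k\leq 2$.

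If $k=1$, then $X$ is $\rc(\sV^1)$-connected and $V^1$ is a non-unsplit minimal dominating family satisfying $2\iota_X\leq(-K_X\cdot V^1)\leq n+1<3\iota_X$ and $(-K_X\cdot V^1)\geq 2\iota_X>n+1-\iota_X$ (using $\iota_X>(n+1)/3$); Theorem \ref{no_t2}\eqref{no_t21} then gives $\rho_X=1$, contradicting $\rho_X\geq 3$.

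Assume $k=2$. Tracing the equalities back through the chain above shows $n=3\iota_X-2$, both $V^1$ and $V^2$ are dominating on $X$, exactly one of them is non-unsplit of $-K_X$-degree $2\iota_X$, and the other is unsplit of $-K_X$-degree $\iota_X$ (the case where both are non-unsplit is immediately excluded by the same inequality, since it would give $n\geq 4\iota_X-2$). If $V^2$ is the non-unsplit family, then $j=2=(n-\iota_X)/(\iota_X-1)$, so part (a) of Lemma \ref{novlem} gives $\rho_X(\iota_X-1)=n-\iota_X=2\iota_X-2$, i.e.\ $\rho_X=2$, contradicting $\rho_X\geq 3$. If instead $V^1$ is the non-unsplit family, then $V^2$ is a dominating family on $X$ with $(-K_X\cdot V^2)=\iota_X<2\iota_X=(-K_X\cdot V^1)$, which contradicts the choice of $V^1$ as a minimal dominating family.

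The step I expect to require the most care is the equality analysis in the case $k=2$: it must propagate from the coarse inequality of Lemma \ref{no_l4} all the way to the conclusion that $V^2$ is dominating on the whole of $X$, not merely horizontally dominating with respect to the $\rc(\sV^1)$-fibration. This uses $\iota_X\geq(n+2)/3$ in a tight way through the bend-and-break bounds of Proposition \ref{no_p1}, and it is exactly this upgrade that lets the minimality of $V^1$ dispose of the otherwise subtle sub-case $j=1$; without it, one would be forced into a more delicate quasi-unsplit analysis of the reducible members of $V^1$.
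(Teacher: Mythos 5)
Your proof is correct. The overall strategy — taking $V^1,\dots,V^k$ as in Construction \ref{const} and reducing, via Corollary \ref{rccor}, to showing each $V^i$ is unsplit — matches the paper, and your treatment of the case $k=1$ is essentially the paper's (you cite Theorem \ref{no_t2}\eqref{no_t21}, the paper cites \eqref{no_t22}; both yield a contradiction with $\rho_X\geq 3$, and \eqref{no_t21} is actually the cleaner fit since $V^1$ is minimal dominating). Where you genuinely diverge is the case $k=2$. After the equality analysis, the paper argues uniformly: it applies Lemma \ref{no_l1} and Lemma \ref{no_l3}\eqref{no_l32} to $\Locus(V^j,V^i)_x$ for general $x$, obtaining a closed subset of dimension $n$ whose $\NC(\cdot,X)$ is $2$-dimensional, hence $\rho_X\leq 2$, a contradiction. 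You instead split on the index of the non-unsplit family: $j=1$ is killed immediately by the minimality of $V^1$ once you know $V^2$ is an honest dominating family of strictly smaller degree, and $j=2$ is killed by invoking Lemma \ref{novlem}(a) with $j=(\dim X-\iota_X)/(\iota_X-1)=2$ to force $\rho_X=2$. Your $j=1$ observation is a nice shortcut that the paper leaves implicit, but for $j=2$ you reach for the heavier Lemma \ref{novlem}, which the paper's single locus argument avoids entirely. Both routes are valid; the paper's is more self-contained within Section 2--3, while yours recycles the ready-made numerics of Lemma \ref{novlem}.
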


\begin{proof}
Take families $V^1,\dots,V^k$ 
of rational curves on $X$ as in Construction \ref{const}. 
It is enough to show that all of $V^1,\dots,V^k$ are unsplit by Corollary \ref{rccor}. 

Assume that there exists a non-unsplit family, say $V^j$. Then 
$(-K_X\cdot V^j)\geq 2\iota_X$. 
If $k=1$, then we have $3\iota_X>3\iota_X-1\geq n+1\geq(-K_X\cdot V^1)\geq 
2\iota_X\geq n+2-\iota_X>n+1-\iota_X$. Thus $\rho_X=1$ 
by Theorem \ref{no_t2} \eqref{no_t22}. 
This leads to a contradiction. Hence $k\geq 2$. 
By Lemma \ref{no_l4}, we have 
\begin{eqnarray*}
n & \geq & \sum_{i=1}^k\left(n-\dim\Locus(V^i)+(-K_X\cdot V^i)-1\right)\\
 & \geq & (2\iota_X-1)+(k-1)(\iota_X-1)\geq 3\iota_X-2\geq n.
\end{eqnarray*}
Hence we get $k=2$, $n=3\iota_X-2$, both $V^1$ and $V^2$ are dominating families, 
$(-K_X\cdot V^j)=2\iota_X$ and $(-K_X\cdot V^i)=\iota_X$ holds, where 
$\{i$, $j\}=\{1$, $2\}$. 
Thus for general $x\in X$, we have 
\begin{eqnarray*}
\dim\Locus(V^j, V^i)_x & \geq & (-K_X\cdot V^j)-1+(-K_X\cdot V^i)-1=n,\\
\NC(\Locus(V^j, V^i)_x, X) & = & \R[V^j]+\R[V^i]
\end{eqnarray*}
by Lemmas \ref{no_l1} and \ref{no_l3} \eqref{no_l32}. 
Thus $\rho_X=2$. This leads to a contradiction. 
Therefore all of $V^1,\dots,V^k$ are unsplit families. 
\end{proof}

By Propositions \ref{AGMn2}, \ref{AGMn3} and \ref{induction_prop} 
\eqref{induction_prop_agm3}, we have proved Proposition \ref{mainAGMprop}
\eqref{mainAGMprop1}. 

Next, we consider Proposition \ref{mainAGMprop} \eqref{mainAGMprop2}. 
To do this, it is enough to study five-dimensional Fano manifolds $X$ 
with $\iota_X=2$ and 
$\rho_X=4$ by \cite{isk1, isk2, sho, MoMu, ACO, NO}, 
Propositions \ref{AGMn2} and \ref{AGMn3}. 

\begin{proposition}\label{five}
Let $X$ be a five-dimensional Fano manifold with $\iota_X=2$ and $\rho_X=4$. 
Then $X$ is isomorphic to one of $\pr_{\pr^2}(\sO\oplus\sO(1))\times(\pr^1)^2$, 
$\pr_{\pr^2}(T_{\pr^2})\times(\pr^1)^2$ or $\pr^2\times(\pr^1)^3$. 
\end{proposition}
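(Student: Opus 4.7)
The plan is to produce an extremal ray $R\subset\NE(X)$ whose contraction $\phi_R\colon X\rightarrow Y$ has every fiber of dimension at most $\iota_X-1=1$, and then feed this into Proposition \ref{induction_prop} \eqref{induction_prop_agm3}. Once $R$ is found, Theorem \ref{bundle_criterion} makes $\phi_R$ a $\pr^1$-fibration and Theorem \ref{pnbdle_lemma} makes $Y$ a four-dimensional Fano manifold with $\rho_Y=3$ and $\iota_Y\geq 2$. Proposition \ref{mainAGMprop} \eqref{mainAGMprop1} (applied at $n=4$, $\rho=3$) ensures that $\AGM^4_3$ is already available; ruling out $(\pr^1)^3$ by dimension and $\Q^2\times(\pr^1)^2=(\pr^1)^4$ by Picard number, $Y$ must be one of $\pr_{\pr^2}(\sO\oplus\sO(1))\times\pr^1$, $\pr_{\pr^2}(T_{\pr^2})\times\pr^1$, or $\pr^2\times(\pr^1)^2$. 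Each such $Y$ is rational, so Proposition \ref{brauer_prop} turns $\phi_R$ into a Zariski locally trivial $\pr^1$-bundle, and Corollary \ref{vb_agm} \eqref{vb_bp}, \eqref{vb_tp}, \eqref{vb_pp} with $m=1$ yields exactly the three varieties in the statement.

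To produce $R$, I would apply Construction \ref{const} to obtain families $V^1,\dots,V^k$ and show every $V^i$ is unsplit; Corollary \ref{rccor} would then supply $R$. Suppose, for contradiction, that some $V^j$ is non-unsplit, so that $(-K_X\cdot V^j)\geq 2\iota_X=4$. With $\dim X=5$ and $\iota_X=2$, parts (a) and (b) of Lemma \ref{novlem} would force $\rho_X\in\{2,3\}$ whenever $j\in\{2,3\}$, contradicting $\rho_X=4$; hence $j=1$. Lemma \ref{no_l4} combined with divisibility of $(-K_X\cdot V^i)$ by $\iota_X=2$ and with the bound $(-K_X\cdot V^1)\leq\dim X+1=6$ from Mori's theorem then leaves only two configurations: $(-K_X\cdot V^1)=6$ with $k=1$, or $(-K_X\cdot V^1)=4$ with $k\leq 3$.

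In the first configuration, $V^1$ is a locally unsplit dominating family of maximal $(-K_X)$-degree $\dim X+1$, so the characterization of projective space (see \cite{kebekus}) forces $X\simeq\pr^5$, contradicting $\iota_X=2$. When $(-K_X\cdot V^1)=4$ with $k=1$, the manifold is $\rc(\sV^1)$-connected with $(-K_X\cdot V^1)=n+1-\iota_X<3\iota_X$ and $\iota_X\geq 2$, so Theorem \ref{no_t2} \eqref{no_t23} yields $\rho_X\leq 3$, again a contradiction. In the remaining subcases $(-K_X\cdot V^1)=4$ with $k\in\{2,3\}$, I would analyze the reducible cycles in $\sV^1$: each splits as $C^1+C^2$ with $(-K_X\cdot C^a)=2$, and each $C^a$ lies in an unsplit family $W^a$ with $[V^1]=[W^1]+[W^2]$. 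Using Proposition \ref{no_p3} to span $\NC(X)$ by the classes of these components together with $[V^2],\dots,[V^k]$, and the locus bounds of Lemmas \ref{no_l1}, \ref{no_l3}, the aim is to show that these classes cannot span a four-dimensional subspace of $\NC(X)$, contradicting $\rho_X=4$.

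The principal obstacle is this last subcase analysis: the equality $\iota_X=(n+\rho-1)/\rho=2$ prevents Lemma \ref{novlem} from excluding a non-unsplit $V^1$, and one must track how the possibly several splitting types of cycles in $\sV^1$ contribute to $\NC(X)$, and how the resulting unsplit families $W^a$ interact via dimension bounds, before the contradiction with $\rho_X=4$ can be forced. This is precisely the point where the five-dimensional, $\iota_X=2$, $\rho_X=4$ case must be handled separately from the uniform arguments of Propositions \ref{AGMn2} and \ref{AGMn3}.
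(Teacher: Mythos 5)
Your overall framework is exactly the paper's: reduce to showing all families $V^1,\dots,V^k$ of Construction \ref{const} are unsplit, invoke Corollary \ref{rccor} to find an extremal ray $R$ whose fibers have dimension $\leq 1$, and finish with Proposition \ref{induction_prop} \eqref{induction_prop_agm3} (which internally uses Proposition \ref{brauer_prop} and Corollary \ref{vb_agm} as you describe). Your reduction to $j=1$ via Lemma \ref{novlem} is also correct. However, there are two genuine gaps in the remaining case analysis.

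First, the claim that $(-K_X\cdot V^i)$ is divisible by $\iota_X=2$ is false: anticanonical degrees of rational curves are divisible by $r_X$, not by $\iota_X$, and nothing in the hypotheses forces $r_X=2$ (indeed $r_X=1$ is allowed). Consequently your dichotomy ``$(-K_X\cdot V^1)=6$ with $k=1$, or $(-K_X\cdot V^1)=4$ with $k\leq 3$'' silently drops $(-K_X\cdot V^1)=5$. This missing case does admit a clean treatment (for $k=1$, Theorem \ref{no_t2} \eqref{no_t21} gives $\rho_X=1$ since $5>\dim X+1-\iota_X=4$ and $5<3\iota_X$; for $k=2$, Lemma \ref{no_l4} forces $V^2$ dominating of degree $2$, contradicting minimality of $V^1$), but your argument as written does not supply it.

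Second, for $(-K_X\cdot V^1)=4$ with $k\in\{2,3\}$ you only sketch a strategy (decomposing reducible cycles of $\sV^1$ into unsplit subfamilies $W^1,W^2$ and hoping they fail to span $\NC(X)$) and you concede that you have not carried it out. The paper's actual argument is different and more direct. For $k=3$, equality in the chain of inequalities of Lemmas \ref{no_l4} and \ref{novlem} forces $V^2,V^3$ to be \emph{dominating} of degree $2<4=(-K_X\cdot V^1)$, immediately contradicting that $V^1$ is a minimal dominating family. For $k=2$, the same equality forces $\dim\Locus(V^2)=4$ and, for general $x$, $\Locus(V^1_x)=(\pi^1)^{-1}(\pi^1(x))$; since $V^2$ is horizontal dominating with respect to $\pi^1$, one gets $\Locus(V^1,V^2)_x\neq\emptyset$, hence a divisor $D$ with $\NC(D,X)=\R[V^1]+\R[V^2]$ by Lemmas \ref{no_l1} and \ref{no_l3}, and Casagrande's Theorem~1.2 then bounds $\rho_X\leq 3$. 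This use of the divisor-cycle-space bound is the key idea absent from your proposal, and it is what makes the $k=2$ case close. (Your Kebekus route for the $(-K_X\cdot V^1)=6$, $k=1$ case is fine and genuinely different from the paper, which instead uses the dichotomy on $\dim\Locus(V^1_x)$ together with Proposition \ref{no_p1} and Casagrande's theorem; your treatment of $k=1$, degree $4$ via Theorem \ref{no_t2} \eqref{no_t23} matches the paper.)
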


\begin{proof}
Take families $V^1,\dots,V^k$ 
of rational curves on $X$ as in Construction \ref{const}. 
We note that Conjecture $\AGM^4_3$ is true by \cite{MoMu} and 
Proposition \ref{mainAGMprop} \eqref{mainAGMprop1}. 
Thus it is enough to show that all of $V^1,\dots,V^k$ are unsplit by 
Corollary \ref{rccor} 
and Proposition \ref{induction_prop} \eqref{induction_prop_agm3}. 

Assume that $V^j$ is non-unsplit for some $1\leq j\leq k$. 
Such $V^j$ is unique and $k\leq 3$ holds by the inequalities 
\[
5\geq\sum_{i=1}^k\left(5-\dim\Locus(V^i)+(-K_X\cdot V^i)-1\right)
\geq (2\cdot 2-1)+(k-1)(2-1)
\]
in Lemma \ref{no_l4}. 
Moreover, we know that $j=1$ by Lemma \ref{novlem}. 

Assume that $k=3$. Then we have $(-K_X\cdot V^1)=4$, $(-K_X\cdot V^i)=2$ 
and $V^i$ is a dominating family for $i=2$, $3$ by the inequalities 
\[
5\geq\sum_{i=1}^3\left( 
5-\dim\Locus(V^i)+(-K_X\cdot V^i)-1\right)\geq (2\cdot 2-1)+2(2-1)=5
\]
 in Lemma \ref{novlem}. This leads to a contradiction since $V^1$ is 
a minimal dominating family. Thus $k\leq 2$. 

Assume that $k=2$. We repeat the proof in \cite[Theorem 5]{NO}. 
We have either 
$\dim\Locus(V^2)=4$ and $(-K_X\cdot V^1)=4$ and $(-K_X\cdot V^2)=2$, or $\dim\Locus(V^2)=5$ and $(-K_X\cdot V^1)\geq 4>3\geq(-K_X\cdot V^2)$ 
by the inequalities 
\[
5\geq\sum_{i=1}^2\left( 
5-\dim\Locus(V^i)+(-K_X\cdot V^i)-1\right)\geq (2\cdot 2-1)+(2-1)=4
\] 
in Lemma \ref{no_l4}. If $\dim\Locus(V^2)=5$, then this leads to a contradiction 
since $V^1$ is a minimal dominating family. We can assume that 
$\dim\Locus(V^2)=4$. 
For a general $x\in X$, we have $\Locus(V^1_x)=(\pi^1)^{-1}(\pi^1(x))$ 
by Lemma \ref{no_l4}, where $\pi^1\colon X\dashrightarrow Z^1$ is the 
$\rc(\sV^1)$-fibration. 
Thus $\Locus(V^1_x)\cap\Locus(V^2)\neq\emptyset$ 
since $V^2$ is a horizontal dominating family with respect to $\pi^1$. 
Hence we have 
\begin{eqnarray*}
\dim\Locus(V^1,V^2)_x & \geq & 4\\
\NC(\Locus(V^1,V^2)_x, X) & = & \R[V^1]+\R[V^2]
\end{eqnarray*}
by Lemmas \ref{no_l1} and \ref{no_l3} \eqref{no_l32}. 
Therefore $\rho_X\leq 3$ by \cite[Theorem 1.2]{casagrande}. 
This leads to a contradiction. 

Assume that $k=1$. If $\dim\Locus(V^1_x)\geq 4$ 
for a general $x\in X$, then $\rho_X\leq 2$ 
by \cite[Theorem 1.2]{casagrande}. Hence $\dim\Locus(V^1_x)\leq 3$ for 
a general $x\in X$. 
Then $(-K_X\cdot V^1)=4$ by Proposition \ref{no_p1}. 
We have $3\iota_X=6>4=(-K_X\cdot V^1)=\dim X+1-\iota_X$. 
Thus $\rho_X\leq 3$ by Theorem \ref{no_t2} \eqref{no_t23}. 
This leads to a contradiction. 

Therefore we have proved Proposition \ref{five}. 
\end{proof}

As a consequence, we have proved Proposition \ref{mainAGMprop} 
\eqref{mainAGMprop2}.

\medskip

\section{Proof of Theorem \ref{mainthm}}\label{AM_section}

\smallskip

In this section, we prove Theorem \ref{mainthm}. By \cite{KO}, 
\cite[Theorem B]{wisn90}, \cite[Theorem]{wisn91}, \cite[Theorem 3]{NO}, 
\cite[Theorem 5.1]{novelli} and 
Proposition \ref{mainAGMprop} \eqref{mainAGMprop2}, it is enough to show the 
following. 

\begin{thm}\label{AM3_thm}
Set $r\geq 3$. If $X$ is a $(3r-2)$-dimensional Fano manifold with $r_X=r$ and 
$\rho_X=3$, then $X$ is isomorphic to one of $\Q^r\times(\pr^{r-1})^2$, 
$\pr_{\pr^r}(\sO^{\oplus r-1}\oplus\sO(1))\times\pr^{r-1}$ or 
$\pr_{\pr^r}(T_{\pr^r})\times\pr^{r-1}$. 
\end{thm}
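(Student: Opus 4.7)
The plan is to construct a tower of two $\pr^{r-1}$-fibrations $X\to Y\to Z$, identify $Z$ with $\pr^r$ or $\Q^r$, and reconstruct $X$ via the bundle classification of Section~\ref{vb_section}. First, since $r_X=r$ divides $\iota_X\leq\dim X+1=3r-1$, we have $\iota_X\in\{r,2r\}$; the case $\iota_X=2r\geq r+1$ is excluded because Conjecture $\GM^{3r-2}_3$ (known for $\rho\leq 3$) would force $X\simeq(\pr^{2r-1})^3$, whose dimension $6r-3$ differs from $3r-2$. Thus $\iota_X=r$. Taking families $V^1,\dots,V^k$ via Construction~\ref{const}, a case analysis paralleling Propositions~\ref{AGMn2} and \ref{AGMn3} (using Lemma~\ref{no_l4} and Theorem~\ref{no_t2}) shows that all $V^i$ are unsplit: a non-unsplit $V^j$ would have $(-K_X\cdot V^j)=2r$, forcing $k\leq 2$ via Lemma~\ref{no_l4}; the case $k=1$ is killed by Theorem~\ref{no_t2}~\eqref{no_t21}, and $k=2$ is killed because minimality of $V^1$ forces $(-K_X\cdot V^2)=2r$ as well, contradicting Lemma~\ref{no_l4} for $r\geq 3$. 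Proposition~\ref{no_p3} then yields $k=\rho_X=3$, and Lemma~\ref{no_l4} again forces $(-K_X\cdot V^i)=r$ for each $i$.

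Corollary~\ref{rccor} now supplies an extremal ray $R\subset\NE(X)$ every fiber of whose contraction has dimension at most $r-1$. Since $r_X=r$ divides $l(R)$ and $l(R)\leq r$ by Theorem~\ref{wisn_ineq}, $l(R)=r$, so Theorem~\ref{bundle_criterion} exhibits $\phi_R\colon X\to Y$ as a $\pr^{r-1}$-fibration. Thus $Y$ is a smooth Fano of dimension $2r-1$ with $\rho_Y=2$ and $\iota_Y\geq r$ by Lemma~\ref{pnbdle_lemma}; a parity argument via Conjecture~$\GM^{2r-1}_2$ forces $\iota_Y=r$, since $\iota_Y\geq r+1$ would give $Y\simeq(\pr^{\iota_Y-1})^2$ of even dimension. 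Proposition~\ref{AGMn2} applied to $Y$ then yields an extremal ray $R'$ of $Y$ with fibers of dimension at most $r-1$, and Theorem~\ref{bundle_criterion} makes $\phi_{R'}\colon Y\to Z$ a $\pr^{r-1}$-fibration onto a smooth Fano $Z$ of dimension $r$ with $\rho_Z=1$ and $\iota_Z\geq r$.

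The crux is to identify $Z$ as $\pr^r$ or $\Q^r$ without invoking \cite{miyaoka}. My plan is to push the condition $r_X=r$ through the bundle chain: in $\Pic(X)=\pi^*\Pic(Z)\oplus\Z p^*\xi_Y\oplus\Z\xi_X$ (with $\pi=q\circ p\colon X\to Z$), two applications of the relative canonical formula express $-K_X$ in terms of the splitting parameters of $\sE$ and $\sE_Z$. The hypothesis $-K_X\in r\Pic(X)$, together with the formula in Definition~\ref{hor_section} applied to a minimum degree rational curve $C_Z\subset Z$ (whose minimal horizontal lifts $C_Y^{q,0}\subset Y$ and $C_X^{p,0}\subset X$ satisfy $(-K_Y\cdot C_Y^{q,0})\geq\iota_Y=r$ and $(-K_X\cdot C_X^{p,0})\geq\iota_X=r$), forces the splitting of $\sE_Z|_{C_Z}$ to be $\sO(a_0)^{\oplus r}$ and the $H$-degree of $C_Z$ to equal $1$, hence $r_Z=\iota_Z\geq r=\dim Z$. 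Kobayashi-Ochiai \cite{KO} then yields $Z\in\{\pr^r,\Q^r\}$.

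With $Z$ identified, Proposition~\ref{brauer_prop} upgrades $\phi_{R'}$ to a projective bundle $Y=\pr_Z(\sE_Z)$, and Corollary~\ref{vb_agm}~\eqref{vb_qp} (if $Z=\Q^r$) or \eqref{vb_pp} (if $Z=\pr^r$) classifies $Y$ as one of $\Q^r\times\pr^{r-1}$, $\pr^r\times\pr^{r-1}$, $\pr_{\pr^r}(\sO^{\oplus r-1}\oplus\sO(1))$, or $\pr_{\pr^r}(T_{\pr^r})$. A second application of Proposition~\ref{brauer_prop} makes $\phi_R$ a projective bundle $X=\pr_Y(\sE)$, and applying Corollary~\ref{vb_agm} to each of these four $Y$'s produces as candidates for $X$ the three varieties listed in the theorem together with $\pr^r\times(\pr^{r-1})^2$. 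The last has index $\gcd(r+1,r)=1\neq r$, so it is excluded by the hypothesis $r_X=r$, leaving precisely the three listed possibilities. The main obstacle is the identification of $Z$: the route through Miyaoka's theorem is by design forbidden, so the divisibility analysis of $-K$ through the bundle chain must do the heavy lifting.
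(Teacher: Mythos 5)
Your tower construction $X\xrightarrow{\phi_R}Y\xrightarrow{\phi_S}Z$ of $\pr^{r-1}$-fibrations matches the paper's setup, and your alternative derivations of $\iota_X=r$ and $\iota_Y=r$ (via the known cases of $\GM^n_\rho$ rather than \cite[Theorem 3]{NO}) are fine. The gap is in the crux, the identification of $Z$, which is exactly where the paper does its hard work (Claim~\ref{AM3_claim}).

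Your proposed ``divisibility analysis'' does not go through as described. Writing $\det\sE_Z=e_ZH_Z$ and $\det\sE=\alpha\,q^*H_Z+\beta\,\xi_Y$, the relative canonical formula gives $-K_X=(r_Z-e_Z-\alpha)\,\pi^*H_Z+(r-\beta)\,p^*\xi_Y+r\,\xi_X$, so the hypothesis $-K_X\in r\Pic(X)$ is equivalent to $r\mid(r_Z-e_Z-\alpha)$ and $r\mid\beta$. This constrains only the \emph{combination} $r_Z-e_Z-\alpha$, and $e_Z,\alpha,\beta$ are free bundle parameters that can shift to accommodate any $r_Z$. The pseudoindex lower bounds and equation~\eqref{hor_eqn} give you $(-K_X\cdot C_X^{p,0})=r$ and hence $\sum(a_i-a_0)+\sum(b_i-b_0)\le 1$, but this still does not control $e:=(H_Z\cdot C_Z)$; you need $e=1$ (equivalently $r_Z\geq r$), and nothing you have written rules out, say, a hypothetical $Z$ with $\iota_Z=r$, $r_Z=r/e$, $e\ge 2$, together with bundles whose discrepancy parameters satisfy the congruences. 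Ruling out such exotic $Z$ with large pseudoindex but small index is precisely what Miyaoka's theorem $\AGM^r_1$ accomplishes, and avoiding it is the whole point. There is also a circularity: your lattice decomposition $\Pic(X)=\pi^*\Pic(Z)\oplus\Z p^*\xi_Y\oplus\Z\xi_X$ presupposes $Y=\pr_Z(\sE_Z)$, but Proposition~\ref{brauer_prop} requires the \emph{base} $Z$ to be rational, which is part of what you are trying to prove.

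By contrast, the paper's Claim~\ref{AM3_claim} picks a third extremal ray $R''\subset\NE(X)$ with $R''\neq R,R'$, observes that $\pi|_F\colon F\to Z$ is finite for any fiber $F$ of $\phi_{R''}$ so that $\dim F\leq r$, and then runs a three-way case analysis on $\phi_{R''}$ (divisorial with $r$-dimensional fibers / equidimensional fiber type / generically $\pr^{r-1}$-fibration), invoking \cite[Theorem 4.1(iii)]{AW93} and \cite[Theorem 1]{OW}, Kobayashi--Ochiai \cite{KO} and Cho--Sato \cite{cho-sato}, and H\"oring--Novelli \cite{HN} together with very free curves and toric geometry, respectively, to conclude $Z\in\{\pr^r,\Q^r\}$. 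This extra ray and the associated external inputs are what your sketch is missing; without them (or without Miyaoka), the divisibility bookkeeping alone does not determine $Z$. Your final bundle-classification step (Proposition~\ref{brauer_prop} plus Corollary~\ref{vb_agm}, then discarding $\pr^r\times(\pr^{r-1})^2$ because its index is $1$) is correct once $Z$ is known.
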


\begin{proof}
By \cite[Theorem 3]{NO}, we have $\iota_X=r_X=r$. 
By Proposition \ref{AGMn3} and Theorem \ref{bundle_criterion}, there exists 
an extremal ray $R\subset\NE(X)$ such that the associated contraction morphism 
$\phi_R\colon X\to Y$ is a $\pr^{r-1}$-fibration. The variety $Y$ is a 
$(2r-1)$-dimensional Fano manifold with $\iota_Y\geq r$ and $\rho_Y=2$ by Lemma 
\ref{pnbdle_lemma}. By \cite[Theorem 3]{NO}, 
we have $\iota_Y=r$. By Proposition \ref{AGMn2} 
and Theorem \ref{bundle_criterion}, there exists 
an extremal ray $S\subset\NE(Y)$ such that the associated contraction morphism 
$\phi_S\colon Y\to Z$ is a $\pr^{r-1}$-fibration.

\begin{claim}\label{AM3_claim}
The variety $Z$ is isomorphic to either $\pr^r$ or $\Q^r$. 
\end{claim}

\begin{proof}[Proof of Claim \ref{AM3_claim}]
Set $\pi:=\phi_S\circ\phi_R\colon X\to Z$. Let $R'\subset\NE(X)$ be the extremal ray 
such that the morphism $\pi$ corresponds to the extremal face $R+R'\subset\NE(X)$. 
Choose any extremal ray $R''\subset\NE(X)$ with $R''\neq R$, $R'$. 
Then any nontrivial fiber $F$ of $\phi_{R''}\colon X\to X_{R''}$ satisfies that 
$\dim F\leq r$ since $\pi|_F\colon F\to Z$ is a finite morphism.  On the other hand, 
by Theorem \ref{wisn_ineq}, 
\[
\dim F\geq \dim X-\dim\Exc(\phi_{R''})+l(R'')-1. 
\]
Thus $l(R'')=r$ and there are three possibilities: 
\begin{enumerate}
\renewcommand{\theenumi}{\arabic{enumi}}
\renewcommand{\labelenumi}{\rm{(\theenumi)}}
\item\label{AM3_claim1}
$\phi_{R''}$ is a divisorial contraction and any fiber $F$ of $\phi_{R''}$ satisfies that 
$\dim F=r$. 
\item\label{AM3_claim2}
$\phi_{R''}$ is of fiber type and any fiber $F$ of $\phi_{R''}$ satisfies that 
$\dim F=r$. 
\item\label{AM3_claim3}
$\phi_{R''}$ is of fiber type and a general fiber $F$ of $\phi_{R''}$ satisfies that 
$\dim F=r-1$. 
\end{enumerate}

We consider the case \eqref{AM3_claim1}. Then $F\simeq\pr^r$ by 
\cite[Theorem 4.1 (iii)]{AW93}. Thus $Z\simeq\pr^r$ by \cite[Theorem 1]{OW}. 
We consider the case \eqref{AM3_claim2}. Then a general fiber $F$ is isomorphic to 
$\Q^r$ by \cite{KO}. Thus $Z\simeq\pr^r$ or $\Q^r$ by \cite{cho-sato}. 
We consider the case \eqref{AM3_claim3}. Set 
\[
B:=\{x\in X_{R''}\,\,|\,\,\dim\phi_{R''}^{-1}(x)\geq r\}.
\]
Since $\codim_X(\phi^{-1}_{R''}(B))\geq 2$, we can take a general (complete) 
very free rational 
curve $C$ on $X\setminus\phi_{R''}^{-1}(B)$ such that $C'':=\phi_{R''}(C)$ is not 
a point by \cite[Proposition II.3.7, Theorems IV.3.10 and V.2.13]{kollar}. 
By \cite[Theorem 1.3]{HN}, 
$\phi_{R''}^{-1}(x)$ is scheme-theoretically isomorphic to $\pr^{r-1}$ for any $x\in C''$. 
Let $\nu\colon\pr^1\to C''\hookrightarrow X_{R''}$ be the normalization morphism 
and set $T:=X\times_{X_{R''}}\pr^1$ as in 
Definition \ref{hor_section}. Since $T\to \pr^1$ is a $\pr^{r-1}$-fibration, 
$T$ is a toric variety. For any fiber $F''$ of $T\to \pr^1$, the morphism 
$\pi\colon X\to Z$ restricted to the image of $F''$ is a finite morphism. Since $C$ is 
general, the morphism $T\to Z$ is surjective. Therefore $Z\simeq\pr^r$ by 
\cite[Theorem 1]{OW}. 
\end{proof}

By using Proposition \ref{brauer_prop} and Corollary \ref{vb_agm} twice of each, 
we get the possibilities of the structures of $Y$ and $X$. Thus we get the assertion. 
\end{proof}

As a consequence, we have complete the proof of Theorem \ref{mainthm}.

\medskip

\medskip

\noindent K.\ Fujita

Department of Mathematics, Graduate School of Science, 
Kyoto University, Oiwake-cho, 
Kitashirakawa, Sakyo-ku, Kyoto 606-8502, Japan 

fujita@math.kyoto-u.ac.jp
\end{document}